\journalname{Finance and Stochastics}
\providecommand{\abs}[1]{\lvert #1\rvert}
\providecommand{\norm}[1]{\lVert #1\rVert}
\newcommand{\nc}{\newcommand}
\nc{\vb}{\mathbf{v}}
\nc{\bx}{\mathbf{x}}
\nc{\by}{\mathbf{y}}
\nc{\bz}{\mathbf{z}}
\nc{\bu}{\mathbf{u}}
\nc{\bv}{\mathbf{v}}
\nc{\ba}{\mathbf{a}}
\nc{\bs}{\mathbf{s}}
\nc{\bq}{\mathbf{q}}
\nc{\bd}{\mathbf{d}}
\nc{\bb}{\mathbf{b}}
\nc{\bc}{\mathbf{c}}
\nc{\bi}{\mathbf{i}}
\nc{\bfr}{\mathbf{r}}
\nc{\bA}{\mathbf{A}}
\nc{\R}{\mathbb R}
\nc{\N}{\mathbb N}
\nc{\C}{\mathbb C}
\nc{\D}{\mathbb D}
\nc{\Z}{\mathbb Z}
\nc{\F}{\mathbf F}
\nc{\bbS}{\mathbb S}
\nc{\B}{\cal B}
\nc{\br}{\bigr}
\nc{\bl}{\bigl}
\nc{\Bl}{\Bigl}
\nc{\Br}{\Bigr}
\nc{\ind}{\mathbf{1}}
\nc{\bP}{\mathbf{P}}
 \def\JELname{{\bfseries JEL Classification}\enspace}
      \def\JEL#1{\par\addvspace\medskipamount{\rightskip=0pt plus1cm
      \def\and{\ifhmode\unskip\nobreak\fi\ $\cdot$
      }\noindent\JELname\ignorespaces#1\par}}
\begin{document}

\title{On minimizing a portfolio's shortfall probability%\thanks{Grants or other notes
%about the article that should go on the front page should be
%placed here. General acknowledgments should be placed at the end of the article.}
}
\author{Anatolii A. Puhalskii\\         \and\\
Michael Jay Stutzer         %etc.
}

\institute{Anatolii A.  Puhalskii \at
 Institute for Problems in Information
Transmission\\
and University of Colorado\\
              \email{anatolii.puhalskii@ucdenver.edu}           %  \\
             %\emph{Present address:} of F. Author  %  if needed
           \and
Michael Jay Stutzer \at
University of Colorado\\\email{michael.stutzer@Colorado.EDU}
}

\date{Received:\hspace{1.cm}           / Accepted:\hspace{1.cm} }

% The correct dates will be entered by the editor

\sloppy
\maketitle

\begin{abstract}
We obtain a lower  asymptotic bound on the
decay rate of the probability of a portfolio's
underperformance  against a benchmark 
over a large time horizon.
A diffusion model of the asset prices is assumed,
with the mean returns and volatilities 
being represented by possibly nonlinear functions 
  of an economic factor.  The bound is tight, more specifically,
we are able to produce 
     $\epsilon$--asymptotically optimal 
 portfolios.

\keywords{portfolio optimization \and shortfall probability \and large
deviations}

% \PACS{PACS code1 \and PACS code2 \and more}
 \subclass{ 60F10 }
 \JEL{C6}
\end{abstract}

\section{Introduction}
\label{sec:introduction}
As noted by Roll \cite[p.13]{kn:roll2}
`` Today's professional money manager is often judged by total return performance relative to a prespecified benchmark, usually a broadly diversified index of assets.''   He argues that ``This is a sensible approach because the sponsor's most direct alternative to an active manager is an index fund matching the benchmark.'' 
A typical example, of more than just professional interest to academic readers, is the following statement by the TIAA--CREF Trust Company:
\begin{quote}
Different accounts have different benchmarks based on the client's
overall objectives... Accounts for clients who have growth objectives
with an emphasis on equities will be benchmarked heavily toward the
appropriate equity index -- typically the S\&P 500 index -- whereas an
account for a client whose main objective is income and safety of
principal will be measured against a more balanced weighting of the
S\&P 500 and the Lehman Corporate/Government 
Bond Index  \cite[p.3]{kn:tiaa}.
\end{quote} 
         How should plan sponsors and the investors they represent evaluate the performance of a fund like this?  Nobel Laureate William Sharpe \cite[p.32]{kn:sharpe3} asserts that 
\begin{quote}
The key information an investor needs to evaluate a mutual fund is (i) the fund's likely future exposures to movements in major asset classes, (ii) the likely added (or subtracted) return over and above a benchmark with similar exposures, and (iii) the likely risk vis-\'{a}-vis the benchmark.  
\end{quote}
This paper will focus on Sharpe's aforementioned point (iii):  how to
choose a portfolio that minimizes its shortfall risk vis-\'{a}-vis an
exogenous benchmark.   Sensoy \cite[p.26]{kn:sensoy} notes that ``the
vast majority of actively
managed, diversified U.S. equity funds use a S\&P or Russell benchmark index that is defined on size or value/growth dimensions.'' 
Evidence that a fund's investors want to outperform its designated benchmark, i.e. not fall short of doing so,  was also found by Sensoy (op.cit., p. 38), who concluded:

\begin{quote} 
Performance relative to the specified benchmark, especially above the benchmark, is a significant determinant of a fund's subsequent cash inflows, even controlling for performance measures that better capture the fund's style.
\end{quote} 
Because ``mutual funds generally receive a fixed percentage of assets under management as a fee'' [Sensoy (op.cit.,p.33)],  the fees received by mutual fund management increase when the fund outperforms its designated benchmark and decrease when it doesn't.  Hence a fund's managers have strong incentives to minimize the shortfall risk vis--\'{a}--vis the fund's  exogenous benchmark.  
But to study how that objective might be achieved, one first must rigorously define ``the shortfall risk vis--\'{a}--vis the fund's exogenous benchmark''.  One could, for example, fix a specific investment horizon length {\em T}, and search for a portfolio  that minimizes the probability that its value at {\em T} will be less than the value of the designated benchmark at {\em T}. But how should this horizon length be determined?  Does a typical fund investor have a specific horizon length in mind?  Even if an investor does have a specific horizon length in mind, won't a mutual fund wind up with investors with different horizon lengths in mind?   
In light of these unanswered questions, and with an eye toward endowment, pension, or retirement investors who are interested in relatively ``long--run'' returns, we will use large deviation asymptotics to characterize portfolios with minimum feasible ``long--run'' shortfall probabilities.  While this will take the form of a continuous time, dynamic optimization problem over an infinite horizon,  that does not imply that the optimal portfolio will have bad performance over fixed, shorter horizons that may interest other investors.

The problem of optimizing the probability of 
underperformance of a financial portfolio 
 over an infinite time horizon by
using large deviation asymptotics has been studied by a number of
authors.
Discrete time setups were considered in
   Stutzer \cite{stu00,stu03b,Stu03a,stu04}.
Continuous--time models were treated in
Hata,  Nagai,  and Sheu \cite{Hat10} and Nagai \cite{Nag12}.
More specifically, the latter authors
 concern themselves with diffusion models of
asset prices.
 Hata,  Nagai,  and Sheu
\cite{Hat10}  assume that the mean returns and volatilities of
 the security prices 
are affine functions of the economic factor
and that the economic factor is represented by a Gaussian
process,  that the risk--free interest rate does not depend
on the economic factor and that no benchmark is involved.
In Nagai \cite{Nag12}, a nonlinear model is considered with the
risk--free  asset as  the benchmark.
Two kinds of optimal portfolios are obtained in 
Hata,  Nagai,  and Sheu \cite{Hat10} and in Nagai \cite{Nag12}. The first
one is a time--dependent portfolio. At first, one has to choose 
investment horizon $T$ and solve an optimal control problem on
$[0,T]$\,. When horizon $T$ changes, a different optimal control
problem has to be solved. As $T\to\infty$\,, the performances of the
 portfolios approach the optimal value. Understandably, the
 portfolios are referred to in Hata,  Nagai,  and Sheu \cite{Hat10} as
 nearly optimal. The other portfolio is ''stationary'' in the sense
 that it is dependent on the value of the economic factor only and 
 is updated ''in real time''. The underperformance probability
 delivered by that investment strategy
 approaches the optimal value as time goes to infinity. In order for
this other portfolio to be asymptotically optimal more restrictions
have to be placed on the model. The  proof of the optimality of the latter
portfolio in Hata,  Nagai,  and Sheu \cite{Hat10} is omitted. The
proof in  Nagai \cite{Nag12} seems to have a gap, as explained below.

The methods of those papers use duality considerations
and rely on connection with risk sensitive control.
A Hamilton--Jacobi--Bellman equation on a finite time horizon is 
analyzed first in order to find an optimal control,
 and, afterwards,   the length of time is allowed to tend to infinity.
In this paper, we approach the problem
 from a different angle. We study  the model tackled in Nagai
 \cite{Nag12} supplemented with a general benchmark.
(Interestingly enough, the presence of a volatile benchmark lends regularity.) 
By a change of variables,  
the setup is cast as a large deviation problem for 
coupled diffusions with time scale separation.
The economic factor can be assumed to 
''live in fast time'' whereas
 the portfolio price is associated with a process that
 ''lives in slow time''. This insight enables
 us to take advantage of the methods developed
 for such diffusions 
 in Liptser \cite{Lip96} and 
Puhalskii \cite{Puh16}. In particular, 
 the empirical measure of the factor process plays a
pivotal role in our study.
Another novel technical feature  is an extensive use of the
saddle--point theory. 

In a fairly general situation, we obtain an asymptotic lower 
 bound on the scaled by the length of the time period logarithmic probability
 of underperformance. Under additional conditions, 
the bound is shown to be tight in the sense that
 there exist stationary  portfolios that approach the lower bound over
 time. Those portfolios generalize  the stationary
 portfolios in Hata, Nagai and Sheu \cite{Hat10} and
 in Nagai \cite{Nag12}. 
If the  assumptions are relaxed,   ''$\epsilon$--optimal''
portfolios are  available whose performance over time falls short of the optimal
value by an arbitrarily small amount so that another limit needs to be taken.
In a standard fashion,
 one can turn two consecutive limits into one so that an
asymptotically optimal portfolio is obtained too.
We are able to
dispose of a  number of assumptions
  in
 Nagai \cite{Nag12} some of  which  are questionable from the modelling
 perspective, e.g., the requirement that
  the  sum of the squared
 differences of the risk--free interest rate and the security
mean
 return rates be bounded below by a quadratic function of the economic
 factor (see the discussion following Remark \ref{re:nonvolat} for more detail), which condition is characterized as being ''crucial'' in Nagai
 \cite{Nag12}. 
There is another important distinction with the results of Nagai
\cite{Nag12} and Hata,  Nagai,  and Sheu
\cite{Hat10}. Both papers require certain stability conditions which
  involve the coefficients of both  the equations for
the economic factor and the equations for the securities. At the same time,
 the model is set in such a way that the economic factor is not
influenced  by the security prices, so, the stability 
conditions 
are arguably at odds with the model's logic.
We use a different stability condition
which is along similar lines as the one in Fleming and Sheu
\cite{FleShe02} and 
concerns  the properties of the economic factor only.
On the technical side, our proofs appear to be less involved than the
ones in Nagai \cite{Nag12} which  could explain why we are able to
tackle a more general model,  we also allow a non--deterministic initial condition for
the economic factor, whereas it is kept fixed in Hata, Nagai, and Sheu
\cite{Hat10} and in Nagai \cite{Nag12}.

This is how this paper is organized. In Section \ref{sec:model}, the
model  is defined,  the choice of an optimal
portfolio is explained intuitively,
main results are stated, and relation to earlier contributions is
discussed in more detail. Section \ref{sec:prelim}  contains auxiliary
results needed for the proofs and the main results are proved in
Section \ref{sec:proof-bounds}.
\section{A model description and  main results}
\label{sec:model}

We consider a portfolio consisting of $n$ risky securities priced
$S^1_t,\ldots,S^n_t$ at time $t$ and a safe security of price $S^0_t$\,.
We assume that the security prices follow the equations
 \begin{equation}
   \label{eq:83}
     \dfrac{dS^i_t}{S^i_t}= a^i(X_t)\,dt+{b^i(X_t)}^T\,dW_t\,,
\end{equation}
 for $i=1,2,\ldots,n$\,,
and  
\begin{equation*}
\frac{dS^0_t}{S^0_t}=r(X_t)\,dt\,, 
\end{equation*}
where $S^i_0>0$ and
 $X_t$ represents an economic factor.
It is governed by the  equation 
\begin{equation}
  \label{eq:14}
  dX_t=\theta(X_t)\,dt+\sigma(X_t)\,dW_t\,.
\end{equation}
In these equations, the
$a^i(x)$ and $r(x)$ are real--valued functions, the $b^i(x)$ are 
$\R^k$--valued functions,
 $\theta(x)$ is  an $\R^l$--valued function, and
$\sigma(x)$ is an 
$l\times k$--matrix--valued function, all being defined for $x\in\R^l$
and ${}^T$ being used to denote the transpose of a matrix or a
vector\,, and
$W_t$ is a $k$--dimensional standard Wiener process.
Accordingly, the process $X=(X_t\,, t\in\R_+)$ is $l$--dimensional. 
As for the initial condition, 
we will assume that
\begin{equation}
  \label{eq:37}
\mathbf Ee^{\gamma\abs{X_0}^2}<\infty\,,
\end{equation}
 for some $\gamma>0$\,.

Benchmark $Y=(Y_t\,,t\in\R_+)$ follows a  similar equation to
those for the risky securities:
\begin{equation*}
  \dfrac{dY_t}{Y_t}=\alpha(X_t)\,dt+\beta(X_t)^T\,dW_t,
\end{equation*}
with
 $\alpha(x)$ being an $\R$--valued function, $\beta(x)$ being an
$\R^k$--valued function, and $Y_0>0$\,.

 All processes
 are defined on a complete probability space
 $(\Omega,\mathcal{F},\mathbf{P})$\,. 
It is assumed, furthermore, 
that the processes $S^i=(S^i_t\,,t\in\R_+)$\,, $X$\,, and
$Y$ are adapted 
 to 
  filtration $\mathbf{F}=(\mathcal{F}_t\,,t\in\R_+)$ and that
 $W=(W_t\,,t\in\R_+)$ is an   $\mathbf{F}$--Wiener process.
 We note that assuming that the $S^i_t$\,, $X_t$\,, and $Y_t$ are
 driven by
the same Wiener process   is not a loss of generality.
(To illustrate the latter point, suppose that we have one risky
security and one economic factor which are driven by
possibly correlated Wiener
processes, i.e.,
\begin{eqnarray*}
\dfrac{dS_{t}}{S_{t}}= \tilde a(X_t)\,dt+\tilde b(X_t)\,dW_{1,t}  &
\text{ and }&
  dX_t=\tilde\theta(X_t)\,dt+\tilde\sigma(X_t)\,dW_{2,t}\,,
\end{eqnarray*}
where $W_{1,t}$ and $W_{2,t}$ are one--dimensional standard Wiener processes such
that $\mathbf E W_{1,t}W_{2,t}=\rho t$ and 
$\tilde a(x)$\,, $\tilde b(x)$\,, $\tilde \theta(x)$\,, 
and $\tilde\sigma(x)$ are
scalar functions. 
This setup can be cast as (\ref{eq:83}) and (\ref{eq:14}) with
$S^1_t=S_t$\,, $W_t=(W_{1,t},W_{3,t})^T$\,,
$b^1(x)=( \tilde b(x),0)$\,,
and $\sigma(x)=(\rho \, \tilde\sigma(x),
\sqrt{1-\rho^2}\, \tilde\sigma(x))$\,, where 
$W_{3,t}$ represents a  one--dimensional standard Wiener process which
is independent of $W_{1,t}$\,.)

We introduce more 
notation and 
assumptions.
We let $a(x)$ denote the $n$--vector with entries  $a^1(x),\ldots,a^n(x)$,
let $b(x)$ denote the $n\times k$ matrix with rows
${b^1(x)}^T,\ldots,{b^n(x)}^T$ and let
$\mathbf{1}$ denote the $n$--vector with unit entries.  
 The matrices $b(x)b(x)^T$ and $\sigma(x)\sigma(x)^T$ are assumed to be
uniformly positive definite and bounded.
 The functions 
${a(x)}$\,,    ${r(x)}$\,,  ${\theta(x)}$\,,
  $\alpha(x)$\,,  $b(x)$\,, $\sigma(x)$\,,     and $\beta(x)$
are assumed to be continuously differentiable with bounded derivatives
 and the function
 $\sigma(x)\sigma(x)^T$ is assumed to be twice continuously differentiable. 
The function  $\abs{\beta(x)}^2$  is assumed to be  bounded and
 bounded
away from zero. 
(We will also indicate how the results change if the benchmark ''is not
volatile'' meaning  that $\beta(x)=0$\,.)
The  following ''linear growth'' condition is assumed:
for some $K>0$ and all $x\in \R^l$\,,
\begin{equation}
  \label{eq:42}
\abs{a(x)}+\abs{r(x)}+\abs{\alpha(x)}+
    \abs{\theta(x)}\le K(1+\abs{x})\,.
\end{equation}
Under those hypotheses, the processes $S^i$\,, $X$\,, and $Y$ are well
defined, see, e.g., Chapter 5 in Karatzas and Shreve \cite{KarShr88}.

The investor holds $l^i_t$ shares of risky security $i$ and $l^0_t$ shares
 of the safe security at time $t$\,,
 so the total wealth
  is given by
$Z_t=\sum_{i=1}^nl^i_tS^i_t+l^0_tS^0_t$\,.
Portfolio
$  \pi_t=(\pi^1_t,\ldots,\pi^n_t)^T$
specifies the proportions of the total wealth invested in the risky
securities so that, for $i=1,2,\ldots,n$,
$l^i_tS^i_t=\pi^i_t Z_t$\,.
The processes $\pi^i=(\pi^i_t\,,t\in\R_+)$ are assumed to be
$\mathcal{B}\otimes\mathcal{F}_t$--progressively measurable, where $\mathcal{B}$
denotes the Borel $\sigma$--algebra on $\R_+$,
 and such that
$\int_0^t{\pi^i_s}^2\,ds<\infty\, , t\in\R_+\,,$ a.s. We do not impose 
 any other restrictions on the magnitudes of the $\pi^i_t$ so that
unlimited borrowing and shortselling are allowed.
Let
\begin{equation*}
    L^\pi_t=\frac{1}{t}\,\ln\bl(\frac{Z_t}{Y_t}\br)\,.
\end{equation*}
Given $q\in\R$\,, 
the objective is to minimize $\liminf_{t\to\infty}
(1/t)\ln\mathbf{P}(L^\pi_t\le q)$  over all portfolios
$\pi=(\pi_t\,,t\in\R_+)$ and identify
portfolios for which the bound is attained.

Since
the amount of wealth invested in the safe
security is $(1-\sum_{i=1}^n \pi^i_t)Z_t$\,, in a standard fashion
by using the self financing condition, cf. Nagai \cite{Nag12},
 we obtain that
\begin{equation*}
  \dfrac{dZ_t}{Z_t}=\sum_{i=1}^n\pi^i_t\,\dfrac{dS^i_t}{S^i_t}+
\bl(1-\sum_{i=1}^n\pi^i_t\br)\,\dfrac{dS^0_t}{S^0_t}\,.
\end{equation*}

Assuming that $Z_0=Y_0$ and letting $c(x)=b(x)b(x)^T$\,, 
 we have
by It\^o's lemma that, cf.  Nagai \cite{Nag12} and Pham \cite{Pha03},
\begin{multline}
\label{eq:1}
  L^\pi_t=
\frac{1}{t}\,\int_0^t
\bl(\pi_s^T a(X_s)+(1-\pi_s^T \ind)r(X_s)
-\frac{1}{2}\,\pi_s^T c(X_s)\pi_s-\alpha(X_s)
+\frac{1}{2}\,\abs{\beta(X_s)}^2\br)\,ds\\+
\frac{1}{t}\,\int_0^t\bl(b(X_s)^T\pi_s -\beta(X_s)\br)^T
\,dW_s\,.
\end{multline}
The following piece of notation comes in useful.
Given  $z\in\R^d$ and
positive definite  symmetric
 $d\times
d$--matrix $V$\,, we denote
 $\norm{z}^2_{V}=z^TVz$\,.
Let, for $u\in\R^n$ and $x\in\R^l$\,,
\begin{subequations}
  \begin{align}
      \label{eq:4}
  M(u,x)&=u^T (a(x)- r(x)\ind )
-\frac{1}{2}\,\norm{u}^2_{c(x)}+r(x)-\alpha(x)
+\frac{1}{2}\,\abs{\beta(x)}^2\intertext{ and }
  \label{eq:8}
N(u,x)&= b(x)^Tu-\beta(x)\,.
\end{align}
\end{subequations}

A  change of variables   brings equation  (\ref{eq:1}) to the form
 \begin{equation}
  \label{eq:5}
  L^\pi_t=
\int_0^1 M(\pi_{ts},X_{ts})\,ds
+\frac{1}{\sqrt{t}}\,\int_0^1 N(\pi_{ts},X_{ts})^T\,dW_{s}^t\,,
\end{equation}
where 
$W^t_s=W_{ts}/\sqrt{t}$\,.
We note that $W^t=(W^t_s,\,s\in[0,1])$ is a Wiener process relative to
$\mathbf{F}^t=(\mathcal{F}_{ts},\,s\in[0,1])$\,. The righthand side of
\eqref{eq:5}  can
be viewed as a
diffusion process with a small diffusion coefficient which
''lives in slow time'' represented by the variable $s$\,, 
whereas in $X$ and $\pi$ ''time'' is
 accelerated by a factor of $t$\,.
Similar setups have been considered in Liptser \cite{Lip96} and 
Puhalskii \cite{Puh16}. Those papers show that in order to study the
large deviation properties of the ''slow'' process it is convenient to 
work with the pair that comprises the slow process and the empirical
measure of the fast process. For  equations (\ref{eq:14}) and
\eqref{eq:5},
this means working with  the pair $(L^\pi_t,\mu_t)$\,,
where  $\mu_t=(\mu_t(ds,dx))$ represents the empirical process of
$(X_{ts}\,, s\in[0,1])$\,, which is
defined by the relation
\begin{equation*}
  \mu_t([0,s],\Gamma)=\int_0^s\chi_{\Gamma}(X_{t\tilde s})\,d\tilde s\,,
\end{equation*}
with $\Gamma$ representing an arbitrary Borel subset of $\R^l$
and with
 $\chi_{\Gamma}(x)$ 
representing the indicator function of the set $\Gamma$\,.
Letting  $\pi^t_s=\pi_{ts}$ and
$X^t_s=X_{ts}$ in (\ref{eq:5}) obtains that
\begin{multline}
  \label{eq:5a}
  L^\pi_t=
\int_0^1 M(\pi_{s}^t,X^t_s)\,ds
+\frac{1}{\sqrt{t}}\,\int_0^1 N(\pi_{s}^t,X_s^t)^T\,dW_{s}^t\\
=\int_0^1\int_{\R^l} M(\pi_{s}^t,x)\,\mu_t(ds,dx)
+\frac{1}{\sqrt{t}}\,\int_0^1 N(\pi_{s}^t,X_s^t)^T\,dW_{s}^t\,.
\end{multline}
We note that both $X^t=(X^t_s,s\in[0,1])$ and $\pi^t=(\pi^t_s,s\in[0,1])$ are
$\mathbf{F}^t$--adapted.

Since, by \eqref{eq:14} and It\^o's lemma, for 
twice continuously differentiable 
function $f$ on $\R^l$\,, with $\nabla f$ and $\nabla^2 f$ 
denoting the gradient and the Hessian of $f$\,, respectively,
and with  $\text{tr}$ standing for the trace of a square matrix,
\begin{multline}
  \label{eq:60}
    f(X_t)=f(X_0)+\int_0^t \nabla f(X_s)^T\theta(X_s)\,ds+
\frac{1}{2}\,\int_0^t \text{tr}\,\bl(\sigma(X_s)\sigma(X_s)^T\nabla^2
f(X_s)\br) \,ds\\
+\int_0^t \nabla f(X_s)^T\sigma(X_s)\,dW_s
\end{multline}
and since the process 
\begin{multline*}
  \bl(\exp\bl(\int_0^t (-\lambda 
N(\pi_s,X_s)+\sigma(X_s)^T\nabla f(X_s))^T\,dW_s\\
-\frac{1}{2}\,\int_0^t \abs{-\lambda 
N(\pi_s,X_s)+\sigma(X_s)^T\nabla f(X_s)}^2\,ds\br)\,,t\in\R_+\br)
\end{multline*}
 is a local martingale relative to $\mathbf F$\,,
 by  (\ref{eq:5a}) and \eqref{eq:60}, for $\lambda\ge0$\,,
\begin{multline}
  \label{eq:1a}
    \mathbf{E}\exp\Bl(-t\lambda L^{\pi}_t-
t\int_0^1 -\lambda M(\pi^t_s,X^t_s)\,ds+f(X_t)-f(X_0)\\-
t\int_0^1\nabla f(X^t_s)^T\,\theta(X^t_s)\,ds
-\frac{t}{2}\,\int_0^1\text{tr}\,({\sigma(X^t_s)}{\sigma(X^t_s)}^T\,
\nabla^2f(X^t_s))\,ds
\\-\frac{t}{2}\,\int_0^1\abs{-\lambda 
N(\pi^t_s,X^t_s)+\sigma(X^t_s)^T\nabla f(X^t_s)}^2\,ds\Br)\le 1\,.
\end{multline}
Intuitively, if we assume that equality prevails in \eqref{eq:1a}, 
which would be the case under certain integrability conditions on
$\pi_s$ and $\nabla f(X_s)$\,, then
 $L^\pi_t$ is ''maximized'' 
by minimizing  the integrals over $\pi^t_s$, i.e.,
by choosing 
$\pi^t_s=u(X^t_s)$ with $u(x)$ attaining 
$\inf_{u\in\R^n}\bl( -\lambda M(u,x)+
\abs{-\lambda 
N(u,x)+\sigma(x)^T\nabla f(x)}^2/2\bl)$\,. For that portfolio,
\begin{multline*}
      \mathbf{E}\chi_{\{L^{\pi}_t\le q\}}\exp\Bl(
-t\lambda q+f(X_t)-f(X_0)\\-
t\int_0^1\int_{\R^l}\bl(\inf_{u\in\R^n}\bl( -\lambda M(u,x)+
\frac{1}{2}\,\abs{-\lambda 
N(u,x)+\sigma(x)^T\nabla f(x)}^2\br)\\
+\nabla f(x)^T\,\theta(x)+
\frac{1}{2}\,\text{tr}\,({\sigma(x)}{\sigma(x)}^T\,
\nabla^2f(x)\br)\mu_t(ds,dx)
\Br)\le 1\,.
\end{multline*}
Consequently,
\begin{multline}
  \label{eq:26}
  \frac{1}{t}\,\ln      \mathbf{E}\chi_{\{L^{\pi}_t\le q\}}e^{f(X_t)-f(X_0)}
\le \lambda q+
\sup_{\nu\in\mathcal{P}}\int_{\R^l}\bl(\inf_{u\in\R^n}\bl( -\lambda M(u,x)\\+
\frac{1}{2}\,\abs{-\lambda 
N(u,x)+\sigma(x)^T\nabla f(x)}^2\br)
+\nabla f(x)^T\,\theta(x)\\+
\frac{1}{2}\,\text{tr}\,({\sigma(x)}{\sigma(x)}^T\,
\nabla^2f(x))\br)\nu(dx)\,.
\end{multline}
The ''best'' upper bound on the normalized logarithmic shortfall probability
$(1/t)\ln      \mathbf{P}(L^{\pi}_t\le q)$ is
obtained by minimizing  the righthand side over
$\lambda\ge0$ and a suitable collection
of functions $f$ so that  an optimal portfolio should be
apparently associated with
$\lambda$ and $f$ that
minimize the righthand side of \eqref{eq:26}. The main results of the paper bear out
that intuition. Furthermore, we show that the upper bound for that
particular portfolio choice furnishes a lower bound for all portfolios.

Before we state the main results, more conditions are in order.
We assume the stability condition that
there exist  bounded Borel--measurable 
function $\Phi(x)$ with values in the set of $l\times n$--matrices,
where $x\in\R^l$\,,
and  positive definite symmetric $l\times
l$--matrix $\Psi$ such that
\begin{equation}
  \label{eq:45}
  \limsup_{\abs{x}\to\infty}\,
\bl(\theta(x)-\Phi(x)(a(x)-r(x)\mathbf1
)\br)^T\,\frac{\Psi x}{\abs{x}^2}<0\,.
\end{equation}
If $\Phi(x)=0$ and $\Psi$ is the $l\times l$-identity matrix, then
one 
recovers Has'minskii's drift condition.

The following nondegeneracy
condition is also needed. Let $I_k$ denote the $k\times
  k$--identity matrix and let $Q_1(x)=I_k-b(x)^Tc(x)^{-1}b(x)$\,.
The matrix $Q_1(x)$ represents the orthogonal 
projection operator onto the null space      of
$b(x)$ in $\R^k$\,.
 We will assume that
\begin{itemize}
\item[(N)]
  \begin{enumerate}
  \item 
 The matrix $\sigma(x)Q_1(x)\sigma(x)^T$ is uniformly
  positive definite.
\item The quantity
  $
  \beta(x)^TQ_2(x)\beta(x)
  $ is bounded away from zero,
where\\
\begin{equation}
  \label{eq:81}
Q_2(x)=Q_1(x)
\bl(I_k-\sigma(x)^T(\sigma(x)Q_1(x)\sigma(x)^T)^{-1}\sigma(x)\br)
Q_1(x)\,.
\end{equation}

  \end{enumerate}
\end{itemize}
This condition admits the following geometric interpretation.
The matrix $\sigma(x)Q_1(x)\sigma(x)^T$ 
is uniformly positive definite 
 if and only if
the ranges of $\sigma(x)^T$ and  $b(x)^T$ do not have common
nontrivial subspaces and, in addition, arbitrary nonzero
vectors from those respective ranges are
 at angles bounded away from zero,
if and only if 
the matrix
$c(x)-b(x)\sigma(x)^T(\sigma(x)\sigma(x)^T)^{-1}\sigma(x)  b(x)^T$
is uniformly positive definite.
Also,      $\beta(x)^TQ_2(x)\beta(x)$  is bounded away from zero
if and only if the projection of $\beta(x)$ onto the null space
 of $b(x)$ is of  length bounded away from zero 
and is at angles bounded away from zero to all nonzero vectors from
the projection of the range of $\sigma(x)^T$ onto that null space.
Under part 1 of condition (N), we have that $k\ge n+l$
and the rows of the matrices $\sigma(x)$ and $b(x)$ are linearly
independent. 
Part 2 of condition (N) implies that $\beta(x)$ does not belong to the
sum of the ranges of $\sigma(x)^T$ and $b(x)^T$   so that $k>n+l$\,.
Part 1 of condition (N) is essential for the developments in this paper
while part 2 may be disposed of at the expense of certain additional
assumptions.

Let
 $\mathcal{P}$ represent the set of probability measures $\nu$ on $\R^l$
such that $\int_{\R^l}\abs{x}^2\,\nu(dx)<\infty$\,.
Let  $\mathbb{P}$ represent the set of probability densities 
$m=(m(x)\,,x\in\R^l)$
 such that $\int_{\R^l}\abs{x}^2\,m(x)\,dx<\infty$\,.
 Let
  $\mathbb{C}^2$,  $\mathbb{C}^2_0$,
 $\mathbb{C}^1$, and $\mathbb{C}^1_\ell$  represent the set of real--valued 
twice continuously differentiable
 functions on $\R^l$,
the set of real--valued 
compactly supported 
twice continuously differentiable
 functions on $\R^l$\,, 
the set of real--valued 
 continuously differentiable
 functions on $\R^l$,
and the set of real--valued 
 continuously differentiable
 functions on $\R^l$ whose gradients satisfy the linear growth
 condition,  respectively.
For  $\mathbb C_0^2$--function $f$\,,
density $m\in\mathbb
  P$ and $\lambda\ge0$\,, we let
\begin{multline}
  \label{eq:62}
  G(\lambda,f,m)=\int_{\R^l}\Bl( 
-\lambda\sup_{u\in\R^n}\bl(  M(u,x)
- \frac{1}{2}\,\lambda
\abs{N(u,x)}^2+ \nabla f(x)^T\sigma(x) N(u,x)\br)\\+
\nabla f(x)^T\,\theta(x)+
\frac{1}{2}\,\abs{{\sigma(x)}^T\nabla f(x)}^2
+\frac{1}{2}\, \text{tr}\,\bl({\sigma(x)}{\sigma(x)}^T\nabla^2 f(x)\br)\,
\Br)\,m(x)\,dx\,.
\end{multline}
Let us define
\begin{equation}
  \label{eq:29}
  F(\lambda)=
 \sup_{m\in \mathbb
  P}\inf_{f\in\mathbb{C}_0^2}G(\lambda,f,m)\,.
\end{equation}
One can see that  $F(0)\le0$ and, moreover, $F(0)=0$ provided condition
\eqref{eq:45} holds with $\Phi(x)=0$\,. 
Let  \begin{align}
  \label{eq:36}
   J^{\text{s}}_q=&  \sup_{\lambda\ge0}(-\lambda q-F(\lambda))\,,
\end{align}
the latter quantity being nonnegative by $F(0)$ being nonpositive and
superscript ''s'' standing for ''shortfall''.
  We  show in Lemma \ref{le:conc} and Lemma \ref{le:minmax} below that 
the
 function $F(\lambda)$ is strictly  convex,
is continuously
 differentiable for $\lambda>0$\,, and converges to $\infty$  superlinearly 
as $\lambda\to\infty$\,, so  
the supremum in  \eqref{eq:36}
 is attained, at  unique $\hat\lambda$\,.
Furthermore, 
by Lemma \ref{le:saddle_3} below, if either $\hat\lambda>0$ or condition
\eqref{eq:45} holds with $\Phi(x)=0$\,, then the function $
\lambda q+ G(\lambda, f,m)$\,,
being  convex  in $(\lambda,f)$ and 
concave  in $m$\,, has      saddle
point
$((\hat\lambda,\hat f),\hat m)$ in $\R_+\times 
(\mathbb C^1_\ell\cap \mathbb C^2)\times
\mathbb P$\,, with 
$\hat\lambda$\,, $\nabla \hat f$\,, and $\hat m$ being specified
uniquely.
In addition, the following equations are satisfied:
\begin{subequations}
  \begin{multline}
  \label{eq:103'}
-\hat\lambda\bl( M(\hat u(x),x)
-\frac{1}{2}\,\hat\lambda
\abs{N(\hat u(x),x)}^2+\nabla\hat f(x)^T\sigma(x) N(\hat u(x),x)\br)
+\nabla  \hat f(x)^T\theta(x)\\
+\frac{1}{2}\,\abs{\sigma(x)^T\nabla\hat  f(x)}^2
+\frac{1}{2}\, \text{tr}\,(\sigma(x)\sigma(x)^T
\,\nabla^2  \hat f(x))=F(\hat\lambda)
\end{multline}
and \begin{multline}
  \label{eq:104'}
                 \int_{\R^l}
\bl(\nabla  h(x)^T(-\hat\lambda \sigma(x) N(\hat u(x),x)+\theta(x)
+\sigma(x)\sigma(x)^T\nabla\hat  f(x))
\\+\frac{1}{2}\, \text{tr}\,(\sigma(x)\sigma(x)^T
\,\nabla^2  h(x))
\br)\, \hat m(x)\,dx=0\,,
\end{multline}
\end{subequations}
with $\hat u(x)$ being given by the $u$ that attains supremum in
\eqref{eq:62} for $\lambda=\hat\lambda$ and $f=\hat f$
so that
\begin{equation}
  \label{eq:69}
        \hat u(x)=\frac{1}{1+\hat\lambda}\,c(x)^{-1}\bl(a(x)-r(x)\mathbf1
+\hat\lambda b(x)\beta(x)+b(x)\sigma(x)^T\nabla\hat
  f(x)\br)
\end{equation}
   and with \eqref{eq:103'} and \eqref{eq:104'} holding
for all $x\in\R^l$ and for all $h\in\mathbb C_0^2$\,, respectively.
Also, $\hat m$ can be assumed to be bounded, positive 
 and continuously differentiable.
In effect, \eqref{eq:103'} and \eqref{eq:104'} represent
Euler--Lagrange equations for $G(\hat\lambda,f,m)$
at $(\hat f,\hat m)$\,.
Equation \eqref{eq:103'} is known as an ergodic Bellman equation,
see Fleming and Sheu \cite{FleShe02}, Kaise and Sheu \cite{KaiShe06},
Hata, Nagai, and Sheu \cite{Hat10}, and 
equation \eqref{eq:104'} signifies that $\hat m$ is the
invariant density of a certain diffusion.
Kaise and Sheu \cite{KaiShe06}, see also Ichihara \cite{Ich11},
 develop an elegant theory of the
ergodic Bellman equation which is essential for our study.
One hopes that 
 the portfolio $\hat\pi=(\hat \pi_t\,,t\in\R_+)$
such that $\hat\pi_t=\hat u(X_t)$ is asymptotically optimal.

\begin{theorem}
  \label{the:bounds}
Let us suppose that either  $\hat\lambda>0$ or condition
\eqref{eq:45} holds with $\Phi(x)=0$\,. Then,
for arbitrary portfolio $\pi=(\pi_t\,,t\in\R_+)$\,,
\begin{equation*}
    \liminf_{t\to\infty}\frac{1}{t}\ln
\mathbf{P}(L^\pi_t< q)\ge -J^{\text{s}}_q\,.
\end{equation*}
\end{theorem}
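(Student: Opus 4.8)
\emph{Strategy.} The plan is to prove, for every $q'<q$, the bound $\liminf_{t\to\infty}\tfrac1t\ln\mathbf P(L^\pi_t<q)\ge -J^{\text s}_{q'}$ by an exponential change of measure, and then to let $q'\uparrow q$, using that $q\mapsto J^{\text s}_q$ is convex, finite and nonincreasing, hence continuous. Fix $q'<q$ and denote by $\hat\lambda'$, $\hat f'$, $\hat m'$, $\hat u'(x)$, $\hat v'(x)=\sigma(x)^T\nabla\hat f'(x)-\hat\lambda'N(\hat u'(x),x)$ the saddle--point data of Lemma~\ref{le:saddle_3} with $q'$ in place of $q$; since the maximiser in \eqref{eq:36} is nonincreasing in the threshold, $\hat\lambda'>0$ whenever $\hat\lambda>0$, and if $\hat\lambda=0$ then by hypothesis \eqref{eq:45} holds with $\Phi=0$, in which case the sub-case $\hat\lambda'=0$ has $\hat v'=0$, $J^{\text s}_{q'}=0$, and all steps below apply verbatim with $\mathbf Q'=\mathbf P$. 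From \eqref{eq:103'}, \eqref{eq:104'}, \eqref{eq:69} and the first--order condition $F'(\hat\lambda')=-q'$ for \eqref{eq:36} one reads off that $\hat u'(x)$ maximises $u\mapsto M(u,x)+N(u,x)^T\hat v'(x)$, that $\hat m'$ is the invariant density of the diffusion with drift $\theta+\sigma\hat v'$, and --- writing $\overline M'(x)$ for that maximum --- that $\int_{\R^l}\overline M'\hat m'\,dx=q'$ ($\le q'$ in the degenerate sub-case) and $\tfrac12\int_{\R^l}\abs{\hat v'}^2\hat m'\,dx=J^{\text s}_{q'}$.

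\emph{The change of measure.} Introduce $\mathbf Q'$ on $\mathcal F_t$ via $d\mathbf Q'/d\mathbf P=\exp\bl(\int_0^t\hat v'(X_s)^T\,dW_s-\tfrac12\int_0^t\abs{\hat v'(X_s)}^2\,ds\br)$ --- a genuine probability change because the tilted state equation for $X$ has linear growth, hence no explosion. Under $\mathbf Q'$ the factor $X$ is an ergodic diffusion with invariant density $\hat m'$ (this is \eqref{eq:104'} together with the stability hypothesis \eqref{eq:45} for the tilted drift), $\mu_t\Rightarrow\hat m'$ $\mathbf Q'$--a.s., all moments of $\mu_t$ are bounded uniformly in $t$ thanks to \eqref{eq:37}, and the usual diffusion central limit theorem for additive functionals of $X$ is available. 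Substituting $dW^t_s=\sqrt t\,\hat v'(X^t_s)\,ds+d\tilde W^t_s$ into \eqref{eq:5a} and using $M(u,x)+N(u,x)^T\hat v'(x)=\overline M'(x)-\tfrac12\norm{u-\hat u'(x)}^2_{c(x)}$ gives
\begin{equation*}
  L^\pi_t-q'=\Bl(\int_{\R^l}\overline M'\,\mu_t(dx)-q'\Br)-\frac12\int_0^1\norm{\pi^t_s-\hat u'(X^t_s)}^2_{c(X^t_s)}\,ds+\frac1{\sqrt t}\int_0^1 N(\pi^t_s,X^t_s)^T\,d\tilde W^t_s .
\end{equation*}

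\emph{Making the shortfall event likely.} The first bracket equals $\int_{\R^l}\overline M'(\mu_t-\hat m')(dx)$, which is $O_{\mathbf Q'}(t^{-1/2})$ and does not involve $\pi$. For the remaining two terms, $\exp\bl(\sqrt t\,\hat M^t_u-\tfrac t2\langle\hat M^t\rangle_u\br)$, with $\hat M^t_u=t^{-1/2}\int_0^u N(\pi^t_s,X^t_s)^T\,d\tilde W^t_s$, is a nonnegative supermartingale, so Markov's inequality together with $\abs{N(\pi^t_s,X^t_s)}^2\le 2\abs{N(\hat u'(X^t_s),X^t_s)}^2+2\norm{\pi^t_s-\hat u'(X^t_s)}^2_{c(X^t_s)}$ shows that, for $t\ge4$ and $x>0$, with $\mathbf Q'$--probability at least $1-e^{-x}$ and \emph{uniformly over} $\pi$,
\begin{equation*}
 \hat M^t_1-\frac12\int_0^1\norm{\pi^t_s-\hat u'(X^t_s)}^2_{c(X^t_s)}\,ds\le\frac{x}{\sqrt t}+\frac1{\sqrt t}\int_0^1\abs{N(\hat u'(X^t_s),X^t_s)}^2\,ds .
\end{equation*}
Since $\int_0^1\abs{N(\hat u'(X^t_s),X^t_s)}^2\,ds\to\int_{\R^l}\abs{N(\hat u'(x),x)}^2\hat m'(x)\,dx<\infty$ in $\mathbf Q'$--probability, $L^\pi_t-q'$ is bounded above by an $O_{\mathbf Q'}(t^{-1/2})$ quantity uniformly in $\pi$, whence $\mathbf Q'(L^\pi_t<q)=\mathbf Q'(L^\pi_t-q'<q-q')\to1$.

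\emph{The entropy inequality and conclusion.} Under $\mathbf Q'$, $\ln(d\mathbf Q'/d\mathbf P)=\tfrac12\int_0^t\abs{\hat v'(X_s)}^2\,ds+\int_0^t\hat v'(X_s)^T\,d\tilde W_s$, so Cauchy--Schwarz and the ergodic limit $\tfrac1t\mathbf E_{\mathbf Q'}\int_0^t\abs{\hat v'(X_s)}^2\,ds=\mathbf E_{\mathbf Q'}\int_{\R^l}\abs{\hat v'}^2\mu_t(dx)\to 2J^{\text s}_{q'}$ give $\mathbf E_{\mathbf Q'}[\ind_{\{L^\pi_t<q\}}\ln(d\mathbf Q'/d\mathbf P)]\le t(J^{\text s}_{q'}+o(1))$. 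Writing $\mathbf P(L^\pi_t<q)=\mathbf Q'(L^\pi_t<q)\,\mathbf E_{\mathbf Q'}\bl[(d\mathbf P/d\mathbf Q')\mid L^\pi_t<q\br]$ and applying Jensen's inequality to $-\ln$ yields
\begin{equation*}
  \frac1t\ln\mathbf P(L^\pi_t<q)\ge\frac1t\ln\mathbf Q'(L^\pi_t<q)-\frac1{\mathbf Q'(L^\pi_t<q)}\,\frac1t\,\mathbf E_{\mathbf Q'}\Bl[\ind_{\{L^\pi_t<q\}}\ln\frac{d\mathbf Q'}{d\mathbf P}\Br],
\end{equation*}
whose right side tends to $-J^{\text s}_{q'}$ because $\mathbf Q'(L^\pi_t<q)\to1$; letting $q'\uparrow q$ and using continuity of $J^{\text s}_\bullet$ completes the proof. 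I expect the main obstacle to be the uniformity in $\pi$ of $\mathbf Q'(L^\pi_t<q)\to1$: $\pi$ is an arbitrary progressively measurable --- possibly unbounded and non-Markovian --- process, and it is precisely the strict concavity of $M(\cdot,x)$ (which converts every deviation of $\pi$ from $\hat u'$ into the subtracted quadratic penalty) combined with the exponential supermartingale bound that tames it. The supporting facts --- ergodic averaging and moment control for $X$ under $\mathbf Q'$ through \eqref{eq:45} and the regularity of $\hat f',\hat m'$ from the earlier lemmas, and the identification of the relative--entropy rate with $J^{\text s}_{q'}$ via \eqref{eq:103'}--\eqref{eq:104'} --- are the remaining routine ingredients.
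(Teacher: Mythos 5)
Your overall strategy is the same as the paper's: tilt the measure by the saddle--point drift $\hat v'=-\tilde\lambda N(u^{\tilde\lambda}(\cdot),\cdot)+\sigma^T\nabla f^{\tilde\lambda}$ at a slightly smaller threshold (the paper's \eqref{eq:35'}), show that under the tilted measure the shortfall event becomes typical uniformly over portfolios by converting any deviation of $\pi$ from $\hat u'$ into a quadratic penalty absorbed by an exponential supermartingale (your supermartingale bound is algebraically the same device as the stochastic exponential $\mathcal{E}^t$ in \eqref{eq:6}), identify the cost of the tilt with $J^{\text{s}}_{q'}$ (the paper's \eqref{eq:57}, which indeed follows from \eqref{eq:103'}--\eqref{eq:104'} after the integration--by--parts/extension step \eqref{eq:71}--\eqref{eq:47a}), and finish by continuity of $J^{\text{s}}_\bullet$. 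Up to that point your argument is sound, including the handling of arbitrary progressively measurable $\pi$ and of the degenerate case $\hat\lambda'=0$.

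The gap is in your final step. You replace the paper's pathwise estimate of $d\mathbf P/d\tilde{\mathbf P}^{t}$ on an intersection of high--probability events (the last lines of \eqref{eq:17}) by the conditional--entropy/Jensen inequality, and this requires the $L^1$--ergodic statement $\tfrac1t\,\mathbf E_{\mathbf Q'}\int_0^t\abs{\hat v'(X_s)}^2\,ds\to 2J^{\text{s}}_{q'}$ together with a second--moment bound $\mathbf E_{\mathbf Q'}\bl(\int_0^t\hat v'(X_s)^T\,d\tilde W_s\br)^2=O(t)$ for the Cauchy--Schwarz step. Neither is established: $\abs{\hat v'}^2$ has quadratic growth, the ergodic property available (the paper's \eqref{eq:48'}, obtained from positive Harris recurrence and the ratio ergodic theorem) is only convergence \emph{in probability}, and your appeal to \eqref{eq:37} is beside the point, since that condition controls the initial law under $\mathbf P$ and says nothing about $\sup_{s\le t}\mathbf E_{\mathbf Q'}\abs{X_s}^2$ for the diffusion with the tilted drift $\theta+\sigma\hat v'$, whose stability does not follow from \eqref{eq:45} without a separate Lyapunov argument (Fatou only gives the wrong--way inequality). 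This is exactly the expectation--of--an--unbounded--functional issue the paper criticizes in Nagai's argument and is why its proof stays pathwise: intersect the shortfall event with $\{\,\abs{\int_0^1\abs{\hat v'(X^t_s)}^2ds-\int\abs{\hat v'}^2 m^{\tilde\lambda}dx}<2\delta\,\}$ and $\{\,t^{-1/2}\abs{\int_0^1\hat v'(X^t_s)^Td\tilde W^t_s}<\delta\,\}$, both of probability tending to one by \eqref{eq:48'} and the L\'englart--Rebolledo inequality, and bound the Radon--Nikodym derivative deterministically there. Your proof is repaired by exactly this substitution (your uniform-in-$\pi$ supermartingale estimate is unaffected); as written, however, the entropy step does not go through. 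A minor further over-claim: the $O_{\mathbf Q'}(t^{-1/2})$ rate for $\int\overline M'\,(\mu_t-\hat m')(dx)$ via a CLT for additive functionals with quadratic growth is neither justified nor needed --- convergence in probability suffices since $q-q'>0$ is fixed.
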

\begin{remark}
Let  $\mathbb C_b^2$ represent the set of twice continuously
differentiable functions on $\R^l$ with bounded second derivatives.
It is shown in Lemma \ref{le:minmax} below
 that  $F(\lambda)=
 \inf_{f\in\mathbb{C}_b^2}\sup_{m\in \mathbb
  P}G(\lambda,f,m)$\,.
 Thus, the assertion of Theorem \ref{the:bounds}
 is consistent with the intuition
 provided after \eqref{eq:26}.
\end{remark}
Let, given $\lambda\in\R$\,, 
$f\in \mathbb C^2$\,, 
 and $v=(v(x),\,x\in\R^l)$\,,
  \begin{multline}
    \label{eq:61}
      \breve H(x;\lambda, f,v)= 
-\lambda  M(v(x),x)
+\frac{1}{2}\,\abs{-\lambda
N(v(x),x)+{\sigma(x)}^T\nabla f(x)}^2
\\+\nabla f(x)^T\,\theta(x)
+\frac{1}{2}\, \text{tr}\,\bl({\sigma(x)}{\sigma(x)}^T\nabla^2 f(x)\br)\,.
\end{multline}
By \eqref{eq:103'}, for all $x\in\R^l$\,,
\begin{equation}
  \label{eq:19}
  \breve H(x;\hat\lambda,\hat f,\hat u)=F(\hat\lambda)\,.
\end{equation}
In addition,
  by  Remark \ref{re:diffusion} below,
  $F(\hat\lambda)=\inf_{f\in\mathbb C^2}
\sup_{x\in\R^l}\breve H(x;\hat\lambda,f,\hat u)$\,.

Given $\tau>0$\,, let $\hat u^\tau(x)=\hat u(x)\chi_{[0,\tau]}(\abs{x})$\,.
Let us introduce the following condition
\begin{equation}
 \label{eq:22}
  \limsup_{\tau\to\infty}
\inf_{f\in\mathbb C^2_b}\sup_{x\in\R^l}
\breve H(x;\hat\lambda,f,\hat u^\tau)
\le F(\hat\lambda)\,.
\end{equation}
By Lemma \ref{le:condition} below, \eqref{eq:22} holds provided that
 either $\hat\lambda=0$ or
there exist $\varrho>0$\,, $C_1>0$ and $C_2>0$ such that, for all $x\in\R^l$\,,
\begin{equation}
  \label{eq:31} (1+\varrho)   \norm{b(x)\sigma(x)^T\nabla\hat
  f(x)}^2_{c(x)^{-1}}
-\norm{a(x)-r(x)\mathbf1}^2
_{c(x)^{-1}}\le C_1\abs{x}+C_2\,.
\end{equation}
We also introduce  the following stronger version of \eqref{eq:31}:
\begin{equation}
  \label{eq:39}
\lim_{\abs{x}\to\infty}\bl(
  (1+\varrho)   \norm{b(x)\sigma(x)^T\nabla\hat
  f(x)}^2_{c(x)^{-1}}
-\norm{a(x)-r(x)\mathbf1}^2
_{c(x)^{-1}}\br)=-\infty\,.
\end{equation}
 Let $\hat\pi=(\hat u(X_t)\,,t\in\R_+)$ and
 $\hat\pi^\tau=(\hat u^\tau(X_t)\,,t\in\R_+)$\,.
\begin{theorem}
  \label{the:risk-sens}
Suppose that  
\eqref{eq:45} holds with $\Phi(x)=0$\,.
\begin{enumerate}
\item 
If \eqref{eq:22} holds, 
then 
 \begin{align*}
\lim_{\tau\to\infty}\liminf_{t\to\infty}\frac{1}{t}\ln
\mathbf{P}(L^{\hat\pi^\tau}_t< q)=
\lim_{\tau\to\infty}\limsup_{t\to\infty}\frac{1}{t}\ln
\mathbf{P}(L^{\hat\pi^\tau}_t\le q)= -J^{\text{s}}_{q}\,.
\end{align*}
\item
If, in addition,  \eqref{eq:39} holds,
then
  \begin{equation*}
\lim_{t\to\infty}\frac{1}{t}\ln
\mathbf{P}(L^{\hat\pi}_t< q)=
\lim_{t\to\infty}\frac{1}{t}\ln
\mathbf{P}(L^{\hat\pi}_t\le q)= -J^{\text{s}}_{q}\,.
  \end{equation*}
\end{enumerate}
\end{theorem}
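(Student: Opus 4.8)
The plan is to prove the two halves of each identity separately. The lower bounds are immediate: since \eqref{eq:45} is assumed with $\Phi(x)=0$, Theorem~\ref{the:bounds} applies to the admissible portfolios $\hat\pi^\tau$ and $\hat\pi$ and gives $\liminf_{t\to\infty}(1/t)\ln\mathbf{P}(L^{\hat\pi^\tau}_t<q)\ge -J^{\text{s}}_q$ for every $\tau>0$ and $\liminf_{t\to\infty}(1/t)\ln\mathbf{P}(L^{\hat\pi}_t<q)\ge -J^{\text{s}}_q$; since $\{L^\pi_t<q\}\subset\{L^\pi_t\le q\}$ the same bounds hold with $<q$ replaced by $\le q$. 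All the remaining work is the matching upper bound on the relevant $\limsup$, and the tool for it is the exponential inequality \eqref{eq:1a}.

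For a feedback portfolio $\pi_t=v(X_t)$ (with $v=\hat u^\tau$ in part~1 and $v=\hat u$ in part~2) apply \eqref{eq:1a} with $\lambda=\hat\lambda$ and a test function $f$; by \eqref{eq:61} the exponent there equals $-t\hat\lambda L^\pi_t-t\int_0^1\breve H(X^t_s;\hat\lambda,f,v)\,ds+f(X_t)-f(X_0)$, and since the underlying exponential process is a nonnegative local martingale the inequality also holds conditionally on $\mathcal F_0$. On the event $\{L^\pi_t\le q\}$ replace $-t\hat\lambda L^\pi_t$ by the smaller $-t\hat\lambda q$, bound the running cost by $\int_0^1\breve H(X^t_s;\hat\lambda,f,v)\,ds\le\sup_{x\in\R^l}\breve H(x;\hat\lambda,f,v)$, pull out the $\mathcal F_0$-measurable factor $e^{-f(X_0)}$, and use a lower bound on $f$ to control $e^{f(X_t)}$ from below; taking expectations yields $\mathbf{P}(L^\pi_t\le q)\le C(f)\,\mathbf{E}e^{f(X_0)}\,\exp(t[\hat\lambda q+\sup_{x\in\R^l}\breve H(x;\hat\lambda,f,v)])$ with $C(f)$ independent of $t$. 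Provided $f$ is bounded below and grows no faster than $\gamma|x|^2$ at infinity, $\mathbf{E}e^{f(X_0)}<\infty$ by \eqref{eq:37}, and therefore $\limsup_{t\to\infty}(1/t)\ln\mathbf{P}(L^\pi_t\le q)\le\hat\lambda q+\sup_{x\in\R^l}\breve H(x;\hat\lambda,f,v)$.

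For part~1 take $v=\hat u^\tau$, which is bounded, so that the last estimate holds for every $f$ in the subclass of $\mathbb C^2_b$ consisting of functions bounded below with quadratic growth of coefficient below $\gamma$; the near-minimizers for \eqref{eq:22} furnished by the proof of Lemma~\ref{le:condition} lie in this subclass, so \eqref{eq:22} may be applied to it and $\limsup_{t\to\infty}(1/t)\ln\mathbf{P}(L^{\hat\pi^\tau}_t\le q)\le c_\tau:=\hat\lambda q+\inf_{f\in\mathbb C^2_b}\sup_{x\in\R^l}\breve H(x;\hat\lambda,f,\hat u^\tau)$. Condition \eqref{eq:22} says $\limsup_{\tau\to\infty}c_\tau\le\hat\lambda q+F(\hat\lambda)=-J^{\text{s}}_q$, the equality holding because $\hat\lambda$ attains the supremum in \eqref{eq:36}. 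Combining this with the lower bounds, for every $\tau$ one has $-J^{\text{s}}_q\le\liminf_{t\to\infty}(1/t)\ln\mathbf{P}(L^{\hat\pi^\tau}_t<q)\le\limsup_{t\to\infty}(1/t)\ln\mathbf{P}(L^{\hat\pi^\tau}_t\le q)\le c_\tau$; since $c_\tau\ge -J^{\text{s}}_q$ for every $\tau$ while $\limsup_\tau c_\tau\le -J^{\text{s}}_q$, letting $\tau\to\infty$ forces both $\lim_{\tau\to\infty}\liminf_{t\to\infty}(1/t)\ln\mathbf{P}(L^{\hat\pi^\tau}_t<q)$ and $\lim_{\tau\to\infty}\limsup_{t\to\infty}(1/t)\ln\mathbf{P}(L^{\hat\pi^\tau}_t\le q)$ to equal $-J^{\text{s}}_q$, which is part~1. (If $\hat\lambda=0$ then $J^{\text{s}}_q=0$ and the upper bound reduces to the trivial $\mathbf{P}(\cdot)\le1$.)

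For part~2 use the untruncated feedback $v=\hat u$ together with $f=\hat f$ from the saddle point. Then $\hat\pi$ is admissible because $\hat u$ has linear growth by \eqref{eq:69}, and \eqref{eq:19} is the pointwise identity $\breve H(x;\hat\lambda,\hat f,\hat u)=F(\hat\lambda)$, so $\sup_{x\in\R^l}\breve H(x;\hat\lambda,\hat f,\hat u)=F(\hat\lambda)$ and no infimum has to be taken; the scheme of the second paragraph then gives $\limsup_{t\to\infty}(1/t)\ln\mathbf{P}(L^{\hat\pi}_t\le q)\le\hat\lambda q+F(\hat\lambda)=-J^{\text{s}}_q$, which together with the lower bound squeezes all four limits to $-J^{\text{s}}_q$. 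I expect the main obstacle to be exactly this part~2 boundary control: here $\hat f$ is genuinely unbounded (only $\nabla\hat f$ is known to have linear growth, so $\hat f$ may be quadratic), so the facts that $\hat f$ is bounded below and that $\mathbf{E}e^{\hat f(X_0)}<\infty$ are not automatic. This is precisely where the strengthening \eqref{eq:39} of \eqref{eq:31} is needed, through Lemma~\ref{le:condition} and the Kaise--Sheu construction of $\hat f$: it forces $\nabla\hat f$, hence $\hat u$, to be small enough relative to $a(x)-r(x)\ind$ that the truncation can be removed and the crude estimate still closes, in conjunction with \eqref{eq:37} and, if the crude bound is not sharp enough, with the large deviation estimate for the pair $(L^\pi_t,\mu_t)$ developed in Section~\ref{sec:prelim}. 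In part~1 the analogous but far milder difficulty, namely controlling $f(X_t)-f(X_0)$ for $f\in\mathbb C^2_b$, is already handled by the conditioning on $\mathcal F_0$ and \eqref{eq:37} used above.
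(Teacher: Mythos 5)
Your lower bounds (via Theorem \ref{the:bounds}) and your exponential Chebyshev estimate are fine: conditioning on $\mathcal F_0$ and using \eqref{eq:37} to get $\mathbf P(L^{\hat\pi^\tau}_t\le q)\le C(f)\,\mathbf Ee^{f(X_0)}\exp\bl(t[\hat\lambda q+\sup_{x}\breve H(x;\hat\lambda,f,\hat u^\tau)]\br)$ is a legitimate (and slightly cleaner) variant of the paper's reverse H\"older step, valid for $f$ bounded below with quadratic growth coefficient below $\gamma$. The gap in part 1 is the passage from this restricted class of test functions to the class $\mathbb C^2_b$ appearing in \eqref{eq:22}. Your justification --- that ``the near-minimizers furnished by the proof of Lemma \ref{le:condition} lie in this subclass'' --- is unsupported: the proof of Lemma \ref{le:condition} works entirely on the dual side, with maximizing measures $\nu_\tau$ of $\nu\mapsto\inf_f\int\breve H\,d\nu$, and exhibits no near-minimizing $f$ at all; moreover there is no reason a near-minimizer of $\inf_{f\in\mathbb C^2_b}\sup_x\breve H(x;\hat\lambda,f,\hat u^\tau)$ should be bounded below or grow slower than $\gamma\abs{x}^2$, since $\gamma$ comes from the initial condition \eqref{eq:37} and is unrelated to the Bellman equation (natural candidates are perturbations of $\hat f$, whose quadratic growth is dictated by the model). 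The paper closes exactly this gap with the minimax identity \eqref{eq:18}, $\inf_{f\in\mathcal A_\kappa}\sup_x\breve H=\inf_{f\in\mathbb C^2_b}\sup_x\breve H$, proved by a sup-compactness bound of the type of Lemma \ref{le:sup-comp} for $\breve H(\cdot;\hat\lambda,f_\kappa,\hat u^\tau)$ together with the Aubin--Ekeland minimax theorem (the reverse H\"older trick then permits $\kappa<\epsilon\gamma$; with your conditioning argument $\kappa<\gamma$ would do, but you still need \eqref{eq:18}). Without some such argument part 1 does not close.

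Part 2 is essentially missing. You correctly identify the obstruction --- $\hat f$ may be genuinely quadratic, need not be bounded below, and $\mathbf Ee^{\hat f(X_0)}$ need not be finite --- but the claim that \eqref{eq:39} ``forces the crude estimate to close'', possibly with help from large deviation estimates for $(L^\pi_t,\mu_t)$, is not an argument, and \eqref{eq:39} says nothing about the growth of $\hat f$ relative to $\gamma$. The paper's proof of part 2 is a different, verification-type argument: a Girsanov change to a measure $\hat{\mathbf P}$ with drift $-\hat\lambda N(\hat u(X_s),X_s)+\sigma(X_s)^T\nabla\hat f(X_s)$, under which the ergodic Bellman equation \eqref{eq:103'} yields the identity \eqref{eq:40}, $\mathbf Ee^{-t\hat\lambda L^{\hat\pi}_t}=e^{tF(\hat\lambda)}\hat{\mathbf E}e^{\hat f(X_0)-\hat f(X_t)}$; It\^o's formula applied to $e^{\hat f(X_0)-\hat f(X_t)}$ exhibits the drift $e^{\hat f(X_0)-\hat f(X_s)}\bl(\breve H(X_s;\hat\lambda,\mathbf 0,\hat u)-F(\hat\lambda)\br)$, and \eqref{eq:39} enters precisely through \eqref{eq:34} (with $\tau=\infty$) to make this factor negative for $\abs{x}$ large, so that after localization by $\hat\tau_R$ and Fatou, $\hat{\mathbf E}e^{\hat f(X_0)-\hat f(X_t)}$ grows at most linearly in $t$; Chernoff's bound then gives $\limsup_t(1/t)\ln\mathbf P(L^{\hat\pi}_t\le q)\le\hat\lambda q+F(\hat\lambda)=-J^{\text{s}}_q$. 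Note that this route keeps the difference $\hat f(X_0)-\hat f(X_t)$ inside the $\hat{\mathbf P}$-expectation and so never needs $\mathbf Ee^{\hat f(X_0)}<\infty$ or a lower bound on $\hat f$; your scheme, which requires both, would fail here, so an argument of this Lyapunov type (or a workable substitute) has to be supplied.
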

\begin{remark}
Conditions \eqref{eq:31} and \eqref{eq:39} are modelled on respective conditions
 (2.25)  in Nagai \cite{Nag12} and   (4.1) in Nagai \cite{Nag03}.
As the proof of Lemma \ref{le:condition}  shows, an upper bound on  the righthand side of \eqref{eq:31} can be
  allowed to grow at a subquadratic rate.
\end{remark}
\begin{remark}
  One can see that
  \begin{equation*}
    \inf_{f\in\mathbb C^2_b}\sup_{x\in\R^l}
\breve H(x;\hat\lambda,f,\hat u^\tau)
=\inf_{f\in\mathbb C^2}\sup_{x\in\R^l}
\breve H(x;\hat\lambda,f,\hat u^\tau)\,.
  \end{equation*}
\end{remark}
\begin{remark}
  The limit in \eqref{eq:39} holds provided
  \begin{equation*}
    \limsup_{\abs{x}\to\infty}
\frac{1}{\abs{x}^2}\bl(\norm{b(x)\sigma(x)^T\nabla\hat
  f(x)}^2_{c(x)^{-1}}
-\norm{a(x)-r(x)\mathbf1}^2
_{c(x)^{-1}}\br)<0\,.
  \end{equation*}
It would be nice to have a condition expressed in terms of the
coefficients of the model equations but this seems to be an open
problem. \end{remark}
\begin{remark}
    Under the hypotheses of part 1 of Theorem \ref{the:risk-sens},
there exists strictly increasing function $t(\tau)$ such that
$  \lim_{\tau\to\infty}(1/t(\tau))\ln
\mathbf{P}(L^{\hat\pi^\tau}_{t(\tau)}< q)= -J^{\text{s}}_{q}\,.
$
Letting $\tau(t)$ represent the inverse function to $t(\tau)$\,,
we have that
$  \lim_{t\to\infty}(1/t)\ln
\mathbf{P}(L^{\hat\pi^{\tau(t)}}_{t}< q)= -J^{\text{s}}_{q}\,,
$
so, $(\hat u^{\tau(t)}(X_t)\,,t\ge0)$ is an asymptotically optimal
portfolio.
\end{remark}
\begin{remark}
\label{re:nonvolat}When
$\beta(x)=0$\,,
  the proofs of  Theorems \ref{the:bounds} and
 \ref{the:risk-sens}
 go through and their assertions are maintained   provided
part 1 of condition (N) is satisfied
 and $\inf_{x\in\R^l}(r(x)-\alpha(x))<q$\,, with the
$\epsilon$--optimal portfolio being defined similarly.
If $\inf_{x\in\R^l}(r(x)-\alpha(x))\ge q$\,, 
then investing  in the safe security only
is obviously optimal.
\end{remark}
For the case where  $\alpha(x)=r(x)$ 
and $\beta(x)=0$\,, the control in \eqref{eq:69}
appears in Theorem 2.5  in Nagai \cite{Nag12}, which 
obtains the
 limit in part 2 of Theorem \ref{the:risk-sens}.
Instead of condition 
\eqref{eq:39}, it is required  in Nagai \cite{Nag12}
 that $\norm{b(x)\sigma(x)^T\nabla\hat
  f(x)}^2_{c(x)^{-1}}
-\norm{a(x)-r(x)\mathbf1}^2
_{c(x)^{-1}}<0$\,, for all $x$ (see (2.25) in Nagai
\cite{Nag12}). Since it is assumed in Nagai \cite{Nag12} that 
$\norm{a(x)-r(x)\mathbf1}^2_{c(x)^{-1}}$ 
is bounded below by a quadratic function of $\abs{x}$
(see (2.21) there)   and since $\abs{\nabla\hat f(x)}$ is, at most, of linear
growth, that condition implies \eqref{eq:31}. It does not imply \eqref{eq:39}.
As  mentioned in the Introduction,
 we have our doubts as to the proof of Theorem 2.5 in
Nagai \cite{Nag12} being sound: the last display of the proof on p.660
 doesn't seem to be substantiated in that it is not clear how the term 
$\check{ E}\int_0^Te^{- w(X_s)}(-\chi)\,ds$ on the preceding line
 is tackled, $-\chi$ being a positive number,
 e.g., why should $\check{ E}e^{-
  w(X_s)}<\infty$\,, given that $-w(x)$ grows 
no slower than quadratically
with $\abs{x}$ ?
Similar terms were treated more carefully in Kuroda and Nagai \cite{KurNag02} 
and in Nagai \cite{Nag03}, e.g., in Nagai \cite{Nag03} it is required
that $\norm{b(x)\sigma(x)^T\nabla\hat
  f(x)}^2_{c(x)^{-1}}
-\norm{a(x)-r(x)\mathbf1}^2
_{c(x)^{-1}}\to-\infty$\,, as $\abs{x}\to\infty$\,, which condition is
equivalent to \eqref{eq:39} when $\norm{a(x)-r(x)\mathbf1}^2
_{c(x)^{-1}}$ is bounded below by a quadratic function of $\abs{x}$ and
may suffice to conclude the proof in Nagai \cite{Nag12}.
 Theorem 2.4 in 
Nagai \cite{Nag12} does not require the condition
but, as we have mentioned in the Introduction,
 it produces different portfolios for different time horizons $T$\,.   Besides,
additional assumptions are introduced both in Theorem 2.4
and in Theorem 2.5 in Nagai \cite{Nag12}
 (see (2.19) and (2.20) there) along with the requirement that $0<q<-F'(0+)$\,,
the righthand side of the latter inequality 
 ruling out the possibility that $\hat\lambda=0$\,. 
(Interestingly enough, the condition that   $q>0$ is consistent with
Remark \ref{re:nonvolat}.)
Stability condition \eqref{eq:45} is assumed to hold in
 Nagai \cite{Nag12} with
$\Phi(x)=\sigma(x)b(x)^Tc(x)^{-1}$ and $\Psi$ being the $l\times
l$--identity matrix. 
As argued in the Introduction, 
imposing a stability condition on $X_t$ only, which is
what the choice $\Phi(x)=0$ does, is more natural from an application
point of view. 

In the  Gaussian case,  finding the 
 portfolio
$\hat\pi$  reduces to
solving an algebraic Riccati equation.
Let us  assume that
 $a(x)$\,, $r(x)$\,,  
$\alpha(x)$\,, and $\theta(x)$ are affine functions of $x$ and that the diffusion
coefficients are constant. More specifically, let
\begin{subequations}
  \begin{align*}
a(x)=A_1x+a_2\,, \\
r(x)=r_1^Tx+r_2\,,\\
\alpha(x)=\alpha_1^Tx+\alpha_2\,, \\
\theta(x)=\Theta_1 x+\theta_2\,,
\intertext{and}
b(x)=b,\;\beta(x)=\beta,\;\sigma(x)=\sigma\,,
  \end{align*}
\end{subequations}
where $A_1\in \R^{n\times l}$\,, $a_2\in\R^n$\,, 
$r_1\in\R^l$\,, $r_2\in \R$\,, $\alpha_1\in \R^l$\,,
$\alpha_2\in\R$\,, $\Theta_1\in \R^{l\times l}$\,,
 $\theta_1\in \R^l$\,, $b$ is an $n\times k$--matrix such that
the matrix $bb^T$ is positive definite, $\beta$ is a non--zero
$k$--vector,
 and
$\sigma$ is an $l\times k$--matrix such that the matrix $\sigma\sigma^T$ is
positive definite.
Condition \eqref{eq:45}  is fulfilled provided the pair
$(A_1-\mathbf1 r_1^T,\Theta_1)$ 
is detectable, i.e., there exists $l\times n$--matrix $\Phi$ such that
the matrix $\Theta_1-\Phi (A_1-\mathbf1 r_1^T)$ is stable,
for, in that case, there exists  symmetric positive definite
$l\times l$--matrix
$\Psi$ such that $ (\Theta_1-\Phi(A_1-\mathbf1 r_1^T))^T\Psi+
\Psi(\Theta_1-\Phi(A_1-\mathbf1 r_1^T))=-I_l$\,,
see, e.g., p.252 in Bellman \cite{Bel70} or Theorem 8.7.2 on p.270 
in Lancaster \cite{Lan69},
so one can take $\Phi(x)=\Phi $\,, where
$I_l$ stands for the $l\times l$--identity matrix. 
Consequently, \eqref{eq:45} holds with $\Phi=0$ when the
matrix $\Theta_1$ is stable.

Let\begin{align*}
  A&=\Theta_1-\frac{\hat\lambda}{1+\hat\lambda}\,\sigma b^Tc^{-1}(A_1-\mathbf 1
  r_1^T),\\
B&=\sigma\sigma^T-\frac{\hat\lambda}{1+\hat\lambda}\,\sigma
  b^Tc^{-1}b\sigma^T\,,\intertext{and}
C&=
(A_1-\mathbf1 r_1^T)^Tc^{-1}(A_1-\mathbf1 r_1^T)\,.
\end{align*}
Let us  suppose that there exists
 symmetric $l\times l$--matrix $\hat P_1$ 
that satisfies the algebraic Riccati  equation
\begin{equation*}
 A^T\hat P_1+\hat P_1A+ \hat P_1B\hat P_1
    -\frac{\hat\lambda}{1+\hat\lambda}\,C=0\,.
\end{equation*}
  Conditions
for the existence of solutions can be found
in Fleming and
   Sheu \cite{FleShe02}, 
   Willems \cite{MR0308890}, and  Wonham \cite{MR0239161}.
For instance, if
$\Theta_1$ is a  stable matrix, then  the pairs $(A,\sigma)$
and $(A_1-\mathbf 1 r_1^T,A)$ are
stabilizable and detectable, respectively, so,
by Theorem 4.1 in Wonham \cite{MR0239161}
 there  exists a negative semidefinite symmetric matrix that
satisfies the equation and  
  the matrix $D=A+B\hat P_1$
is stable.  Lemma 3.3 in Fleming and Sheu \cite{FleShe02}
 asserts the uniqueness of $\hat P_1$\,, provided that $\Theta_1+\Theta^T_1$ is
 negative definite.
 With $D$ being stable, the equation
\begin{multline*}
D^T
\hat p_2
    -\frac{\hat\lambda}{1+\hat\lambda}\,
(A_1-\mathbf
1r_1^T+b\sigma^T\hat P_1)^Tc^{-1}(a_2-r_2\mathbf1+\hat\lambda
b\beta)\\
-\hat\lambda(r_1-\alpha_1-\hat P_1\sigma\beta)+
\hat P_1\theta_2=0
\end{multline*}
has a unique solution for $\hat p_2$\,. The function 
$\hat f(x)=  x^T\hat P_1x/2+\hat p_2^T x$
solves the ergodic Bellman equation \eqref{eq:103'}, where
\begin{equation}
  \label{eq:9}
        \hat u(x)=\frac{1}{1+\hat\lambda}\,c^{-1}\bl(A_1-\mathbf 1r_1^T
+b\sigma^T\hat P_1)x+
\frac{1}{1+\hat\lambda}\,c^{-1}\bl(a_2-r_2\mathbf1
+\hat\lambda b\beta+b\sigma^T\hat p_2\br)\,.
\end{equation}
If the matrix
$(b\sigma^T\hat P_1)^Tc^{-1}b\sigma^T\hat P_1-
(A_1-\mathbf 1r_1^T)^Tc^{-1}(A_1-\mathbf 1r_1^T)$ is
 negative definite, then   \eqref{eq:39}  holds.
By \eqref{eq:104'},
  $\hat m$ is the invariant density of the linear
diffusion
\begin{multline*}
    dY_t=DY_t\,dt
+\bl(-\frac{\hat\lambda}{1+\hat\lambda}\,\sigma b^Tc^{-1}
(a_2-r_2\mathbf1+\hat\lambda b\beta
+b\sigma^T\hat p_2)+\hat\lambda\sigma\beta
+\sigma\sigma^T\hat p_2+\theta_2\br)\,dt\\+\sigma\,dW_t
\end{multline*}
and 
\begin{multline*}
   F(\hat\lambda)
=- \frac{1}{2}\,\frac{\hat\lambda}{1+\hat\lambda}\,\norm{a_2-r_2
\mathbf1+\hat\lambda b\beta+b\sigma^T\hat p_2}^2_{c^{-1}}
-\hat\lambda(r_2-\alpha_2
+\frac{1}{2}\,\abs{\beta}^2-\beta^T\sigma^T\hat p_2)\\
+\frac{1}{2}\,\hat\lambda^2\abs{\beta}^2
+\frac{1}{2}\,\hat p_2^T\sigma\sigma^T\hat p_2+\hat p_2^T\theta_2+\frac{1}{2}\, \text{tr}\,(\sigma\sigma^T
\,\hat P_1)\,.
\end{multline*}
For the nonbenchmarked case, the  portfolio in \eqref{eq:9}  is obtained
in  Hata, Nagai, and Sheu \cite{Hat10} (see
(2.39) and Theorem 2.2 there). 
 For the optimality of $\hat\pi$\,,  those authors,
who assume that $r_1=0$\,, $\alpha_1=0$\,, $\alpha_2=0$\,, and
$\beta=0$\,,
  in addition to
 requiring that
the matrix $\Theta_1-\sigma b^Tc^{-1}A_1$ be stable
and that the matrix $(b\sigma^T\hat P_1)^Tc^{-1}b\sigma^T\hat P_1-
(A_1-\mathbf 1r_1^T)^Tc^{-1}(A_1-\mathbf 1r_1^T)$ be
negative definite, need that
$(\Theta_1,\sigma)$ be controllable and that $q<-F'(0+)$\,.
Our results relax those restrictions as well as incorporate the case
 of nonzero $r_1$\,, $\alpha_1$\,, $\alpha_2$\,, and $\beta$\,.
It has to be mentioned that the proof of Theorem 2.2 in
  Hata, Nagai, and Sheu \cite{Hat10} is omitted and that the authors
produce also 
non time--homogeneous portfolios that are ''nearly'' optimal under
weaker hypotheses but require the same stability condition. 
\section{Technical preliminaries}
\label{sec:prelim}
In this section, we lay the groundwork for the proofs of the main results.
Let, given $x\in\R^l$\,, $\lambda\ge0$\,, and $p\in\R^l$\,,
\begin{multline}
  \label{eq:80}
\hat   H(x;\lambda,p)= -\lambda\sup_{u\in\R^n}\bl(  M(u,x)
- \frac{1}{2}\,\lambda
\abs{N(u,x)}^2+ p^T\sigma(x) N(u,x)\br)+
p^T\,\theta(x)\\+\frac{1}{2}\,\abs{{\sigma(x)}^Tp}^2\,.
\end{multline}

One can thus write \eqref{eq:62} more compactly as
\begin{equation}
  \label{eq:13}
  G(\lambda,f,m)=\int_{\R^l}\bl(\hat H(x;\lambda,\nabla f(x)) 
+\frac{1}{2}\, \text{tr}\,\bl({\sigma(x)}{\sigma(x)}^T\nabla^2 f(x)\br)\br)
\,m(x)\,dx\,.
\end{equation}
Let \begin{equation*}
  T_\lambda(x)=\sigma(x)\sigma(x)^T-\frac{\lambda}{1+\lambda}\, \sigma(x)b(x)^T
c(x)^{-1}b(x)\sigma(x)^T\,.
\end{equation*}
By part 1 of  condition (N), 
 $T_\lambda(x)$ is a uniformly positive definite
symmetric $l\times
l$--matrix. 
Optimizing on the righthand side of \eqref{eq:80} yields 
\begin{multline}
  \label{eq:3}
  \sup_{u\in\R^n}\bl( M(u,x)
- \frac{1}{2}\,\lambda
\abs{N(u,x)}^2+p^T\sigma(x) N(u,x)\br)
\\=    \frac{1}{2}\,\frac{1}{1+\lambda}\,
\norm{a(x)-r(x)\mathbf1+\lambda
  b(x)\beta(x)+b(x)\sigma(x)^Tp}^2_{c(x)^{-1}}
\\- \frac{1}{2}\,\lambda\abs{\beta(x)}^2+
r(x)-\alpha(x)+\frac{1}{2}\,\abs{\beta(x)}^2-\beta(x)^T\sigma(x)^Tp
  \end{multline}
so that
\begin{multline}
  \label{eq:65}
\hat  H(x;\lambda,p)=
\frac{1}{2}\,p^TT_\lambda(x)p
+\bl(-\frac{\lambda}{1+\lambda}\,(a(x)-r(x)\mathbf1\\
+\lambda
b(x)\beta(x))^Tc(x)^{-1}b(x)\sigma(x)^T+\lambda\beta(x)^T\sigma(x)^T
+\theta(x)^T\br)p\\
-\frac{\lambda}{2(1+\lambda)}\,\norm{a(x)-r(x)\ind+
\lambda b(x)\beta(x)}^2_{c(x)^{-1}}
\\-\lambda(r(x)-\alpha(x)+\frac{1}{2}\,\abs{\beta(x)}^2)
+\frac{1}{2}\,\lambda^2\abs{\beta(x)}^2
\,.
\end{multline}

Drawing on    Bonnans and Shapiro \cite{BonSha00}, we say that, given
topological space  $\mathbb T$\,, 
function $h:\,\mathbb T\to\R$ is 
$\inf$--compact, respectively, $\sup$--compact, if the sets
$\{x\in \mathbb T:\,h(x)\le \delta\}$\,, respectively, 
the sets $\{x\in \mathbb T:\,h(x)\ge \delta\}$\,, are compact for all
$\delta\in\R$\,. (It is worth noting that Aubin and
Ekeland \cite{AubEke84} only require that the sets above be
relatively compact. These two defintions are equivalent 
if the function in question is, in addition, lower
 semicontinuous,  respectively, upper semicontinuous.) 
We endow 
the set $\mathcal{P}$  of probability measures $\nu$ on $\R^l$
such that $\int_{\R^l}\abs{x}^2\,\nu(dx)<\infty$
 with the Kantorovich--Rubinstein distance 
\begin{equation*}
  d_1(\mu,\nu)=\sup\{\abs{\int_{\R^l}g(x)\,\mu(dx)-\int_{\R^l}g(x)\,\nu(dx)}:\;
\frac{\abs{g(x)-g(y)}}{\abs{x-y}}\le 1\text{ for all }x\not=y\}\,.
\end{equation*}
Convergence with respect to $d_1$ is equivalent to weak
convergence coupled with  convergence of the first moments, see, e.g.,
Villani \cite{Vil09}.

We  introduce, for $f\in\mathbb C^2$\,,
\begin{equation}
  \label{eq:59} 
H(x;\lambda,f)= \hat H(x;\lambda,\nabla f(x))
+\frac{1}{2}\, \text{tr}\,\bl({\sigma(x)}{\sigma(x)}^T\nabla^2 f(x)\br)
\end{equation}
so that, for $f\in\mathbb C^2_0$\,, 
\begin{equation}
  \label{eq:88}
    G(\lambda,f,m)=\int_{\R^l} H(x;\lambda, f)\,m(x)\,dx\,.
\end{equation}

For $\kappa>0$\,, we define   $f_\kappa(x)=\kappa\norm{
  x}^2_\Psi/2$ and let
  $\mathcal{A}_\kappa$ denote the convex hull of
$\mathbb C_0^2$ and of the function 
$f_\kappa$\,. The next lemma implies, in particular, that
 $F(\lambda)$ is  finite--valued.
\begin{lemma}
  \label{le:sup-comp}
Suppose that either $\lambda>0$ or that condition \eqref{eq:45} holds
with $\Phi(x)=0$\,. Then, for all $\kappa>0$ small enough, the function
$\int_{\R^l}H(x;\lambda,f_\kappa)\,\nu(dx)$
 is  $\sup$--compact in $\nu\in\mathcal{P}$ 
for the Kantorovich--Rubinstein distance $d_1$\,.
The function 
$\inf_{f\in\mathbb C^2_0}\int_{\R^l}H(x;\lambda, f)\,\nu(dx)$
is $\sup$--compact in $\nu$\,.
Furthermore,  the set
$\bigcup_{\{\lambda:\,\abs{\lambda-\overline\lambda}\le\overline\lambda/2\}}
\{\nu\in \mathcal{P}:\,\inf_{f\in\mathbb C^2_0}\int_{\R^l}H(x;\lambda,
f)\,\nu(dx) \ge \delta\}$ is relatively compact, where 
 $\overline \lambda>0$ and $\delta\in\R$\,.
\end{lemma}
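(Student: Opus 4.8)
The plan is to reduce all three assertions to a single coercivity estimate for the integrand: under the stated hypotheses, for every $\kappa>0$ small enough there are constants $c_4>0$ and $C_5$, with $c_4,C_5$ depending only on $\overline\lambda$ when $\lambda$ is confined to $[\overline\lambda/2,3\overline\lambda/2]$, such that
\[
H(x;\lambda,f_\kappa)\le -c_4\abs{x}^2+C_5\qquad\text{for all }x\in\R^l.
\]
Here $H(\cdot;\lambda,f)$ is as in \eqref{eq:59}. Once this is in hand, the rest is soft. The crux is this estimate, and I expect the only genuine difficulty there to be the bookkeeping around the stability condition \eqref{eq:45} in the case $\Phi(x)\ne0$.

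Granting the estimate, part~1 follows by integrating: $\int_{\R^l}H(x;\lambda,f_\kappa)\,\nu(dx)\le -c_4\int_{\R^l}\abs{x}^2\,\nu(dx)+C_5$, so the super-level set $\{\nu\in\mathcal P:\int H(x;\lambda,f_\kappa)\,\nu(dx)\ge\delta\}$ is contained in $K_R=\{\nu\in\mathcal P:\int\abs{x}^2\,\nu(dx)\le R\}$ with $R=(C_5-\delta)/c_4$. The set $K_R$ is tight and closed for weak convergence, hence weakly compact by Prokhorov, and the uniform $L^2$ bound makes $\abs{x}$ uniformly integrable along sequences in $K_R$, which upgrades weak convergence there to $d_1$–convergence; thus $K_R$ is $d_1$–compact. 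Since $H(\cdot;\lambda,f_\kappa)$ is continuous and bounded above, $\nu\mapsto\int H(x;\lambda,f_\kappa)\,\nu(dx)$ is $d_1$–upper semicontinuous, so the super-level set is $d_1$–closed, hence a compact subset of $K_R$. For part~3 one needs only that $c_4$, $C_5$, and therefore $R$, are uniform for $\lambda\in[\overline\lambda/2,3\overline\lambda/2]$, on which interval $\lambda$ and $\lambda/(1+\lambda)$ are bounded and bounded away from their extremes; then the union over such $\lambda$ stays inside one $K_R$ and is relatively $d_1$–compact.

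For part~2 I would first note that for $f\in\mathbb C^2_0$ the function $H(\cdot;\lambda,f)$, being equal to $\hat H(\cdot;\lambda,0)$ off the support of $f$, is bounded above by a function of at most linear growth (by \eqref{eq:42}); hence $\nu\mapsto\int H(x;\lambda,f)\,\nu(dx)$ is $d_1$–upper semicontinuous for each such $f$, and so is the pointwise infimum over $f\in\mathbb C^2_0$. To exploit $f_\kappa$, which is not compactly supported, I would truncate it with a standard cutoff, setting $\phi_R(x)=\phi(x/R)$ and noting $\phi_R f_\kappa\in\mathbb C^2_0$ with $\abs{\nabla(\phi_R f_\kappa)(x)}\le C\kappa\abs{x}$ and $\abs{\nabla^2(\phi_R f_\kappa)(x)}\le C\kappa$ uniformly in $R$, so that $\abs{H(x;\lambda,\phi_R f_\kappa)}\le C(1+\abs{x}^2)$ uniformly in $R$ while $H(x;\lambda,\phi_R f_\kappa)\to H(x;\lambda,f_\kappa)$ pointwise; since $\nu\in\mathcal P$, dominated convergence gives $\inf_{f\in\mathbb C^2_0}\int H(x;\lambda,f)\,\nu(dx)\le\int H(x;\lambda,f_\kappa)\,\nu(dx)\le-c_4\int\abs{x}^2\,\nu(dx)+C_5$, and the argument of the previous paragraph applies verbatim.

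It remains to establish the coercivity estimate. I would substitute $\nabla f_\kappa(x)=\kappa\Psi x$ and $\nabla^2 f_\kappa(x)=\kappa\Psi$ into \eqref{eq:59} and use the explicit formula \eqref{eq:65} for $\hat H$. Writing $A(x)=a(x)-r(x)\ind$, the only terms not manifestly $O(1+\abs{x})$ are: (i) $\kappa\,\theta(x)^T\Psi x$, which \eqref{eq:45} bounds for large $\abs{x}$ by $-\kappa c\abs{x}^2+\kappa C_0\abs{A(x)}\abs{x}$ with $c>0$ (using boundedness of $\Phi$); (ii) the cross term $-\tfrac{\kappa\lambda}{1+\lambda}A(x)^Tc(x)^{-1}b(x)\sigma(x)^T\Psi x$, bounded in modulus by $\kappa C_1\abs{A(x)}\abs{x}$ ($b,\sigma,c^{-1}$ bounded); (iii) the strictly negative term $-\tfrac{\lambda}{2(1+\lambda)}\norm{A(x)+\lambda b(x)\beta(x)}^2_{c(x)^{-1}}\le -c_3\abs{A(x)}^2+C_2$ with $c_3=\mathrm{const}\cdot\tfrac{\lambda}{1+\lambda}$ (uniform positive definiteness of $c(x)^{-1}$, boundedness of $b,\beta$); and (iv) $\tfrac{\kappa^2}{2}x^T\Psi T_\lambda(x)\Psi x\le C\kappa^2\abs{x}^2$ ($T_\lambda$ bounded). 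Since the cross terms in (i) and (ii) carry a factor $\kappa$, Young's inequality splits $\kappa(C_0+C_1)\abs{A(x)}\abs{x}$ into $c_3\abs{A(x)}^2$, absorbed by (iii), plus an $O(\kappa^2\abs{x}^2)$ term; together with (iv) this leaves $H(x;\lambda,f_\kappa)\le -\kappa c\abs{x}^2+O(\kappa^2\abs{x}^2)+O(1+\abs{x})$ for large $\abs{x}$, and choosing $\kappa$ small enough that the $\kappa^2\abs{x}^2$ term is dominated by $\tfrac12\kappa c\abs{x}^2$ gives the claim outside a large ball, after which continuity of $H(\cdot;\lambda,f_\kappa)$ handles a bounded region. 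The delicate point — and the reason the hypothesis reads ``$\lambda>0$ or \eqref{eq:45} with $\Phi(x)=0$'' — is that a strictly negative $\abs{A(x)}^2$ term, i.e.\ $c_3>0$, is available precisely when $\lambda>0$; when $\lambda=0$ one has $\Phi(x)=0$, the terms with factor $\lambda/(1+\lambda)$ vanish, and \eqref{eq:45} yields $\theta(x)^T\Psi x\le -c\abs{x}^2$ with no correction needing absorption. Finally, every constant above involving $\lambda$ does so only through $\lambda$ and $\lambda/(1+\lambda)$, which yields the uniformity over $[\overline\lambda/2,3\overline\lambda/2]$ used in part~3.
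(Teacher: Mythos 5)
Your proposal is correct and follows essentially the same route as the paper: the quadratic bound $H(x;\lambda,f_\kappa)\le -c_4\abs{x}^2+C_5$ (the paper's \eqref{eq:23}), obtained by absorbing the $\kappa\abs{a(x)-r(x)\mathbf 1}\abs{x}$ cross terms into the negative $\norm{a(x)-r(x)\ind}^2_{c(x)^{-1}}$ term when $\lambda>0$ and using \eqref{eq:45} with $\Phi(x)=0$ when $\lambda=0$, followed by upper semicontinuity of the integral functionals and Prohorov plus uniform integrability of $\abs{x}$, with constants uniform for $\abs{\lambda-\overline\lambda}\le\overline\lambda/2$. The only differences are cosmetic: you use Young's inequality where the paper uses the Cauchy inequality with a free parameter $\epsilon$, and you compare $\inf_{f\in\mathbb C_0^2}$ with the value at $f_\kappa$ by truncating $f_\kappa$ and applying dominated convergence, whereas the paper works with the convex hull $\mathcal{A}_\kappa$ and convexity of $H(x;\lambda,f)$ in $f$.
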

\begin{proof}
By \eqref{eq:65}, 
\begin{multline}
  \label{eq:15}
  \hat  H(x;\lambda,\nabla f_\kappa(x))=
\frac{\kappa^2}{2}\,
\norm{\Psi x}^2_{T_\lambda(x)}
+\kappa\bl(\theta(x)-\Phi(x)(a(x)-r(x)\mathbf1
)\br)^T\Psi x\\
+\kappa\bl(\bl(-\frac{\lambda}{1+\lambda}\,
\sigma(x)b(x)^Tc(x)^{-1}+\Phi(x)\br)\,(a(x)-r(x)\mathbf1
)\br)^T
\Psi x
\\-\frac{\lambda}{2(1+\lambda)}\,
\norm{a(x)-r(x)\ind}^2_{c(x)^{-1}}\\
+\kappa\bl(-\frac{\lambda^2}{1+\lambda}\,
\beta(x)^Tb(x)^Tc(x)^{-1}b(x)\sigma(x)^T+\lambda\beta(x)^T\sigma(x)^T
\br)\Psi x
\\-\frac{\lambda}{2(1+\lambda)}\,
\bl(2\lambda(a(x)-r(x)\ind)^Tc(x)^{-1}b(x)\beta(x)+
\lambda^2 \norm{b(x)\beta(x)}^2_{c(x)^{-1}}\br)\\
-\lambda(r(x)-\alpha(x)+\frac{1}{2}\,\abs{\beta(x)}^2)
+\frac{1}{2}\,\lambda^2\abs{\beta(x)}^2
\,.
\end{multline}
Let us suppose that $\lambda>0$\,. Since $\Phi(x)$ is a bounded function,
by the Cauchy
inequality, there exists $K_1>0$ such that, for all
 $\epsilon>0$\,,
\begin{multline*}
\bl(\bl(-\frac{\lambda}{1+\lambda}\,
\sigma(x)b(x)^Tc(x)^{-1}+\Phi(x)\br)\,(a(x)-r(x)\mathbf1
)\br)^T
 \Psi x\\\le\frac{1}{2\epsilon^2}\,\norm{a(x)-r(x)\mathbf1
}^2_{c(x)^{-1}}+ K_1\, \frac{\epsilon^2}{2 }
\abs{\Psi x}^2\,.
\end{multline*}
By condition \eqref{eq:45}, if $\kappa$ and $\epsilon$ are small enough, then
\begin{multline*}
  \limsup_{\abs{x}\to\infty}
\frac{1}{\abs{x}^2}
\bl(\kappa K_1\, \frac{\epsilon^2}{2 }
\abs{\Psi x}^2+\frac{\kappa^2}{2}\,x^T\Psi
T_\lambda(x)\Psi x\\
+\kappa\bl(\theta(x)-\Phi(x)(a(x)-r(x)\mathbf1
)\br)^T\Psi x\br)<0\,.
\end{multline*}
Finally, given $\epsilon$\,, $\kappa$ can be chosen such that
\begin{equation*}
  \frac{\kappa}{2\epsilon^2}\,\norm{a(x)-r(x)\mathbf1
}^2_{c(x)^{-1}}
-\frac{\lambda}{2(1+\lambda)}\,
\norm{a(x)-r(x)\ind}^2_{c(x)^{-1}}\le0\,.\end{equation*}
Putting everything together and noting that the terms on the lower
two lines of \eqref{eq:15} grow at most linearly with $\abs{x}$\,,
we conclude that, provided $\kappa$ is small enough,
for suitable $K_2$ and $K_3>0$\,, 
\begin{equation}
  \label{eq:23}
 H(x;\lambda,f_\kappa)\le K_2-K_3\abs{x}^2\,.
\end{equation}
By \eqref{eq:45} and \eqref{eq:15},
the latter inequality can also be  fulfilled  if  $\lambda=0$ and $\Phi(x)=0$\,.

Therefore, on introducing $\Gamma_\delta=  \big\{\nu\in\mathcal{P}:\,
 \int_{\R^l} H(x;\lambda,f_\kappa)\,\nu(dx)\ge \delta\big\}
$\,, where $\delta\in\R$\,, we have that
$\sup_{\nu\in\Gamma_\delta}\int_{\R^l}\abs{x}^2\,\nu(dx)<\infty$\,.
 (As a general matter, we assume that $\sup_\emptyset=-\infty$ and
$\inf_\emptyset =\infty$\,.)
In addition, by $H(x;\lambda,f_\kappa)$ being  continuous
in $x$\, and by \eqref{eq:23},
$\int_{\R^l}H(x;\lambda,f_\kappa)\,\nu(dx)$
 is an upper semicontinuous
function of $\nu$\,, so $\Gamma_\delta$ is a closed set.
Thus, by Prohorov's theorem and Lebesgue's dominated convergence theorem,
  $\Gamma_\delta$   is compact.

By \eqref{eq:65} and
\eqref{eq:59}, the function $H(x;\lambda,f)$ is convex in $f$\,. 
Therefore, if $f\in\mathcal{A}_\kappa$\,, then, by
\eqref{eq:65} and  \eqref{eq:23},
$H(x;\lambda,f)$  is
bounded above 
 by an affine function of $x$\,. Since $H(x;\lambda,f)$ is continuous in $x$\,,
  the function $\int_{\R^l}H(x;\lambda, f)\,\nu(dx)$
is upper semicontinuous in $\nu$\,.
Since $f_\kappa\in\mathcal{A}_\kappa$\,,
we obtain that $\inf_{f\in\mathcal{A}_\kappa}\int_{\R^l}H(x;\lambda, f)\,\nu(dx)$
is $\sup$--compact. 
Since $\inf_{f\in\mathcal{A}_\kappa}\int_{\R^l}H(x;\lambda, f)\,\nu(dx)=
\inf_{f\in\mathbb C_0^2}\int_{\R^l}H(x;\lambda, f)\,\nu(dx)$\,,
the latter function is $\sup$--compact.

An examination of the reasoning that led to
 \eqref{eq:23} reveals that   there exist $\overline K_2$ and
$\overline K_3>0$ such that $H(x;\lambda,f_\kappa)\le
\overline K_2-\overline K_3\abs{x}^2$ if $\abs{\lambda-\overline
  \lambda}\le \overline\lambda/2$\,. Therefore,
\begin{multline*}
  \bigcup_{\{\lambda:\,\abs{\lambda-\overline\lambda}\le\overline\lambda/2\}}
\{\nu\in \mathcal{P}:\,\inf_{f\in\mathbb C^2_0}\int_{\R^l}H(x;\lambda,
f)\,\nu(dx) \ge \delta\}\\\subset
  \bigcup_{\{\lambda:\,\abs{\lambda-\overline\lambda}\le\overline\lambda/2\}}
\{\nu\in \mathcal{P}:\,\int_{\R^l}H(x;\lambda,
f_\kappa)\,\nu(dx) \ge \delta\}\\\subset
\{\nu\in \mathcal{P}:\,\overline K_3\int_{\R^l}\abs{x}^2
\,\nu(dx) \le \overline K_2-\delta\}\,.
\qed\end{multline*}
\end{proof}
For $\nu\in\mathcal{P}$\,, 
we let $\mathbb L^{2}(\R^l,\R^l,\nu(dx))$ 
represent the Hilbert space (of the equivalence classes)
of $\R^l$-valued functions $h(x)$ on
$\R^l$ that are square integrable with respect to $\nu(dx)$ equipped
with the norm $\bl(\int_{\R^l}\abs{h(x)}^2\,\nu(dx)\br)^{1/2}$ and we let
 $\mathbb L^{1,2}_0(\R^l,\R^l,\nu(dx))$ 
represent the closure in 
$\mathbb L^{2}(\R^l,\R^l,\nu(dx))$ 
of the set of gradients of $\mathbb
C_0^1$-functions, with $\mathbb{C}^1_0$ denoting
 the set of real-valued 
compactly supported 
 continuously differentiable
 functions on $\R^l$\,. The space $\mathbb
 L^{1,2}_0(\R^l,\R^l,\nu(dx))$ is a Hilbert space too.
 We will use the notation 
$\nabla f$ for the elements of $\mathbb
L^{1,2}_0(\R^l,\R^l,\nu(dx))$\,, although those functions might not
be proper gradients.
 Let $\hat{\mathbb{P}}$ represent the set of probability densities $m$
  such that $m\in\mathbb{P}$\,,
 $m\in\mathbb{W}^{1,1}_{\text{loc}}(\R^l)$\,,
 and  $\sqrt{m}\in\mathbb{W}^{1,2}(\R^l)$\,, where $\mathbb W$
 is used for denoting a Sobolev space, see, e.g., Adams and Fournier
 \cite{MR56:9247}. 
We note that $\hat{\mathbb P}$ is a convex subset of $\mathbb P$\,.
In the next lemma and below,
the 
divergence of a square matrix is defined as the vector whose entries are the
divergencies of the rows of the matrix.
 \begin{lemma}
   \label{le:density_differ}
If, for   $\nu\in\mathcal{P}$\,,
$\inf_{f\in\mathbb C_0^2}
\int_{\R^l} H(x;\lambda, f)\,\nu(dx)>-\infty$\,,
then $\nu$ admits density  which belongs to
$\hat{\mathbb{P}}$\,.
 \end{lemma}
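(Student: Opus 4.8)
The plan is to show that if $\inf_{f\in\mathbb C_0^2}\int_{\R^l}H(x;\lambda,f)\,\nu(dx)>-\infty$, then the ``bad set'' of directions in which one could drive $G$ to $-\infty$ is empty, and this forces $\nu$ to have a Sobolev density. The starting observation is that, by \eqref{eq:59} and \eqref{eq:88}, for $f\in\mathbb C_0^2$,
\[
\int_{\R^l}H(x;\lambda,f)\,\nu(dx)=\int_{\R^l}\Bl(\hat H(x;\lambda,\nabla f(x))+\frac12\,\text{tr}\bl(\sigma(x)\sigma(x)^T\nabla^2 f(x)\br)\Br)\,\nu(dx),
\]
and by \eqref{eq:65} the integrand is quadratic in the pair $(\nabla f(x),\nabla^2 f(x))$, with the quadratic part in $\nabla f$ governed by the uniformly positive definite matrix $T_\lambda(x)$ (recall the remark after $T_\lambda$ is defined, using part~1 of condition (N) when $\lambda>0$, and \eqref{eq:45} with $\Phi=0$ when $\lambda=0$). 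The key consequence is that $\hat H(x;\lambda,\nabla f(x))\ge \tfrac{c_0}{2}\abs{\nabla f(x)}^2 - C(1+\abs{x})\abs{\nabla f(x)} - C(1+\abs{x}^2)$ for suitable constants $c_0>0$, $C>0$; so the only term that can make the integral escape to $-\infty$ along a sequence $f_j\in\mathbb C_0^2$ is the Hessian term $\tfrac12\int \text{tr}(\sigma\sigma^T\nabla^2 f_j)\,\nu(dx)$.

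Next I would integrate the Hessian term by parts against $\nu$: heuristically,
\[
\frac12\int_{\R^l}\text{tr}\bl(\sigma(x)\sigma(x)^T\nabla^2 f(x)\br)\,\nu(dx)
=-\frac12\int_{\R^l}\bl(\text{div}(\sigma(x)\sigma(x)^T)\br)^T\nabla f(x)\,\nu(dx)
-\frac12\int_{\R^l}\nabla f(x)^T\sigma(x)\sigma(x)^T\,\frac{\nu(dx)}{dx}\cdot(\dots),
\]
which of course only makes sense once $\nu$ is known to have a density. The honest route is: using the finiteness hypothesis together with the coercivity in $\nabla f$ just noted, one shows that the linear functional $\nabla f\mapsto \tfrac12\int \text{tr}(\sigma\sigma^T\nabla^2 f)\,\nu(dx)$ extends to a bounded linear functional on the Hilbert space $\mathbb L^{1,2}_0(\R^l,\R^l,\nu(dx))$ introduced just before the lemma. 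Concretely, if $\int H(x;\lambda,f)\,\nu(dx)\ge -A$ for all $f\in\mathbb C_0^2$, then replacing $f$ by $tf$ and optimizing over $t\in\R$ yields a bound of the form $\bl\lvert\int\text{tr}(\sigma\sigma^T\nabla^2 f)\,\nu(dx)\br\rvert\le C_1\bl(\int\abs{\nabla f}^2\,\nu(dx)\br)^{1/2}+C_2$, and a second scaling/absorption argument removes the additive $C_2$, giving genuine $\mathbb L^{1,2}_0$-boundedness. By the Riesz representation theorem there is $g\in\mathbb L^{1,2}_0(\R^l,\R^l,\nu(dx))$ with $\tfrac12\int\text{tr}(\sigma\sigma^T\nabla^2 f)\,\nu(dx)=-\int g(x)^T\nabla f(x)\,\nu(dx)$ for all $f\in\mathbb C_0^2$.

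Finally I would read off from this identity that $\nu$ solves, in the distributional sense, a stationary Fokker--Planck--type equation $\tfrac12\sum_{ij}\partial_{ij}\bl((\sigma\sigma^T)_{ij}\nu\br)=\text{div}(g\,\nu)$ with $\sigma\sigma^T$ uniformly elliptic, smooth (twice continuously differentiable, by the standing assumptions) and bounded, and with the drift $g\in\mathbb L^2(\nu)$. The regularity theory for invariant measures of such operators — as in Bogachev--Krylov--R\"ockner-type results, which one can quote — then gives that $\nu$ has a density $m$ with $m\in\mathbb W^{1,1}_{\text{loc}}(\R^l)$ and $\sqrt m\in\mathbb W^{1,2}(\R^l)$; together with $\int\abs{x}^2\,\nu(dx)<\infty$ (which holds because, as in Lemma~\ref{le:sup-comp}, the finiteness of $\inf_f\int H\,\nu$ forces a second-moment bound) this is exactly the statement that $m\in\hat{\mathbb P}$. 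The main obstacle is the middle step: justifying rigorously that the Hessian functional is $\mathbb L^{1,2}_0(\nu)$-bounded before one knows $\nu$ has a density — i.e. turning the mere finiteness of $\inf_{f\in\mathbb C_0^2}\int H(x;\lambda,f)\,\nu(dx)$ into a quantitative duality/scaling estimate that uniformly controls the second-order term by the first-order one. Everything after that is standard elliptic regularity for stationary Kolmogorov equations.
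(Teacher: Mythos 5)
Your proposal follows essentially the same route as the paper's proof: the scaling argument (replace $f$ by $tf$, use the quadratic-in-$p$ bound on $\hat H$ and optimize over $t$) to control the Hessian functional by $\bl(\int_{\R^l}\abs{\nabla f}^2\,\nu(dx)\br)^{1/2}$, then Riesz representation in $\mathbb L^{1,2}_0(\R^l,\R^l,\nu(dx))$, and finally Bogachev--Krylov--R\"ockner (plus, in the paper, Agmon) elliptic regularity for the resulting stationary Kolmogorov equation, with $\sqrt{m}\in\mathbb W^{1,2}(\R^l)$ read off from $\nabla g=-\text{div}(\sigma\sigma^Tm)/m$ and the $L^2(\nu)$ bound on $\nabla g$. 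Two harmless quibbles: only the upper quadratic bound on $\hat H(x;\lambda,p)$ in $p$ (boundedness of $T_\lambda$ and the linear growth condition \eqref{eq:42}) is used, not coercivity, and the second moment of $\nu$ is automatic from $\nu\in\mathcal{P}$ rather than something to be derived.
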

 \begin{proof}
   The reasoning follows that of Puhalskii \cite{Puh16}, cf. Lemma
   6.1, Lemma 6.4, and Theorem 6.1 there.
If there exists $\kappa\in\R$ such that
$\int_{\R^l} H(x;\lambda, f)\,\nu(dx)\ge\kappa$\, 
for all $f\in\mathbb C_0^2$\,,
then
by \eqref{eq:59}, for arbitrary $\delta>0$\,,
\begin{equation*}
  -\delta\int_{\R^l}\frac{1}{2}\, \text{tr}\,
\bl({\sigma(x)}{\sigma(x)}^T\nabla^2 f(x)\br)\,
\nu(dx)\ge \kappa-\int_{\R^l}\hat H(x;\lambda,-\delta\nabla f(x))
\,\nu(dx)\,.
\end{equation*}
Dividing both sides by $-\delta$ and minimizing the righthand side
over $\delta$
 obtains with the aid of \eqref{eq:65} and the linear growth condition
 \eqref{eq:42} that there exists constant $K_1>0$
such that, for all $f\in\mathbb C_0^2$\,,
\begin{equation*}
    \int_{\R^l} \text{tr}\,
\bl({\sigma(x)}{\sigma(x)}^T\nabla^2 f(x)\br)\,
\,\nu(dx)\le K_1\Bl(\int_{\R^l}\abs{\nabla f(x)}^2\,\nu(dx)\Br)^{1/2}\,.
\end{equation*}
It follows that the lefthand side extends to a linear functional on
$\mathbb L^{1,2}_0(\R^l,\R^l,\nu(dx))$\,, hence, by the Riesz
representation theorem,
there exists $\nabla g\in\mathbb L^{1,2}_0(\R^l,\R^l,\nu(dx))$ such that
\begin{equation}  \label{eq:46}
  \int_{\R^l} \text{tr}\,
\bl({\sigma(x)}{\sigma(x)}^T\nabla^2 f(x)\br)\,
\,\nu(dx)=\int_{\R^l}\nabla g(x)^T\nabla f(x)\,\nu(dx)
\end{equation}
and 
$  \int_{\R^l}\abs{\nabla g(x)}^2\nu(dx)\le K_1\,.
$
Theorem 2.1 in Bogachev, Krylov, and R\"ockner \cite{BogKryRoc01}
 implies that the
measure $\nu(dx)$ has  density $m(x)$ with respect to  the Lebesgue measure
which belongs to  $\mathbb L_{\text{loc}}^\vartheta(\R^l)$ 
for all  $\vartheta<l/(l-1)$\,. 
It follows  that, for 
arbitrary open ball $S$ in $\R^l$\,,
  there exists  $K_2>0$ such that,
for all  
$ \mathbb{C}_0^2$--functions $f$ with support in $S$\,,
\begin{equation*}
  \abs{\int_{S} \text{tr}\,\bl(\sigma(x)\sigma(x)^T\nabla^2f(x)
\br)\,m(x)\,dx}\le K_2\bl(\int_S \abs{\nabla f(x)}^{2\vartheta/(\vartheta-1)}\,dx\br)^{(\vartheta-1)/(2\vartheta)}\,.
\end{equation*}

\noindent 
By Theorem 6.1 in Agmon \cite{Agm59}, 
the  density $m$  belongs to
$\mathbb{W}_{\text{loc}}^{1,\zeta}(\R^l)$ for all $\zeta< 2l/(2l-1)$.
 Furthermore, $\nabla g(x)=-\text{div}(\sigma(x)\sigma(x)^T m(x))/m(x)$ so that
$\sqrt{m}\in \mathbb W^{1,2}(\R^l)$\,.
\qed \end{proof}
\begin{remark}
  Essentially, \eqref{eq:46} signifies that one can integrate by parts on
  the lefthand side, so $m(x)$ has to be weakly differentiable.
\end{remark}

If $m\in\hat{\mathbb P}$\,, then  integration by parts in
\eqref{eq:62}  obtains that, for $f\in\mathbb C^2_0$\,,
\begin{equation}
  \label{eq:12}
 G(\lambda,f,m)=\hat G(\lambda,\nabla f,m)\,,
\end{equation}
where
\begin{align}
  \label{eq:11}
  \hat G(\lambda,\nabla f,m)
=&\int_{\R^l}\Bl(\hat H(x;\lambda,\nabla f(x))
-\frac{1}{2}\, \nabla f(x)^T
\,\frac{\text{div}\,({\sigma(x)}{\sigma(x)}^T\,
 m(x))}{m(x)}\,
\Br)\,m(x)\,dx\,.
\end{align}
(We assume that $0/0=0$\,.)
We will use \eqref{eq:11} in order to define
$\hat G(\lambda,\nabla f,m)$ when $\nabla f\in\mathbb
L^{1,2}_0(\R^l,\R^l,m(x)\,dx)$\,. 
Furthermore, we will use \eqref{eq:12}
 to  extend the definition of $G(\lambda,f,m)$
 to functions   $f\in\mathbb C^1_\ell$\,. It is noteworthy that if $f\in\mathbb C_\ell^1\cap
\mathbb C^2$\,, then
\begin{multline*}
  \lim_{R\to\infty}\int_{x\in\R^l:\,\abs{x}\le R}
\frac{1}{2}\, \text{tr}\,\bl({\sigma(x)}{\sigma(x)}^T\nabla^2 f(x)\br)\,
\,m(x)\,dx\\=
\int_{\R^l}
-\frac{1}{2}\, \nabla f(x)^T
\,\text{div}\,({\sigma(x)}{\sigma(x)}^T\,
 m(x))\,dx\,.
\end{multline*}
  \begin{lemma}
  \label{le:conc}
The function $\hat H(x;\lambda,p)$ is strictly convex in
$(\lambda,p)\in\R_+\times \R^l$\,.
Given $m\in
\hat{\mathbb P}$\,,
the function $\hat G(\lambda,\nabla f,m)$ is
strictly convex  in $(\lambda,\nabla f)\in\R_+\times {\mathbb
  L}^{1,2}_0(\R^l,\R^l,m(x)\,dx)$\,.
The function $G(\lambda,f,m)$ is 
  convex  in $(\lambda,f)\in\R_+\times \mathbb C_b^2$ and
 $\inf_{f\in\mathbb C_0^2}G(\lambda,f,m)$ and
  $F(\lambda)$  tend to $\infty$
superlinearly, as
$\lambda\to\infty$\,.
The function $J^{\text{s}}_q$ is finite and continuous on $\R$\,.
\end{lemma}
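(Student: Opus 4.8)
The plan is to verify the four assertions of Lemma \ref{le:conc} in the order stated, exploiting the explicit quadratic form of $\hat H(x;\lambda,p)$ computed in \eqref{eq:65}.

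\emph{Strict convexity of $\hat H(x;\lambda,p)$ in $(\lambda,p)$.} First I would rewrite $\hat H(x;\lambda,p)$ from \eqref{eq:65}. It is a quadratic in $p$ with coefficient matrix $\tfrac12 T_\lambda(x)$, which is uniformly positive definite by part~1 of condition (N); the coefficient of $p$ is affine in a bounded-away-from-zero--denominator rational expression of $\lambda$, and the $\lambda$--dependent constant term contains the piece $-\tfrac{\lambda}{2(1+\lambda)}\norm{a(x)-r(x)\ind}^2_{c(x)^{-1}}$. The most transparent route is to go back to the representation before optimization: by \eqref{eq:80},
\begin{equation*}
  \hat H(x;\lambda,p)= -\lambda\sup_{u\in\R^n}\bl(M(u,x)-\tfrac12\lambda\abs{N(u,x)}^2+p^T\sigma(x)N(u,x)\br)+p^T\theta(x)+\tfrac12\abs{\sigma(x)^Tp}^2 .
\end{equation*}
For fixed $u$, the map $(\lambda,p)\mapsto \lambda\bl(M(u,x)-\tfrac12\lambda\abs{N(u,x)}^2+p^T\sigma(x)N(u,x)\br)$ has Hessian in $(\lambda,p)$ equal to a rank-one-plus block form whose $\lambda\lambda$ entry is $-\abs{N(u,x)}^2\le 0$; hence $-\lambda(\dots)$ is convex in $(\lambda,p)$ jointly, and taking the supremum over $u$ preserves convexity. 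Adding $p^T\theta(x)+\tfrac12\abs{\sigma(x)^Tp}^2$, which is convex in $p$ (and uniformly so, since $\sigma(x)\sigma(x)^T$ is uniformly positive definite), gives convexity of $\hat H$. For \emph{strict} convexity in $(\lambda,p)$ jointly I would argue: along any line segment on which $p$ is non-constant, strictness comes from the uniformly positive definite quadratic $\tfrac12 p^T T_\lambda(x)p$; along a segment on which $p$ is constant and only $\lambda$ varies, strictness comes from the term $-\tfrac{\lambda}{2(1+\lambda)}\norm{a(x)-r(x)\ind}^2_{c(x)^{-1}}$ together with $\tfrac12\lambda^2\abs{\beta(x)}^2$ and the other $\lambda$--terms — here I would just compute $\partial^2_\lambda \hat H(x;\lambda,p)$ from \eqref{eq:65} and check it is strictly positive (the $-\lambda/(2(1+\lambda))$ coefficient contributes a strictly positive second derivative, and $\abs{\beta(x)}^2\ge0$), using $\abs{\beta(x)}^2$ bounded and bounded away from zero. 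This is the one place a genuine (if short) computation is unavoidable.

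\emph{Strict convexity of $\hat G(\lambda,\nabla f,m)$.} Fix $m\in\hat{\mathbb P}$. By \eqref{eq:11}, $\hat G(\lambda,\nabla f,m)=\int_{\R^l}\hat H(x;\lambda,\nabla f(x))\,m(x)\,dx - \tfrac12\int_{\R^l}\nabla f(x)^T\,\text{div}(\sigma(x)\sigma(x)^Tm(x))\,dx$. The second integral is \emph{linear} in $\nabla f$ and independent of $\lambda$, hence does not affect strict convexity. The first integral is the $m$--average of the strictly convex functions $\hat H(x;\lambda,\cdot)$ just established; averaging a family of strictly convex functions against a probability density yields a strictly convex function on $\R_+\times \mathbb L^{1,2}_0(\R^l,\R^l,m(x)\,dx)$ — for a non-degenerate segment $(\lambda_0,\nabla f_0)$ to $(\lambda_1,\nabla f_1)$, either $\nabla f_0\ne\nabla f_1$ in $\mathbb L^2(m)$, so they differ on a set of positive $m$--measure and pointwise strict convexity integrates to strict convexity; or $\nabla f_0=\nabla f_1$ $m$--a.e. and $\lambda_0\ne\lambda_1$, where strict convexity in $\lambda$ holds pointwise for every $x$ and again integrates.

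\emph{Convexity of $G(\lambda,f,m)$ in $(\lambda,f)$ and superlinear growth.} Convexity of $G(\lambda,f,m)=\int H(x;\lambda,f)\,m(x)\,dx$ in $(\lambda,f)\in\R_+\times\mathbb C_b^2$ follows because $H(x;\lambda,f)=\hat H(x;\lambda,\nabla f(x))+\tfrac12\text{tr}(\sigma(x)\sigma(x)^T\nabla^2 f(x))$: the second summand is linear in $f$, and $\hat H(x;\lambda,\nabla f(x))$ is the composition of the (jointly) convex $\hat H(x;\lambda,\cdot)$ with the linear map $f\mapsto(\lambda,\nabla f(x))$, hence convex; integrating against $m$ preserves it (this was already noted after \eqref{eq:23}, but I would restate it cleanly). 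For superlinear growth in $\lambda$: from \eqref{eq:65}, with $\nabla f$ fixed, $\hat H(x;\lambda,\nabla f(x))$ contains the term $\tfrac12\lambda^2\abs{\beta(x)}^2$, and $\abs{\beta(x)}^2$ is bounded away from zero, while all other $\lambda$--dependent terms grow at most linearly in $\lambda$ (the rational-in-$\lambda$ factors $\lambda/(1+\lambda)$ are bounded); hence $H(x;\lambda,f)\ge \tfrac12\lambda^2\inf_x\abs{\beta(x)}^2 - C(f)(1+\lambda)(1+\abs{x}^2)$ for a constant $C(f)$ depending on $f$ and the (bounded-derivative, linear-growth) coefficients. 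Taking $f=0$ (or any fixed admissible $f$) and integrating against $m\in\mathbb P$ gives $\inf_{f\in\mathbb C_0^2}G(\lambda,f,m)\le G(\lambda,0,m)$ for the upper direction, and a uniform lower bound $\inf_{f\in\mathbb C_0^2}G(\lambda,f,m)\ge \tfrac12\lambda^2\inf_x\abs{\beta(x)}^2 - C'(1+\lambda)$ once one shows the infimum over $f$ cannot destroy the $\lambda^2$ term — this last point I would get by noting the $\lambda^2\abs{\beta(x)}^2/2$ contribution to $\hat H(x;\lambda,p)$ does not involve $p$ at all, so it survives any choice of $\nabla f$; hence $F(\lambda)=\sup_{m\in\mathbb P}\inf_{f}G(\lambda,f,m)\ge \tfrac12\lambda^2\inf_x\abs{\beta(x)}^2 - C'(1+\lambda)\to\infty$ superlinearly. (In the non-volatile case $\beta\equiv0$ this argument fails, consistent with Remark \ref{re:nonvolat}, where a different optimality regime is described; here we are under the standing assumption that $\abs{\beta(x)}^2$ is bounded away from zero.)

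\emph{Finiteness and continuity of $J^{\text{s}}_q$.} By Lemma \ref{le:sup-comp}, $F(\lambda)$ is finite-valued for every $\lambda$ in the relevant range (and $F(0)\le0$); combined with the just-proved superlinear growth, $-\lambda q - F(\lambda)\to-\infty$ as $\lambda\to\infty$ for every $q$, so $J^{\text{s}}_q=\sup_{\lambda\ge0}(-\lambda q-F(\lambda))$ is attained at a finite $\hat\lambda$ and is finite; it is $\ge -0\cdot q - F(0)=-F(0)\ge0$. For continuity in $q$: $J^{\text{s}}_q$ is a supremum of affine functions of $q$, hence convex and lower semicontinuous on $\R$; since it is finite everywhere (by the coercivity just shown, the sup is over an effectively compact set of $\lambda$'s locally uniformly in $q$), a finite convex function on $\R$ is automatically continuous.

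\emph{Main obstacle.} The only step requiring real care is the \emph{strict} convexity of $\hat H$ in $(\lambda,p)$ \emph{jointly} — one must rule out a degenerate direction in which $p$ stays fixed and only $\lambda$ moves, which forces the short explicit computation of $\partial_\lambda^2\hat H$ from \eqref{eq:65} and the use of $\abs{\beta(x)}^2$ bounded below (and of part~1 of condition (N) to keep $T_\lambda(x)$ and the rational factors well-behaved). Everything else is bookkeeping: linearity of the divergence term, stability of convexity under averaging and composition with linear maps, and the elementary fact that finite convex functions on $\R$ are continuous.
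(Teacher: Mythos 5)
Your write-up gets the easy parts right (linearity of the divergence term, convexity of $G$, averaging, and the ``finite convex function on $\R$ is continuous'' argument for $J^{\text{s}}_q$), but the two analytically substantive claims of the lemma are not actually proved. First, joint convexity of $\hat H(x;\lambda,p)$: your assertion that, for fixed $u$, the map $(\lambda,p)\mapsto-\lambda\bl(M(u,x)-\tfrac{\lambda}{2}\abs{N(u,x)}^2+p^T\sigma(x)N(u,x)\br)$ is jointly convex is false. Take $M=0$, $\abs{N}=1$, $\sigma N=e$: the map is $\tfrac{\lambda^2}{2}-\lambda p^Te$, whose Hessian has the block form with $(1,-e^T)$ in the first row and $(-e,0)$ below, which is indefinite (the quadratic form at $(z,y)$ is $z^2-2z\,e^Ty$). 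Moreover, after absorbing the factor $-\lambda$, $\hat H$ is an \emph{infimum} over $u$ of these functions, not a supremum, so even pointwise convexity in $u$ would not be preserved. Your fallback --- strictness along segments where $p$ moves via $\tfrac12 p^TT_\lambda(x)p$, plus a check of $\partial^2_\lambda\hat H>0$ at fixed $p$ --- is also insufficient: separate convexity in $p$ and in $\lambda$ does not give joint (strict) convexity; one must dominate the mixed derivative, i.e., show $(\hat H_{\lambda p}y)^2<\hat H_{\lambda\lambda}\,y^T\hat H_{pp}y$. This is exactly what the paper does: it computes the full Hessian and applies a Cauchy--Schwarz inequality for the inner product $e_1\cdot e_2=v_1^Tc(x)^{-1}v_2+w_1^Tw_2$, with \emph{strictness} coming from part 2 of condition (N) (namely, $Q_1(x)\beta(x)$ is not a scalar multiple of $Q_1(x)\sigma(x)^Ty$). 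Your argument never controls $\hat H_{\lambda p}$ and never invokes part 2 of (N), so the central statement of the lemma remains unproven.

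Second, the superlinear growth of $F$: your claim that ``all other $\lambda$--dependent terms grow at most linearly in $\lambda$'' contradicts \eqref{eq:65}, which contains the $p$--free term $-\tfrac{\lambda}{2(1+\lambda)}\norm{a(x)-r(x)\ind+\lambda b(x)\beta(x)}^2_{c(x)^{-1}}\approx-\tfrac{\lambda^2}{2}\norm{b(x)\beta(x)}^2_{c(x)^{-1}}$, and linear-in-$p$ coefficients of order $\lambda$, so the infimum over $\nabla f$ (taken of size of order $\lambda$) also acts at order $\lambda^2$. After minimizing over $p$, the surviving coefficient of $\lambda^2/2$ is $\norm{\beta(x)}^2_{Q_2(x)}$, not $\abs{\beta(x)}^2$; your bound $\inf_fG(\lambda,f,m)\ge\tfrac12\lambda^2\inf_x\abs{\beta(x)}^2-C'(1+\lambda)$ is quantitatively false whenever $\beta(x)$ has a component in the range of $b(x)^T$ or $\sigma(x)^T$, and superlinear growth would genuinely fail if $Q_2(x)\beta(x)$ degenerated. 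The positivity of $\norm{\beta(x)}^2_{Q_2(x)}$ is precisely what part 2 of condition (N) supplies, and the paper derives $\liminf_{\lambda\to\infty}F(\lambda)/\lambda^2>0$ by computing the limit $\lim_{\lambda\to\infty}\lambda^{-2}\inf_p\bl(\hat H(x;\lambda,p)-\tfrac12 p^T\mathrm{div}(\sigma\sigma^Tm)/m\br)=\tfrac12\norm{\beta(x)}^2_{Q_2(x)}$ and applying Fatou's lemma. So both of the delicate steps need to be redone along those lines; the rest of your argument is consistent with the paper.
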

\begin{proof} 
The Hessian matrix of $\hat H(x;\lambda,p)$
with respect to $(\lambda,p)$ is given by
\begin{align*}
\hat H_{pp}(x;\lambda,p)&=\frac{1}{1+\lambda}\,
\sigma(x) b(x)^Tc(x)^{-1}b(x)\sigma(x)^T
+\sigma(x)Q_1(x)\sigma(x)^T\,,\\
  \hat H_{\lambda\lambda}(x;\lambda,p)&=\frac{1}{(1+\lambda)^3}\,
\norm{a(x)-r(x)\mathbf1+b(x)\sigma(x)^Tp-b(x)\beta(x)}^2_{c(x)^{-1}}
\\&+\beta(x)^TQ_1(x)\beta(x)\,,\\
\hat H_{\lambda p}(x;\lambda,p)&=-\frac{1}{(1+\lambda)^2}\,
\bl(a(x)-r(x)\mathbf1+b(x)\sigma(x)^Tp-b(x)\beta(x)\br)^T\\&c(x)^{-1}b(x)\sigma(x)^T+\beta(x)^TQ_1(x)\sigma(x)^T\,.
\end{align*}
We show that it is positive definite. More specifically, we prove that 
for all $ z\in\R$ and
$y\in\R^l$ such that $ z^2+\abs{y}^2\not=0$\,,
\begin{equation*}
   z^2\hat H_{\lambda\lambda}(x;\lambda,p)
+y^T\hat H_{pp}(x;\lambda,p) y+
2 z\hat H_{\lambda p}(x;\lambda,p)y>0\,.
\end{equation*}
Since $\hat H_{pp}(x;\lambda,p)$ is a   positive definite matrix by
condition (N),
the latter inequality  holds when $ z=0$\,. Assuming $ z\not=0$\,, we
need to show that
\begin{equation}
  \label{eq:104}
  \hat H_{\lambda\lambda}(x;\lambda,p)
+y^T\hat H_{pp}(x;\lambda,p) y+
2\hat H_{\lambda p}(x;\lambda,p)y>0\,.
\end{equation}
Let, for   $e_1=(v_1(x),w_1(x))$ 
and $e_2=(v_2(x),w_2(x))$\,,
where $v_1(x)\in\R^n\,,w_1(x)\in\R^k\,,v_2(x)\in\R^n\,,
w_2(x)\in\R^k$\,, and $x\in\R^l$\,,  the
inner product be defined by
$e_1\cdot e_2=v_1(x)^Tc(x)^{-1}v_2(x)+
w_1(x)^Tw_2(x)$\,.
By the Cauchy-Schwarz inequality, applied to
$e_1=\bl((1+\lambda)^{-3/2}(a(x)-r(x)\mathbf1+b(x)\sigma(x)^Tp-b(x)\beta(x)),
Q_1(x)\beta(x)\br)$ and
$e_2=((1+\lambda)^{-1/2}b(x)\sigma(x)^Ty,Q_1(x)\sigma(x)^Ty)$\,,
we have that $
  (\hat H_{\lambda p}(x;\lambda,p)y)^2
<y^T\hat H_{pp}(x;\lambda,p) y
\hat H_{\lambda\lambda}(x;\lambda,p)\,,
$ with the inequality being strict because, by condition (N), 
 $Q_1(x)\beta(x)$ is not a scalar multiple of 
$Q_1(x)\sigma(x)^Ty$\,.
Thus, \eqref{eq:104} holds, so 
the function $\hat H(x;\lambda,p)$ is strictly convex
in $(\lambda,p)$ on 
$\R_+\times \R^l$ for all $x\in\R^l$\,.
By  \eqref{eq:11},
 $\hat G(\lambda,\nabla f,m)$ is strictly convex in
$(\lambda,\nabla f)$\,, provided $m\in\hat{\mathbb P}$\,,
and by \eqref{eq:13},
 $G(\lambda,f,m)$ is    convex in
$(\lambda,f)$\,.
Thus, $\inf_{f\in\mathbb C_0^2}G(\lambda,f,m)$ is convex in
$\lambda$\,, so $F(\lambda)$ is convex and, hence, continuous.

By \eqref{eq:65} and \eqref{eq:81}, as $\lambda\to\infty$\,,
\begin{equation*}
  \lim_{\lambda\to\infty}\frac{1}{\lambda^2}\,\inf_{p\in\R^l}
\bl( \hat H(x;\lambda,p)-\frac{1}{2}\,p^T
\,\frac{\text{div}\,(\sigma(x)\sigma(x)^T m(x))}{m(x)}\br)=
\frac{1}{2}\,\norm{\beta(x)}^2_{Q_2(x)}\,.
\end{equation*}
The latter quantity  being positive
 by condition (N)
implies, by \eqref{eq:12} and Fatou's lemma, that 
\begin{equation*}
  \liminf_{\lambda\to\infty}\frac{1}{\lambda^2}\,\inf_{f\in\mathbb C_0^2} 
G(\lambda,f,m)>0\,.
\end{equation*}
Hence, by \eqref{eq:29},
$  \liminf_{\lambda\to\infty}F(\lambda)/\lambda^2>0$\,.
Since $F(\lambda)\to\infty$ superlinearly, as $\lambda\to\infty$\,,
the supremum on the right of \eqref{eq:36} can be taken over the same compact
set of $\lambda$ when the values of $q$ come from a bounded set,
implying
 that $J^{\text{s}}_q$ is finite and continuous. \qed
\end{proof}
\begin{remark}
\label{re:deg}  If $\beta(x)=0$\,, then part 2 of Condition (N) does not
  hold but the proof of Lemma \ref{le:conc} 
still goes through except for the last property
  in that
   $F(\lambda)/\lambda^2$ tends to
  zero as $\lambda\to\infty$\,.
Still,
\begin{equation*}
\liminf_{\lambda\to\infty}\frac{1}{\lambda}\,\inf_{f\in\mathbb C_0^2} 
G(\lambda,f,m)\ge -\int_{\R^l}(r(x)-\alpha(x))m(x)\,dx\,,
\end{equation*}
so that
\begin{equation*}
    \liminf_{\lambda\to\infty}\frac{F(\lambda)}{\lambda}\ge
-\inf_{x\in\R^l}(r(x)-\alpha(x))\,.
\end{equation*}
Consequently, 
if $\inf_{x\in\R^l}(r(x)-\alpha(x))<q$\,, then $-\lambda q-F(\lambda)$ tends to
$-\infty$ as $\lambda\to\infty$, so $\sup_{\lambda\ge0}(-\lambda
q-F(\lambda))$ is attained and $J^{\text{s}}_q$ is finite and continuous. That might not be the case if 
$\inf_{x\in\R^l}(r(x)-\alpha(x))\ge q$\,. For instance, if the functions 
$a(x)$\,, $r(x)$\,, $b(x)$\,, and $\sigma(x)$ are constant,
 $\alpha(x)=0$\,, and $q$ is
small enough, then the derivative of
$-\lambda q-F(\lambda)$  is positive for all $\lambda$ and
$J^{\text{s}}_q=\infty$\,.
As a result, $J^{\text{s}}_q$ might fail to be continuous at 
$\inf_{x\in\R^l}(r(x)-\alpha(x))$\,, although it is rightcontinuous
regardless. \end{remark}
\begin{remark}
The convexity property of $\hat   H(x;\lambda,p)$ could be
expected because,  by \eqref{eq:80},
\begin{equation*}
  \hat   H(x;\lambda,p)= -\sup_{u\in\R^n}\bl(\lambda  M(u,x)
- \frac{1}{2}\,\abs{-\lambda
N(u,x)+\sigma(x)^Tp}^2\br)+
p^T\,\theta(x)\,.
\end{equation*}
\end{remark}
By \eqref{eq:11},
 \eqref{eq:29}, \eqref{eq:62}, and
by the set of the gradients of functions from  $\mathbb C^2_0$
being dense in  ${\mathbb
  L}^{1,2}_0(\R^l,\R^l,m(x)\,dx)$\,, 
\begin{align}
  \label{eq:96}
  F(\lambda)=&  \sup_{m\in \hat{\mathbb P}}  
\inf_{\nabla f\in {\mathbb
               L}^{1,2}_0(\R^l,\R^l,m(x)\,dx)}
\hat G(\lambda,\nabla f,m)\,.
\end{align} 
 Since the matrix $T_\lambda(x)$ is uniformly positive definite,
by \eqref{eq:11}, \eqref{eq:80}, and \eqref{eq:65},
$\hat G(\lambda,\nabla f,m)$
tends
to infinity as the $\mathbb L^2(\R^l,\R^l,m(x)\,dx)$--norm 
of $\nabla f$ tends to infinity. 
Since $\hat G(\lambda,\nabla f,m)$ is strictly convex in $\nabla f$\,,
 the infimum 
in \eqref{eq:96} is attained
at a unique point,
 see, e.g.,
Proposition 1.2 on p.35 in Ekeland and Temam \cite{EkeTem76}.
Furthermore, 
since 
\begin{multline}
  \label{eq:7}
   \sup_{m\in \hat{\mathbb P}}  
\inf_{\nabla f\in {\mathbb
               L}^{1,2}_0(\R^l,\R^l,m(x)\,dx)}
\hat G(\lambda,\nabla f,m)
=\sup_{\nu\in\mathcal{P}}  
\inf_{f\in \mathbb C_0^2}
\int_{\R^l}H(x;\lambda, f)\,\nu(dx)\\=
\sup_{\nu\in\mathcal{P}}  
\inf_{f\in\mathcal{A}_\kappa}
\int_{\R^l}H(x;\lambda, f)\,\nu(dx)
\end{multline}
and, for $\lambda>0$\,, 
 by Lemma \ref{le:sup-comp}, Lemma \ref{le:density_differ} and
 \eqref{eq:12},
 the function
$\inf_{f\in \mathcal{A}_\kappa}
\int_{\R^l}H(x;\lambda, f)\,\nu(dx)
$ is  $\sup$--compact in $\nu$\,, we have that the supremum
in \eqref{eq:96} is attained too, provided $\lambda>0$\,. 

\begin{lemma}
  \label{le:minmax}
Suppose that either $\lambda>0$ or that condition \eqref{eq:45} holds
with $\Phi(x)=0$\,. Then there exists $(f^\lambda,m^\lambda)\in
(\mathbb C^1_\ell\cap \mathbb C^2)\times
\hat{ \mathbb
  P}$ that is  a saddle point of $G(\lambda,f,m)$
as a function of $(f,m)$ so that
\begin{multline}
\label{eq:118}  \inf_{f\in\mathbb C^1_\ell\cap \mathbb C^2}\sup_{m\in\hat{ \mathbb
  P}}G(\lambda,f,m)=\inf_{f\in\mathbb C_b^2}
\sup_{m\in \mathbb
  P}G(\lambda,f,m)=\sup_{m\in \mathbb
  P}\inf_{f\in\mathbb C_0^2}G(\lambda,f,m)
\\=
\sup_{m\in \hat{\mathbb
  P}}\inf_{f\in\mathbb C_\ell^1\cap \mathbb
C^2}G(\lambda,f,m)=F(\lambda)\,,
\end{multline}
with 
the infimum on the leftmost side being attained at $f^\lambda$ and the
supremum on the rightmost side being attained at $m^\lambda$\,.
The function $f^\lambda$ satisfies
the ergodic Bellman equation 
\begin{equation}
  \label{eq:89}
H(x;\lambda,f)=F(\lambda),
\end{equation}
for all $x\in\R^l$\,, 
and $m^\lambda(x)$ is the invariant density of a diffusion:
\begin{multline}
  \label{eq:27}
             \int_{\R^l}
\bl(\nabla  h(x)^T(-\lambda \sigma(x) N( u^\lambda(x),x)+\theta(x)
+\sigma(x)\sigma(x)^T\nabla  f^\lambda(x))
\\+\frac{1}{2}\, \text{tr}\,(\sigma(x)\sigma(x)^T
\,\nabla^2  h(x))
\br)\,  m^\lambda(x)\,dx=0\,,
\end{multline}
for all $h\in\mathbb C_0^2$\,, where
\begin{equation}
  \label{eq:16}
             u^\lambda(x)=\frac{1}{1+\lambda}\,c(x)^{-1}\bl(a(x)-r(x)\mathbf1
+\lambda b(x)\beta(x)+b(x)\sigma(x)^T\nabla
  f^\lambda(x)\br)\,.
\end{equation}
The density $m^\lambda(x)$ may be chosen positive, bounded and  of class $\mathbb
 C^1$\,. The functions $\nabla f^\lambda(x)$ and $m^\lambda(x)$ are specified
uniquely.

In addition, the function 
 $F(\lambda)$ is  strictly convex and
 is continuously differentiable, provided $\lambda>0$\,, and the righthand
 derivative at $\lambda\ge0$ is given by
 \begin{multline}
   \label{eq:58}
     F'_+(\lambda)=\int_{\R^l}\bl(-  M(u^\lambda(x),x)
+\lambda
\abs{N(u^\lambda(x),x)}^2- \nabla f^\lambda(x)^T\sigma(x) 
N(u^\lambda(x),x)\br)
\\ m^\lambda(x)\,dx\,.
\end{multline}
\end{lemma}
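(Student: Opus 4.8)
The plan is to produce the saddle point explicitly rather than to argue abstractly: take $m^\lambda$ from the variational problem \eqref{eq:96} and $f^\lambda$ from the theory of ergodic Bellman equations, verify directly that $(f^\lambda,m^\lambda)$ is a saddle point of $G(\lambda,\cdot,\cdot)$ with value $F(\lambda)$, and then read off the chain \eqref{eq:118}, the equations \eqref{eq:89}, \eqref{eq:27}, \eqref{eq:16}, and the properties of $F$.

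First I would invoke \eqref{eq:96} together with Lemma~\ref{le:sup-comp} and Lemma~\ref{le:density_differ}: the supremum in \eqref{eq:96} is attained at some $m^\lambda\in\hat{\mathbb P}$ (for $\lambda>0$ directly, for $\lambda=0$ using \eqref{eq:45} with $\Phi(x)=0$), and for this $m^\lambda$ the infimum of $\hat G(\lambda,\cdot,m^\lambda)$ over $\mathbb L^{1,2}_0(\R^l,\R^l,m^\lambda(x)\,dx)$ is attained at a unique $\nabla f^\lambda$. The first-order condition of the inner minimization shows at once that $m^\lambda$ is a stationary density of the diffusion appearing in \eqref{eq:27}, so that \eqref{eq:27} and \eqref{eq:16} hold. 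The delicate point is the regularity of $\nabla f^\lambda$: since $T_\lambda(x)$ is uniformly positive definite and $\hat H(x;\lambda,\cdot)$ is convex (Lemma~\ref{le:conc}), \eqref{eq:89} is a uniformly elliptic ergodic Bellman equation of the kind studied by Kaise and Sheu \cite{KaiShe06} (see also Fleming and Sheu \cite{FleShe02} and Ichihara \cite{Ich11}), the recurrence hypotheses for their existence and uniqueness results being supplied by \eqref{eq:45} with $\Phi(x)=0$ when $\lambda=0$ and, when $\lambda>0$, by the coercivity that part~2 of condition~(N) contributes, as in the derivation of \eqref{eq:23}. Their theory yields a solution $f^\lambda\in\mathbb C^2$ of $H(x;\lambda,f^\lambda)=\chi$ holding for all $x\in\R^l$, with $\nabla f^\lambda$ of linear growth (so $f^\lambda\in\mathbb C^1_\ell\cap\mathbb C^2$) and with $\chi$ and $\nabla f^\lambda$ uniquely determined; by uniqueness of the inner minimizer this $\nabla f^\lambda$ is the one obtained above, and by elliptic regularity applied to \eqref{eq:27} the density $m^\lambda$ may be taken positive, bounded and of class $\mathbb C^1$ and is the unique invariant density of \eqref{eq:27}.

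Next I would verify that $(f^\lambda,m^\lambda)$ is a saddle point with value $\chi$ and that $\chi=F(\lambda)$. By the extension \eqref{eq:12}, \eqref{eq:88}, $G(\lambda,f^\lambda,m)=\int_{\R^l}H(x;\lambda,f^\lambda)\,m(x)\,dx=\chi$ for every $m\in\mathbb P$, so $m^\lambda$ is a maximizer. For any $f\in\mathbb C^1_\ell\cap\mathbb C^2$, convexity of $\hat H(x;\lambda,\cdot)$ together with the identity $\nabla_p\hat H(x;\lambda,\nabla f^\lambda(x))=-\lambda\sigma(x)N(u^\lambda(x),x)+\theta(x)+\sigma(x)\sigma(x)^T\nabla f^\lambda(x)$ gives $H(x;\lambda,f)-\chi\ge \nabla(f-f^\lambda)(x)^T(-\lambda\sigma(x) N(u^\lambda(x),x)+\theta(x)+\sigma(x)\sigma(x)^T\nabla f^\lambda(x))+\tfrac12\,\text{tr}\,(\sigma(x)\sigma(x)^T\nabla^2(f-f^\lambda)(x))$; integrating against $m^\lambda$ and using that the right-hand side integrates to zero by \eqref{eq:27} (the linear growth of $f-f^\lambda$ and $m^\lambda\in\hat{\mathbb P}$ justifying the integration by parts) yields $G(\lambda,f,m^\lambda)\ge\chi=G(\lambda,f^\lambda,m^\lambda)$, so $f^\lambda$ is a minimizer. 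Hence $\chi=\sup_{m\in\mathbb P}\inf_{f\in\mathbb C^1_\ell\cap\mathbb C^2}G(\lambda,f,m)$, which is $\le F(\lambda)$ because $\mathbb C_0^2\subset\mathbb C^1_\ell\cap\mathbb C^2$ enlarges the inner infimum in \eqref{eq:29}, and is $\ge F(\lambda)$ by evaluating at $m^\lambda$: a cut-off approximation of linear-growth gradients by compactly supported ones in $\mathbb L^2(\R^l,\R^l,m^\lambda(x)\,dx)$, legitimate because $\int_{\R^l}\abs{x}^2m^\lambda(x)\,dx<\infty$, together with the density of $\mathbb C_0^2$-gradients in $\mathbb L^{1,2}_0$, shows $\inf_{f\in\mathbb C^1_\ell\cap\mathbb C^2}G(\lambda,f,m^\lambda)=\inf_{\nabla f}\hat G(\lambda,\nabla f,m^\lambda)=F(\lambda)$. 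Thus $\chi=F(\lambda)$, which with Step~1 establishes \eqref{eq:89}, \eqref{eq:27} and \eqref{eq:16}. The two remaining equalities in \eqref{eq:118} follow: each is $\le F(\lambda)$ by inserting $f^\lambda$, respectively $m^\lambda$, and $\ge F(\lambda)$ from the saddle, the only nonroutine point being $\inf_{f\in\mathbb C_b^2}\sup_{m\in\mathbb P}G=F(\lambda)$, for which I would apply Sion's minimax theorem on $\mathbb C_b^2\times\mathcal K$, with $\mathcal K\subset\hat{\mathbb P}$ a compact convex set carrying the supremum (provided by Lemma~\ref{le:sup-comp}), and note that on $\hat{\mathbb P}$ the infima over $\mathbb C_b^2$ and over $\mathbb C_0^2$ agree by the same cut-off and integration-by-parts argument.

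Finally, strict convexity of $F$ follows from the strict joint convexity of $\hat G(\lambda,\nabla f,m)$ in $(\lambda,\nabla f)$ (Lemma~\ref{le:conc}) together with attainment of the inner infimum: the infimal projection in $\nabla f$ of a strictly jointly convex function is strictly convex in $\lambda$, and a supremum over $m$ of such functions that is attained at each point is again strictly convex. For continuous differentiability on $(0,\infty)$, $F$ is convex, hence has one-sided derivatives, and a Danskin-type envelope argument at the saddle point, whose data $(\nabla f^\lambda,m^\lambda)$ is unique, shows that $F'_+(\lambda)$ and $F'_-(\lambda)$ both equal the partial derivative of $\hat G$ in its explicit $\lambda$-dependence evaluated at the saddle, the inner optimization over $u$ contributing nothing to first order by the envelope theorem; carrying out that differentiation gives the formula \eqref{eq:58}, valid for $F'_+$ at $\lambda\ge0$ and equal to $F'$ for $\lambda>0$. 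I expect the principal obstacle to be the second half of Step~1: securing a classical $\mathbb C^2$ solution of the ergodic Bellman equation with gradient of linear growth and ergodic constant exactly $F(\lambda)$, that is, checking that the structural hypotheses of Kaise and Sheu \cite{KaiShe06} hold in the two regimes and matching their ergodic constant with $F(\lambda)$ by the two-sided bound above; a secondary burden is the passage among the four pairs of function and measure classes in \eqref{eq:118}, in particular the Sion-minimax step and the Dynkin/integration-by-parts justifications under the linear-growth conditions.
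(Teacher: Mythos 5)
Your overall skeleton is close to the paper's (obtain $m^\lambda$ from the variational problem \eqref{eq:96}, obtain $f^\lambda$ from the Kaise--Sheu theory, verify the saddle point, then a Danskin-type argument for \eqref{eq:58}), but the order in which you assemble it hides a genuine circularity at exactly the point you flag as the ``principal obstacle''. Your argument that the ergodic constant $\chi$ of the Bellman solution equals $F(\lambda)$ runs as follows: $f^\lambda$ minimizes $G(\lambda,\cdot,m^\lambda)$ by the convexity inequality integrated against $m^\lambda$, which needs \eqref{eq:27} with the drift built from the \emph{classical} gradient $\nabla f^\lambda$; but what Step~1 gives you is \eqref{eq:27} with the drift built from the abstract $\mathbb L^{1,2}_0$ minimizer, and your identification of the two gradients is justified only ``by uniqueness of the inner minimizer'', which requires knowing that the classical gradient attains the infimum, i.e.\ that $\chi\le F(\lambda)$ --- the very inequality you are trying to prove. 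The cut-off embedding gives $F(\lambda)\le\chi$ non-circularly, but the reverse inequality cannot be extracted from the saddle-point verification as you have arranged it. The paper breaks this loop by first proving the minimax identity $F(\lambda)=\inf_{f\in\mathbb C_b^2}\sup_{x}H(x;\lambda,f)$ (equation \eqref{eq:28}, via the Aubin--Ekeland minimax theorem on $\mathcal A_\kappa\times\mathcal P$ using Lemma~\ref{le:sup-comp}) and only then constructing the Bellman solution at level exactly $F(\lambda)$: one solves $H(x;\lambda,f)=F(\lambda)+\epsilon$ using a near-optimal $f_1$ from \eqref{eq:28} as comparison and lets $\epsilon\to0$ with the Kaise--Sheu gradient bounds, so the ergodic constant is $F(\lambda)$ by construction rather than by a posteriori matching. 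Your Sion step would serve the same purpose, but it must come first, not as a ``remaining equality'' derived after the saddle; mere citation of Kaise--Sheu for existence at ``the'' critical constant does not by itself identify that constant with $F(\lambda)$ (solutions of $H=\Lambda$ exist for a half-line of $\Lambda$'s, cf.\ Remark~\ref{re:diffusion}).

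A secondary looseness is the differentiability of $F$: an ``envelope argument at the saddle point'' with unique data is not enough to get two-sided differentiability of a sup--inf. The paper first shows, via the first-order condition \eqref{eq:32}, the implicit function theorem in $\mathbb L^{1,2}_0$ and uniform positive definiteness of $T_\lambda$, that for each fixed $m\in\hat{\mathbb P}$ the inner minimizer varies $C^1$ in $\lambda$, so $\inf_{\nabla f}\hat G(\lambda,\nabla f,m)$ is differentiable with derivative equal to the explicit partial derivative; it then reduces the outer supremum to the relatively compact family \eqref{eq:10} and applies the Ioffe--Tihomirov theorem (compact index set, unique maximizer) to differentiate the supremum. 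Without the compactness reduction and the smooth dependence of the inner minimizer on $\lambda$, your Danskin step does not go through as stated. The strict-convexity argument and the direct saddle verification by convexity plus \eqref{eq:27} are fine once the identification above is secured.
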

\begin{proof}
Since
$\int_{\R^l}
 H(x;\lambda,f)\,\nu(dx)$ is an upper semicontinuous 
and concave function of
 $\nu\in\mathcal{P}$\,,
  for all
$f\in\mathcal{A}_\kappa$\,,
is convex in $f\in\mathcal{A}_\kappa$\,, and
$\int_{\R^l}
 H(x;\lambda,f_\kappa)\,\nu(dx)$ is $\sup$--compact in
 $\nu$ by Lemma \ref{le:sup-comp},
an application of Theorem
7 on p.319 in Aubin and Ekeland \cite{AubEke84} yields 
\begin{multline*}
  \sup_{\nu\in \mathcal{P}}\inf_{f\in\mathbb C_b^2} 
\int_{\R^l} H(x;\lambda,f)\,\nu(dx)=
\sup_{\nu\in \mathcal{P}}\inf_{f\in\mathcal{A}_\kappa} 
\int_{\R^l} H(x;\lambda,f)\,\nu(dx)\\=
\inf_{f\in\mathcal{A}_\kappa}\sup_{\nu\in\mathcal{P}}
\int_{\R^l} H(x;\lambda,f)\,\nu(dx)\ge
\inf_{f\in\mathbb C_b^2}\sup_{\nu\in\mathcal{P}}
\int_{\R^l} H(x;\lambda,f)\,\nu(dx)\,,
\end{multline*} 
 the supremum on the leftmost side being attained at some $\nu^\lambda$\,.
It follows that \begin{equation*}
    \sup_{\nu\in \mathcal{P}}\inf_{f\in\mathbb C_b^2} 
\int_{\R^l} H(x;\lambda,f)\,\nu(dx)=
\inf_{f\in\mathbb C_b^2}\sup_{\nu\in\mathcal{P}}
\int_{\R^l} H(x;\lambda,f)\,\nu(dx)\,.
\end{equation*}

By Lemma \ref{le:density_differ},
\begin{equation*}
  \sup_{\nu\in\mathcal{P}}\inf_{f\in\mathbb C_b^2} 
\int_{\R^l} H(x;\lambda,f)\,\nu(dx)=
\sup_{m\in\mathbb P}\inf_{f\in\mathbb C_0^2} 
\int_{\R^l} H(x;\lambda,f)\,m(x)\,dx
\end{equation*}
and $\nu^\lambda(dx)=m^\lambda(x)\,dx$\,, where
$m^\lambda\in\hat{\mathbb P}$\,,
and, by an approximation argument,
\begin{equation*}
  \sup_{\nu\in\mathcal{P}}
 \int_{\R^l} H(x;\lambda,f)\,\nu(dx)=
\sup_{m\in\mathbb P}
 \int_{\R^l} H(x;\lambda,f)\,m(x)\, dx\,.
\end{equation*}
We obtain that
\begin{equation*}
  \inf_{f\in\mathbb C_b^2}\sup_{m\in \mathbb
  P}G(\lambda,f,m)=\sup_{m\in \mathbb
  P}\inf_{f\in\mathbb C_0^2}G(\lambda,f,m)=\inf_{f\in\mathbb
  C_0^2}G(\lambda,f,m^\lambda)\,. 
\end{equation*}
Therefore, on applying Lemma \ref{le:density_differ},
\begin{multline*}  
  \inf_{f\in\mathbb C^1_\ell\cap \mathbb C^2}\sup_{m\in\hat{ \mathbb
  P}}G(\lambda,f,m)\le  \inf_{f\in\mathbb C_b^2}\sup_{m\in \mathbb
  P}G(\lambda,f,m)=\sup_{m\in \mathbb
  P}\inf_{f\in\mathbb C_0^2}G(\lambda,f,m)\\=
\sup_{m\in \hat{\mathbb
  P}}\inf_{f\in\mathbb C_\ell^1\cap \mathbb C^2}G(\lambda,f,m)\,.
\end{multline*}
The leftmost side not being 
 less than the rightmost side obtains \eqref{eq:118}.

By \eqref{eq:118},
 \begin{equation}
   \label{eq:28}
   F(\lambda)    =\inf_{f\in \mathbb C_b^2}\sup_{x\in\R^l}
 H(x;\lambda, f)\,.
 \end{equation}
(The righthand side is finite: take $f=f_\kappa$\,.)
Applying the reasoning on
 pp.289--294 in Kaise and Sheu \cite{KaiShe06}, one can see that, for
 arbitrary $\epsilon>0$\,,
 there exists $\mathbb C^2$-function $ f^{(\epsilon)}$ 
such that, for all $x\in\R^l$\,,
$    H(x;{\lambda},  f^{(\epsilon)})
=F(\lambda)+\epsilon$\,.
Considering that some details are omitted 
in Kaise and Sheu \cite{KaiShe06}, we give 
an outline of the proof, following the lead of Ichihara \cite{Ich11}.
By the definition of the infimum,
 there exists function $f_1\in \mathbb C_b^2$ such that
$ H(x;\lambda, f_1)\le
F(\lambda)+\epsilon$ 
for all $x$\,. 
Given  open ball $S$\,, centered at the origin,
 by Theorem 6.14 on p.107 in Gilbarg and
Trudinger \cite{GilTru83}, there exists 
$\mathbb C^2$--solution $f_2$ of the linear
elliptic boundary value problem
$ H(x;\lambda, f) -(1/2)\,\nabla f(x)^TT_\lambda(x)\nabla f(x)
=F(\lambda)+\epsilon$ when $x\in S$ and
  $f(x)=f_\kappa(x)$
when $x\in\partial S$\,, with $\partial S$ standing for
the boundary of $S$\,. 
Therefore, 
$ H(x;\lambda, f_2)\ge
F(\lambda)+\epsilon$ in $S$\,.
By Theorem 8.4 on p.302 of Chapter 4 in 
Ladyzhenskaya and Ural'tseva \cite{LadUra68}, for any ball $S'$
contained in $S$ and centered at the origin, there exists 
$\mathbb C^{2}$--solution $f_{S'}$ to
the boundary value problem $ H(x;\lambda, f)=
F(\lambda)+\epsilon$ in $S'$ and $f(x)=f_\kappa(x)$ on $\partial S'$\,.
Since $f_{S'}$ is a solution $f$ of the boundary value problem 
$(1/2)\text{tr}\,(\sigma(x)\sigma(x)^T\nabla^2f(x))=
-\hat H(x;\lambda,\nabla f_{S'}(x))+F(\lambda)+\epsilon$ when $x\in
S'$ and
$f(x)=f_{S'}(x)$ when $x\in \partial S'$\,, we have by Theorem 6.17 on
p.109 of Gilbarg and Trudinger \cite{GilTru83} that 
$f_{S'}(x)$ is thrice continuously differentiable.
Letting the radius of $S'$ (and that of
$S$) go to infinity, we have, by p.294 in Kaise and Sheu
\cite{KaiShe06}, see also Proposition 3.2 in Ichihara \cite{Ich11},
  that the $f_{S'}$ converge locally uniformly and  in 
$\mathbb
W^{1,2}_{\text{loc}}(\R^l)$ to $f^{(\epsilon)}$ which is a weak solution to
$H(x;\lambda,f)=F(\lambda)+\epsilon$\,. 
Furthermore, by Lemma 2.4 in Kaise and Sheu \cite{KaiShe06}, the 
$\mathbb W^{1,\infty}(S'')$--norms of the $f_{S'}$ are uniformly bounded over
balls $S'$ for any fixed ball $S''$ contained in the
$S'$\,. Therefore, $f^{(\epsilon)}$ belongs to $\mathbb
W^{1,\infty}_{\text{loc}}(\R^l)$\,.
By Theorem 6.4 on p.284 in Ladyzhenskaya and Uraltseva
\cite{LadUra68},
$f^{(\epsilon)}$ is thrice continuously differentiable.

As in Theorem 4.2 in Kaise and Sheu
 \cite{KaiShe06}, by using the gradient bound in Lemma 2.4 there, we
 have that the  $f^{(\epsilon)}$ converge along a
 subsequence  uniformly on compact
 sets  as $\epsilon\to0$ to
 a $\mathbb C^2$--solution of $    H(x;{\lambda},  f)
=F(\lambda)$\,.
 That solution, which we denote by $f^\lambda$\,, 
delivers  infimum on the leftmost side of
 \eqref{eq:118}
and satisfies the Bellman equation, with the gradient
 $\nabla f^\lambda(x)$ obeying the linear growth condition,
 see Remark 2.5 in Kaise and Sheu \cite{KaiShe06}.

Since $f^\lambda$ delivers   infimum on the leftmost side of
 \eqref{eq:118} and $m^\lambda$ delivers  supremum on the rightmost side,
 by Proposition
2.156 on p.104 in Bonnans and Shapiro \cite{BonSha00}
or by Proposition 1.2 on p.167 in Ekeland and Temam \cite{EkeTem76},
the pair $(f^\lambda,m^\lambda)$ is  a saddle point of
$G(\lambda,f,m)$ as a function of $(f,m)$\,.
Equation \eqref{eq:27} expresses the requirement of the directional
derivative of $G(\lambda,f,m)$ with respect to $f$ in the direction
$h$ being equal to zero at $(f^\lambda,m^\lambda)$\,, cf. Proposition
1.6 on p.169 in Ekeland and Temam \cite{EkeTem76}.
In some more detail,
 either 
by Theorem 4.13 on p.273 or by Theorem 4.17 on p.276 in Bonnans and Shapiro
\cite{BonSha00}, or by a direct calculation,
the function 
 $\sup_{u\in\R^n}\bl(  M(u,x)
-\lambda
\abs{N(u,x)}^2/2+ p^T\sigma(x) N(u,x)\br)$\,, where $p\in\R^l$\,,
  is differentiable in $p$
with the directional derivative at $p=\nabla f(x)$ in  direction
$\tilde p\in\R^l$ being 
$(\sigma(x) N( \tilde  u(x),x))^T\tilde p$ and with $\tilde u(x)$
representing the point at which 
$\sup_{u\in\R^n}\bl(  M(u,x)
-\lambda
\abs{N(u,x)}^2/2+ \nabla f(x)^T\sigma(x) N(u,x)\br)$
 is attained and being evaluated analogously to \eqref{eq:69}.
It follows from \eqref{eq:62} and dominated convergence that $G(\lambda, f, m)$ 
 has a directional derivative in $f$ in the direction $h$ given by
 \begin{multline}
   \label{eq:98}
   \int_{\R^l}\Bl(\nabla h(x)^T\bl( -\lambda (\sigma(x) N(\tilde u(x),x))+
\theta(x)
+\sigma(x)\sigma(x)\nabla f(x)\br)\\
+\frac{1}{2}\, \text{tr}\,\bl({\sigma(x)}{\sigma(x)}^T\nabla^2 h(x)\br)\,
\Br)\,m(x)\,dx\,.
 \end{multline}
 The function  $\nabla
f^\lambda$ is specified uniquely because
 $ G(\lambda,f,m)$ is a
strictly convex function of $\nabla f$ by \eqref{eq:12},
 cf. Proposition 1.5 on p.169
in Ekeland and Temam \cite{EkeTem76}, and the density
 $m^\lambda$ is specified
uniquely because, by  $\nabla  f^\lambda(x)$
having at most linear  growth in $x$ and by Example 1.7.11
 in Bogachev, Krylov, and R\"ockner \cite{BogKryRoc}, \eqref{eq:27} is
 uniquely solvable for $m^\lambda$\,. 
The function $m^\lambda(x)$ is positive, bounded and is of class $\mathbb
 C^1$ by Corollaries 2.10 and 2.11 in Bogachev, Krylov, and R\"ockner 
\cite{BogKryRoc01} and by
Agmon \cite{Agm59}, see also Theorem 4.1(ii) and p.413 in 
Metafune, Pallardi, and Rhandi \cite{MetPalRha05}.

Since $(f^\lambda,m^\lambda)$ is a saddle point of
$G(\lambda,f,m)$\,, with $m^\lambda$ being specified uniquely,
the suprema in \eqref{eq:96} and on the rightmost side
of \eqref{eq:7} are attained at unique $\nu$ which is $\nu^\lambda$\,.
Both the infimum and supremum in \eqref{eq:96} 
being attained when $\lambda>0$ and  the function $\hat 
G(\lambda,\nabla f,m)$ being
strictly convex in $(\lambda,\nabla f)$\,, the function
$F(\lambda)$ is strictly convex.

We address  the differentiability of $F(\lambda)$\,. By
  Theorem 4.13 on p.273
 in Bonnans and Shapiro
 \cite{BonSha00}   and dominated convergence, we  have on recalling
\eqref{eq:80}
and \eqref{eq:11}, that if $m\in\hat{\mathbb P}$ and $\nabla
f\in\mathbb L^{1,2}_0(\R^l,\R^l,m(x)\,dx)$\,, then
$ \hat G(\lambda,\nabla f,m)$  is differentiable in $\lambda$
 at $\lambda>0$ with the
derivative being equal to
\begin{equation*}
\int_{\R^l}\bl( -  M(\tilde u(x),x)
+\lambda
\abs{N(\tilde u(x),x)}^2- \nabla f(x)^T\sigma(x) N(\tilde u(x),x)\br)
 m(x)\,dx  
\end{equation*}
 and with $\tilde u(x)$ being defined
earlier in this proof.
Furthermore,
$\inf_{\nabla f\in {\mathbb
               L}^{1,2}_0(\R^l,\R^l,m(x)\,dx)}
\hat G(\lambda,\nabla f,m)$
is attained at  $\nabla \tilde f$  such that
the Fr\'echet derivative of $\hat G(\lambda,\nabla f,m)$ with respect
to $\nabla f$\, equals zero, so, by \eqref{eq:80}, \eqref{eq:65}, and
\eqref{eq:98}, 
\begin{multline}
  \label{eq:32}
               \int_{\R^l}
\nabla  h(x)^T\Bl(-\lambda \sigma(x)\Bl(b(x)^T
\frac{1}{1+\lambda}\,c(x)^{-1}\bl(a(x)-r(x)\mathbf1
+\lambda b(x)\beta(x)\br)-\beta(x)\Br) \\+\theta(x)
-\frac{1}{2}\,\frac{ \text{div}\,(\sigma(x)\sigma(x)^T
m(x))}{m(x)}
+T_\lambda(x)
\nabla  \tilde f(x)
\Br)\,  m(x)\,dx=0\,,
\end{multline}
for all $\nabla h\in\mathbb L^{1,2}_0(\R^l,\R^l,m(x)\,dx)$\,.
The mapping that associates with an element $(\lambda,\nabla f)$
of $\R_+\times\mathbb L^{1,2}_0(\R^l,\R^l,m(x)\,dx)$ 
the linear functional  on $\mathbb L^{1,2}_0(\R^l,\R^l,m(x)\,dx)$
that is defined, for  $\nabla h\in\mathbb
L^{1,2}_0(\R^l,\R^l,m(x)\,dx)$\,, by the
lefthand side of \eqref{eq:32}  with $\nabla f$ as $\nabla \tilde
f$\,, is continuously differentiable.
Since the matrix
$T_\lambda(x)$
 is uniformly positive definite, by the Riesz representation theorem,
 the partial derivative with respect to $\nabla f$ 
 is a linear homeomorphism. By the implicit mapping theorem,
$\nabla \tilde f$ is continuously
differentiable in $\lambda$\,, see, e.g., Theorem 2.1 on p.364 in 
Lang \cite{Lan93}. Since
the Fr\'echet derivative of $\hat G(\lambda,\nabla f,m)$ with respect
to $\nabla f$ equals
zero at $\nabla\tilde f$\,, we obtain by
the chain rule that $\hat G(\lambda,\nabla\tilde f,m)$ is continuously
differentiable in $\lambda$ with the full $\lambda$--derivative 
 being  equal to its partial
$\lambda$--derivative
evaluated at $(\lambda,\nabla \tilde f,m)$\,, i.e.,
$\int_{\R^l}\bl( -  M(\tilde u(x),x)
+\lambda
\abs{N(\tilde u(x),x)}^2- \nabla\tilde f(x)^T\sigma(x) N(\tilde u(x),x)\br)
 m(x)\,dx$\,.
 By \eqref{eq:12},
$\inf_{\nabla f\in {\mathbb
               L}^{1,2}_0(\R^l,\R^l,m(x)\,dx)}
\hat G(\lambda,\nabla f,m)=\inf_{f\in\mathbb C^2_0}
\int_{\R^l}H(x;\lambda,f)\,m(x)\,dx$\,, so the latter function is
differentiable in $\lambda$ too, with the same derivative.
This proves \eqref{eq:58} when $\lambda>0$\,.
The case where $\lambda=0$ is obtained by an application of Theorem 24.1
on p.227 in Rockafellar \cite{Rock}.

Given $\overline\lambda>0$\,,
if  $\lambda$ is close enough to
$\overline\lambda$\,, then
\begin{equation}
  \label{eq:10}
  \sup_{\nu\in\mathcal{P}}  
\inf_{f\in \mathbb C_0^2}
\int_{\R^l}H(x;\lambda, f)\,\nu(dx)
=\sup_{\nu\in\mathcal{P}_{\overline\lambda}}  
\inf_{f\in \mathbb C_0^2}
\int_{\R^l}H(x;\lambda, f)\,\nu(dx)\,,
  \end{equation}
where $\mathcal{P}_{\overline\lambda}=
\cup_{\{\breve\lambda:\,\abs{\breve\lambda-\overline\lambda}\le\overline\lambda/2\}}
\{\nu\in \mathcal{P}:\,\inf_{f\in\mathbb C^2_0}\int_{\R^l}H(x;\breve\lambda,
f)\,\nu(dx) \ge F(\overline\lambda)-1\}$\,.
By Lemma \ref{le:density_differ},
the measures from  $\mathcal{P}_{\overline\lambda}$
possess densities which belong to $\hat{\mathbb P}$\,. Hence,  the
function in the supremum on the right of \eqref{eq:10}
is differentiable in $\lambda$ for $\nu\in\mathcal{P}_{\overline\lambda}$\,.
It is also convex in $\lambda$ and upper semicontinuous in $\nu$\,.
By Lemma \ref{le:sup-comp}, the set $\mathcal{P}_{\overline\lambda}$ is
relatively  compact.
In addition,  
$\nu(dx)=m^{\overline\lambda}(x)\,dx$ is the only point at which the supremum on
the lefthand side of \eqref{eq:10} is attained for
$\lambda=\overline\lambda$\,.  
 Theorem 3 on p.201 in Ioffe and
Tihomirov \cite{IofTih79} enables us to conclude that
the righthand side of \eqref{eq:10} is differentiable in $
\lambda$ at $\overline\lambda$\,, 
with the
derivative  being equal to 
\begin{equation*}
  \int_{\R^l}\bl( -  M(u^{\overline\lambda}(x),x)
+\lambda
\abs{N(u^{\overline\lambda}(x),x)}^2
- \nabla f^{\overline\lambda}(x)^T\sigma(x) N(u^{\overline\lambda}(x),x)\br)
 m^{\overline\lambda}(x)\,dx\,.
\end{equation*}
By \eqref{eq:96} and \eqref{eq:7}, this is  true of $F(\lambda)$ too.
\qed
\end{proof}
\begin{remark}
\label{re:diffusion} 
By Theorem 6.4 on p.284 in Ladyzhenskaya and Uraltseva
\cite{LadUra68}, $f^\lambda$ is thrice continuously differentiable.
 Furthermore, by \eqref{eq:28},
$F(\lambda)$
is the smallest $\Lambda$ such that there exists $\mathbb
C^2$--function $f$
 that satisfies
 the equation 
$ H(x;\lambda, f)=\Lambda
$\,, for all $x\in\R^l$\,. 
One can thus infer  the  existence of $m^\lambda(x)$ satisfying
\eqref{eq:27} from the results of
  Kaise and Sheu \cite{KaiShe06} and
Ichihara \cite{Ich11}.
Besides, we have that 
\begin{equation*}
   F(\lambda)    =\inf_{f\in \mathbb C_b^2}\sup_{x\in\R^l}
 H(x;\lambda, f)=
\inf_{f\in \mathbb C_\ell^1}\sup_{x\in\R^l}
 H(x;\lambda, f)=\inf_{f\in \mathbb C^2}\sup_{x\in\R^l}
 H(x;\lambda, f)\,.
\end{equation*}
  \end{remark}

According to Lemma \ref{le:conc} and Lemma \ref{le:minmax},
$\sup_{\lambda\ge0}(-\lambda q-F(\lambda))$
is attained at unique $\lambda$ which is  denoted by $\hat \lambda$\,.
\begin{lemma}
  \label{le:saddle_3}
Suppose that either $\hat\lambda>0$ or condition \eqref{eq:45} holds
with $\Phi(x)=0$\,. Then there exists  $(\hat f,\hat m)\in
 (\mathbb C^1_\ell\cap\mathbb C^2)\times\hat{ \mathbb P}$
such that 
$((\hat\lambda,\hat f),\hat m)$
is a saddle point of 
the function $\lambda q+G(\lambda,f,m)$
in $\R_+\times (\mathbb C^1_\ell\cap\mathbb C^2)\times\hat{ \mathbb
  P}$\,.
Furthermore,  $\nabla\hat f$ and $\hat m$ are specified 
 uniquely. The density $\hat m$ is positive, bounded, and is of class $\mathbb C^1$\,.
Equations \eqref{eq:103'}, \eqref{eq:104'}, and \eqref{eq:69} hold,
 and
\begin{equation}
  \label{eq:2}
    \int_{\R^l}
\bl(  M(\hat u(x),x)
-\hat\lambda\abs{N(\hat u(x),x)}^2+\nabla\hat f(x)^T\sigma(x) N(\hat u(x),x)
\br)\,\hat m(x)\,dx\le q\,,
\end{equation}
with $\hat u(x)$ being defined by \eqref{eq:69}. If $\hat\lambda>0$\,, then
equality prevails in \eqref{eq:2}.

\end{lemma}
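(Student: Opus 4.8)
The plan is to assemble the saddle point from the $\lambda$--section already produced by Lemma~\ref{le:minmax} and to control the $\lambda$--direction through the first--order optimality of $\hat\lambda$ in $\sup_{\lambda\ge0}(-\lambda q-F(\lambda))$ together with the derivative formula \eqref{eq:58}. First I would put $\hat f=f^{\hat\lambda}$ and $\hat m=m^{\hat\lambda}$; this is legitimate because either $\hat\lambda>0$ or \eqref{eq:45} holds with $\Phi(x)=0$, so Lemma~\ref{le:minmax} is available at $\lambda=\hat\lambda$. That lemma already delivers the stated regularity and positivity of $\hat m$, the fact that $\nabla\hat f$ and $\hat m$ are the unique saddle--point components of $G(\hat\lambda,\cdot,\cdot)$ on $(\mathbb C^1_\ell\cap\mathbb C^2)\times\hat{\mathbb P}$, the ergodic Bellman equation \eqref{eq:103'} (which is \eqref{eq:89} at $\lambda=\hat\lambda$), the invariance identity \eqref{eq:104'} (which is \eqref{eq:27}), the representation \eqref{eq:69} of $\hat u$ (which is \eqref{eq:16}), and $G(\hat\lambda,\hat f,\hat m)=F(\hat\lambda)$. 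The saddle inequality in the $m$--variable, $G(\hat\lambda,\hat f,m)\le G(\hat\lambda,\hat f,\hat m)$ for $m\in\hat{\mathbb P}$, is then immediate; adding $\hat\lambda q$ to both sides gives one half of the required property.

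The substance is the other half: $\hat\lambda q+G(\hat\lambda,\hat f,\hat m)\le\lambda q+G(\lambda,f,\hat m)$ for all $(\lambda,f)\in\R_+\times(\mathbb C^1_\ell\cap\mathbb C^2)$. Since $\hat m\in\hat{\mathbb P}$, by \eqref{eq:12} the right--hand side equals $\lambda q+\hat G(\lambda,\nabla f,\hat m)$, which is bounded below by $\varphi(\lambda):=\lambda q+\inf_{\nabla g\in\mathbb L^{1,2}_0(\R^l,\R^l,\hat m(x)\,dx)}\hat G(\lambda,\nabla g,\hat m)$; so it suffices to prove that the convex function $\varphi$ (convex by Lemma~\ref{le:conc}) attains its minimum over $\R_+$ at $\hat\lambda$ and that $\varphi(\hat\lambda)=\hat\lambda q+G(\hat\lambda,\hat f,\hat m)$. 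The latter identity holds because the saddle property of $(\hat f,\hat m)$, \eqref{eq:12}, and the density of the gradients of $\mathbb C^2_0$--functions in $\mathbb L^{1,2}_0(\R^l,\R^l,\hat m(x)\,dx)$ give $\inf_{\nabla g}\hat G(\hat\lambda,\nabla g,\hat m)=\inf_{f\in\mathbb C_0^2}G(\hat\lambda,f,\hat m)=G(\hat\lambda,\hat f,\hat m)$, and the unique minimizer of $\hat G(\hat\lambda,\cdot,\hat m)$, which exists by strict convexity, is $\nabla\hat f$. For the minimization in $\lambda$ I would reuse the differentiability argument of the proof of Lemma~\ref{le:minmax}: with $m=\hat m$ held fixed, $\lambda\mapsto\inf_{\nabla g}\hat G(\lambda,\nabla g,\hat m)$ is differentiable for $\lambda>0$, its derivative being the partial $\lambda$--derivative of $\hat G$ at the inner minimizer $\nabla\tilde f(\lambda)$ (the case $\lambda=0$ being handled by Theorem~24.1 in Rockafellar~\cite{Rock}). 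At $\lambda=\hat\lambda$ the inner minimizer is $\nabla\hat f$ and the maximizing control is $\hat u$, so this derivative equals $\int_{\R^l}\bl(-M(\hat u(x),x)+\hat\lambda\abs{N(\hat u(x),x)}^2-\nabla\hat f(x)^T\sigma(x)N(\hat u(x),x)\br)\hat m(x)\,dx$, which by \eqref{eq:58} is $F'_+(\hat\lambda)$. Hence $\varphi'_+(\hat\lambda)=q+F'_+(\hat\lambda)$, and $\varphi$ is differentiable at $\hat\lambda$ when $\hat\lambda>0$.

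Since $\hat\lambda$ minimizes the convex function $\lambda\mapsto\lambda q+F(\lambda)$ over $\R_+$, we have $q+F'_+(\hat\lambda)\ge0$, with equality when $\hat\lambda>0$; therefore $\varphi'_+(\hat\lambda)\ge0$, and convexity of $\varphi$ (together with $\varphi'(\hat\lambda)=0$ in the interior case) forces $\hat\lambda$ to minimize $\varphi$. Chaining the inequalities yields $\hat\lambda q+G(\hat\lambda,\hat f,\hat m)=\varphi(\hat\lambda)\le\varphi(\lambda)\le\lambda q+G(\lambda,f,\hat m)$, completing the saddle--point verification. Global uniqueness of $\nabla\hat f$ and $\hat m$ then follows from Lemma~\ref{le:minmax} after observing that the $\lambda$--component of any saddle point of $\lambda q+G(\lambda,f,m)$ must equal $\hat\lambda$: at such a point $(f,m)$ is a saddle point of $G(\lambda,\cdot,\cdot)$, so the common value is $\lambda q+F(\lambda)$, which is minimal only at $\hat\lambda$ by strict convexity of $F$ and the uniqueness in \eqref{eq:36}. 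Finally, the left--hand side of \eqref{eq:2} equals $-F'_+(\hat\lambda)$ by \eqref{eq:58}, so \eqref{eq:2} is precisely the inequality $q+F'_+(\hat\lambda)\ge0$ already established, and it becomes an equality when $\hat\lambda>0$. I expect the identity $\varphi'_+(\hat\lambda)=q+F'_+(\hat\lambda)$ — that is, the legitimacy of interchanging differentiation in $\lambda$ with the inner minimization over $\nabla g$ and the inner maximization over $u$, especially the one--sided version at $\lambda=0$ — to be the only real obstacle, and it essentially repeats the computations already carried out for $F$ in Lemma~\ref{le:minmax}.
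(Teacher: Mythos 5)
Your proposal is correct and follows essentially the paper's own route: it sets $\hat f=f^{\hat\lambda}$, $\hat m=m^{\hat\lambda}$ from Lemma \ref{le:minmax} and combines convexity of $\lambda q+G(\lambda,f,m)$ in $(\lambda,f)$ and concavity in $m$ with the first--order condition $q+F'_+(\hat\lambda)\ge0$ (equality when $\hat\lambda>0$) and the derivative formula \eqref{eq:58}, which is exactly how the paper also obtains \eqref{eq:2}. The only difference is presentational: where you verify the $\lambda$--direction saddle inequality by hand via the auxiliary convex function $\varphi$ and a repetition of the differentiability computation from Lemma \ref{le:minmax}, the paper passes directly from the Euler--Lagrange equations \eqref{eq:103'}, \eqref{eq:104'}, \eqref{eq:69} to the saddle--point property by citing Proposition 1.7 on p.170 in Ekeland and Temam \cite{EkeTem76}.
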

\begin{proof}
We let $\hat f=f^{\hat\lambda}$ and $\hat m=m^{\hat\lambda}$\,.
Equations \eqref{eq:103'}, \eqref{eq:104'}, and \eqref{eq:69} hold by Lemma
\ref{le:minmax}.  
Since $G(\lambda,f,m)$ is convex in $(\lambda,f)$ and is concave in $m$\,,
those equations imply that
$((\hat\lambda,\hat f),\hat m)$ is a saddle point of 
$\lambda q+G(\lambda,f,m)$\,, cf. Proposition 1.7 on p.170 in Ekeland
and Temam \cite{EkeTem76}. 
The pair $(\nabla \hat f,\hat m)$ is specified uniquely by  Lemma
\ref{le:minmax}. 
The inequality in  \eqref{eq:2} follows from \eqref{eq:58} and the
fact that $q+F_+'(\hat\lambda)\ge0$\,. If $\hat\lambda>0$\,, then the
latter inequality is equality.

 \qed\end{proof}
\begin{remark}
  \label{re:saddle}
By \eqref{eq:29} and \eqref{eq:36},
under the hypotheses of the theorem, $J^{\text{s}}_q=-\hat\lambda q-
G(\hat\lambda,\hat f,\hat m)$\,.
\end{remark}
 \begin{lemma}
  \label{le:condition}
Suppose that  
\eqref{eq:45} holds with $\Phi(x)=0$\,.
Suppose that either $\hat\lambda=0$ or
there exist $\varrho>0$\,, $C_1>0$ and $C_2>0$ such that
\eqref{eq:31} holds for all $x\in\R^l$\,.
Then \eqref{eq:22} holds.
\end{lemma}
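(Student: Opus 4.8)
The plan is to exhibit, for each large $\tau$, a test function $f_\tau\in\mathbb C^2$ with $\sup_{x\in\R^l}\breve H(x;\hat\lambda,f_\tau,\hat u^\tau)\le F(\hat\lambda)+o_\tau(1)$ as $\tau\to\infty$; by the remark following Theorem~\ref{the:risk-sens} it is immaterial that \eqref{eq:22} uses $\inf$ over $\mathbb C^2_b$ rather than over $\mathbb C^2$. If $\hat\lambda=0$ this is immediate and no limit is needed, since $\breve H(x;0,f,v)=\nabla f(x)^T\theta(x)+\tfrac12\abs{\sigma(x)^T\nabla f(x)}^2+\tfrac12\,\text{tr}\bl(\sigma(x)\sigma(x)^T\nabla^2 f(x)\br)=H(x;0,f)$ does not depend on the feedback $v$, whence by \eqref{eq:28} $\inf_{f\in\mathbb C^2}\sup_x\breve H(x;0,f,\hat u^\tau)=\inf_{f\in\mathbb C^2}\sup_xH(x;0,f)=F(0)$.

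Assume $\hat\lambda>0$ and \eqref{eq:31}. Take $f_\tau=\hat f+g_\tau$ with $g_\tau\in\mathbb C^2$ vanishing, together with $\nabla g_\tau$ and $\nabla^2 g_\tau$, on $\{\abs x\le\tau\}$. Exploiting that $\breve H(x;\hat\lambda,f,v)$ is, for fixed $f$, a strictly convex quadratic in $v$ with minimizer $\hat u$ when $f=\hat f$ (so that $\breve H(x;\hat\lambda,\hat f,\hat u)=H(x;\hat\lambda,\hat f)$) and that $\hat H(x;\hat\lambda,p)$ is quadratic in $p$, one derives from the ergodic Bellman equation \eqref{eq:103'} the identities, valid for all $x$,
\begin{align*}
\breve H(x;\hat\lambda,\hat f+g_\tau,\hat u)&=F(\hat\lambda)+\hat\theta(x)^T\nabla g_\tau(x)+\tfrac12\abs{\sigma(x)^T\nabla g_\tau(x)}^2+\tfrac12\,\text{tr}\bl(\sigma(x)\sigma(x)^T\nabla^2 g_\tau(x)\br),\\
\breve H(x;\hat\lambda,\hat f+g_\tau,0)&=F(\hat\lambda)+\tfrac{\hat\lambda}{2(1+\hat\lambda)}\norm{a(x)-r(x)\ind+\hat\lambda b(x)\beta(x)+b(x)\sigma(x)^T\nabla\hat f(x)}^2_{c(x)^{-1}}\\
&\quad{}+\hat\theta_0(x)^T\nabla g_\tau(x)+\tfrac12\abs{\sigma(x)^T\nabla g_\tau(x)}^2+\tfrac12\,\text{tr}\bl(\sigma(x)\sigma(x)^T\nabla^2 g_\tau(x)\br),
\end{align*}
where $\hat\theta(x)=\theta(x)-\hat\lambda\sigma(x)N(\hat u(x),x)+\sigma(x)\sigma(x)^T\nabla\hat f(x)$ is the drift appearing in \eqref{eq:104'} and $\hat\theta_0(x)=\theta(x)+\hat\lambda\sigma(x)\beta(x)+\sigma(x)\sigma(x)^T\nabla\hat f(x)$. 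Since $\hat u^\tau=\hat u$ and $g_\tau\equiv0$ on $\{\abs x\le\tau\}$, the first identity gives $\breve H(x;\hat\lambda,f_\tau,\hat u^\tau)=F(\hat\lambda)$ there, so everything reduces to choosing $g_\tau$ so that the right side of the second identity is $\le F(\hat\lambda)+o_\tau(1)$ on $\{\abs x>\tau\}$. That this ought to be possible is visible probabilistically: with $g_\tau\equiv0$ the right side is $F(\hat\lambda)$ plus a nonnegative, at most quadratic function of $x$, and its integral over $\{\abs x>\tau\}$ against $\hat m$ tends to $0$ since $\hat m$ has a finite second moment; the task is to upgrade this integral decay to a pointwise tail bound.

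For the tail I would let $g_\tau$ grow on $\{\abs x>\tau\}$ so as to convert the stability of $\theta$ into a large negative term: by \eqref{eq:45} with $\Phi(x)=0$ there are $\epsilon_0>0$ and $R_0$ with $\theta(x)^T\Psi x\le-\epsilon_0\abs x^2$ for $\abs x\ge R_0$, so $\hat\theta_0(x)^T\nabla g_\tau(x)$ acquires a dominating negative contribution for a suitably growing $g_\tau$; a natural candidate is $g_\tau(x)=\eta\,\chi(\abs x/\tau)\,\norm x^2_\Psi$ with a small fixed $\eta>0$ and a smooth nondecreasing cutoff $\chi$ equal to $0$ on $(-\infty,1]$ and to $1$ on $[2,\infty)$. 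To make the stability gain actually dominate the quadratic error term in the second identity, the growth of $\nabla\hat f$ must be controlled, and this is where \eqref{eq:31} enters: differentiating the Bellman equation \eqref{eq:103'} and using \eqref{eq:31} — in the subquadratic form noted in the remark following Theorem~\ref{the:risk-sens} — bounds $\norm{b(x)\sigma(x)^T\nabla\hat f(x)}^2_{c(x)^{-1}}$, and hence $\abs{\sigma(x)^T\nabla\hat f(x)}^2$ and the quadratic error term, by $\norm{a(x)-r(x)\ind}^2_{c(x)^{-1}}$ up to $o(\abs x^2)$, the gap $\varrho$ leaving room to absorb $\tfrac12\abs{\sigma^T\nabla g_\tau}^2$ and $\tfrac12\,\text{tr}(\sigma\sigma^T\nabla^2 g_\tau)$ once $\eta$ is small. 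On the transition annulus $\{\tau\le\abs x\le2\tau\}$, the bounds $\abs{\nabla\chi(\abs x/\tau)}=O(1/\tau)$ and $\abs{\nabla^2\chi(\abs x/\tau)}=O(1/\tau^2)$ keep those last two terms with a coefficient of order $1/\tau$, so the right side stays $\le F(\hat\lambda)+O(1/\tau)$ there; on $\{\abs x\ge2\tau\}$, where $g_\tau$ is a fixed multiple of $\norm x^2_\Psi$, the stability term prevails once $\abs x\ge R_0$. Carrying out this verification — reconciling, uniformly in $\tau$, the genuinely quadratic error term with the stability gain while keeping the $g_\tau$-generated terms small — is the main obstacle, and it is exactly here that both \eqref{eq:45} with $\Phi(x)=0$ and \eqref{eq:31} do their work. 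Once it is in place, $\sup_x\breve H(x;\hat\lambda,f_\tau,\hat u^\tau)\le F(\hat\lambda)+C/\tau$ for all $\tau$ beyond some threshold, and \eqref{eq:22} follows.
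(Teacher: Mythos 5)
Your treatment of the case $\hat\lambda=0$ is fine, but for $\hat\lambda>0$ the constructive tail argument has a genuine gap, and it is precisely at the step you yourself flag as ``the main obstacle''. Your second identity shows that on $\{\abs{x}>\tau\}$, where $\hat u^\tau=0$, the quantity $\breve H(x;\hat\lambda,\hat f+g_\tau,0)$ exceeds $F(\hat\lambda)$ by the nonnegative term $\tfrac{\hat\lambda}{2(1+\hat\lambda)}\norm{a(x)-r(x)\ind+\hat\lambda b(x)\beta(x)+b(x)\sigma(x)^T\nabla\hat f(x)}^2_{c(x)^{-1}}$, which in general grows quadratically in $\abs{x}$: it contains $\norm{a(x)-r(x)\ind}^2_{c(x)^{-1}}$ with a plus sign, and the compensating negative term in \eqref{eq:34} carries the cutoff $\tau$ and vanishes exactly on this region. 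Condition \eqref{eq:31} cannot remove it: it controls only $\norm{b(x)\sigma(x)^T\nabla\hat f(x)}^2_{c(x)^{-1}}$ (not $\abs{\sigma(x)^T\nabla\hat f(x)}^2$ --- the component of $\sigma(x)^T\nabla\hat f(x)$ in the null space of $b(x)$ is untouched, and by condition (N) that component is genuinely there), and it controls it \emph{by} $\norm{a(x)-r(x)\ind}^2_{c(x)^{-1}}$, which is the very term left uncompensated. Nor can the bump $g_\tau=\eta\,\chi(\abs{x}/\tau)\norm{x}^2_\Psi$ save the day: with $\eta$ small the gain $\hat\theta_0(x)^T\nabla g_\tau(x)$ together with the error terms is $O(\eta\abs{x}^2)$ and cannot dominate a quadratic of fixed size, while with $\eta$ of order one you would need a Riccati--type inequality tying the stability margin in \eqref{eq:45} to $\sigma\sigma^T$, $\nabla\hat f$ and the quadratic growth of $\norm{a-r\ind}^2_{c^{-1}}$ --- an extra structural hypothesis of the kind (cf.\ \eqref{eq:39} or Nagai's (2.21), (2.25)) that the lemma is specifically designed to avoid. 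As written, $\sup_{x}\breve H(x;\hat\lambda,f_\tau,\hat u^\tau)$ will typically be $+\infty$, so no pointwise subsolution of the proposed form exists under the stated hypotheses, and the heuristic that the $\hat m$--integral of the tail term is small cannot be upgraded to a pointwise bound.

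The paper's proof avoids any pointwise construction. Using the minimax identity \eqref{eq:18}, it rewrites $\inf_{f\in\mathbb C^2_b}\sup_x\breve H(x;\hat\lambda,f,\hat u^\tau)$ as $\sup_{\nu\in\mathcal P}\inf_{f\in\mathbb C_0^2}\int\breve H(x;\hat\lambda,f,\hat u^\tau)\,\nu(dx)$ (see \eqref{eq:20}); a Lyapunov function $f_{\breve\kappa}$ gives $\breve H(x;\hat\lambda,f_{\breve\kappa},\hat u^\tau)\le\breve K_1-\breve K_2\abs{x}^2$ uniformly in $\tau$, so the maximizing measures $\nu_\tau$ have uniformly bounded second moments and form a relatively compact family; then, for each \emph{fixed} $\tilde f\in\mathbb C_0^2$, \eqref{eq:31} yields the uniform-in-$\tau$ linear growth bound \eqref{eq:35} --- here both cutoff terms live on $\{\abs{x}\le\tau\}$, which is where \eqref{eq:31} actually acts --- and this, with Fatou and dominated convergence along $\nu_\tau\to\tilde\nu$, gives $\limsup_\tau\inf_{f}\int\breve H(x;\hat\lambda,f,\hat u^\tau)\,\nu_\tau(dx)\le\inf_f\int\breve H(x;\hat\lambda,f,\hat u)\,\tilde\nu(dx)\le F(\hat\lambda)$ by \eqref{eq:19}. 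The argument is integrated against near-optimal measures rather than pointwise, and that is exactly what makes the weak condition \eqref{eq:31} sufficient; to repair your proof you would either have to adopt this duality--compactness route or strengthen the hypotheses to something of the \eqref{eq:39} type.
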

\begin{proof}
By \eqref{eq:18},
\begin{equation}
  \label{eq:20}
\sup_{\nu\in\mathcal{P}}  \inf_{f\in\mathbb C_0^2}
\int_{\R^l}\breve H(x;\hat\lambda,f,u^\tau)\nu(dx)=
\inf_{f\in\mathbb C_b^2}
 \sup_{x\in \R^l}\breve H(x;\hat\lambda,f,u^\tau)\,.
 \end{equation}
For function $f$ and $\tau>0$\,, we denote
$f(x)^\tau=f(x)\chi_{[0,\tau]}(\abs{x})$\,.
By  \eqref{eq:4},  \eqref{eq:8}, \eqref{eq:69}, and \eqref{eq:61}, 
\begin{multline}
  \label{eq:34}
  \breve H(x;\hat\lambda,f,\hat u^\tau)\\=
\frac{\hat\lambda}{2(1+\hat\lambda)}\,\bl(\norm{b(x)\sigma(x)^T\nabla\hat
  f(x)^\tau}^2_{c(x)^{-1}}
-\norm{(a(x)-r(x)\mathbf1)^\tau}^2
_{c(x)^{-1}}\br)
\\-\hat\lambda(r(x)-\alpha(x)
+\frac{1}{2}\,\abs{\beta(x)}^2)
-\frac{\hat\lambda}{2(1+\hat\lambda)}\,
\norm{\hat\lambda b(x)\beta(x)^\tau}^2_{c(x)^{-1}}
\\
-
\frac{\hat\lambda}{1+\hat\lambda}\,\Bl(\bl(\bl(a(x)-r(x)\mathbf1
\br)^\tau\br)^Tc(x)^{-1}b(x)
\hat\lambda\beta(x)
\\+
\bl(\bl(a(x)-r(x)\mathbf1
+\hat\lambda b(x)\beta(x)+b(x)\sigma(x)^T\nabla\hat
  f(x)\br)^\tau\br)^Tc(x)^{-1}b(x){\sigma(x)}^T\nabla f(x)\Br)\\
+\frac{1}{2}\,\abs{\hat\lambda\beta(x)+{\sigma(x)}^T\nabla f(x)}^2
+\nabla f(x)^T\,\theta(x)
+\frac{1}{2}\, \text{tr}\,\bl({\sigma(x)}{\sigma(x)}^T\nabla^2
f(x)\br)\,.
\end{multline}
As in the proof of Lemma \ref{le:sup-comp},
it follows that, under the hypotheses,  there exist
$\breve\kappa>0$\,, $\breve K_1>0$ and $\breve K_2>0$
such that
$ \breve H(x;\hat\lambda,f_{\breve\kappa},\hat u^\tau)\le 
\breve K_1-\breve K_2\abs{x}^2$\,, for all
$x\in\R^l$ and all $\tau>0$\,.
Consequently,  $ \inf_{f\in\mathbb C_0^2}\int_{\R^l}\breve
H(x;\hat\lambda,f,\hat u^\tau)\nu(dx)$ is a $\sup$--compact function
of $\nu\in\mathcal{P}$\,, so,
  the supremum over $\nu$ on the leftthand side of \eqref{eq:20} is attained at
some $\nu_\tau$\,. Moreover, if the $\limsup$ on the lefthand side of
\eqref{eq:22} is greater than $-\infty$\,, then
\begin{equation}
  \label{eq:25}
  \limsup_{\tau\to\infty}\int_{\R^l}\abs{x}^2\nu_\tau(dx)<\infty\,,
\end{equation}
so, the $\nu_\tau$ make up a 
relatively compact subset of $\mathcal{P}$\,.

If either $\hat\lambda=0$ or
\eqref{eq:31} holds, then,
  given $\tilde f\in\mathbb C_0^2$\,, by \eqref{eq:34},
there exist $\tilde C_1$ and $\tilde C_2$\,, such that, for all
$x\in\R^l$ and all $\tau>0$\,,
\begin{equation}
  \label{eq:35}\breve H(x;\hat\lambda,\tilde f,\hat u^\tau)\le
\tilde C_1\abs{x}+\tilde C_2\,.
\end{equation}
Assuming   that $\nu_\tau\to\tilde \nu$\,, we have,
by the convergence  $\breve H(x_\tau;\hat\lambda,\tilde f,\hat
u^\tau)
\to \breve H(\tilde x;\hat\lambda,\tilde f,\hat u)$ when $x_\tau\to\tilde x$\,, by
\eqref{eq:25}, \eqref{eq:35}, the definition of the topology on 
$\mathcal{P}$\,, Fatou's lemma, and the dominated convergence theorem, that
\begin{equation*}
  \limsup_{\tau\to\infty}
\int_{\R^l}\breve H(x;\hat\lambda,\tilde f,\hat u^\tau)\nu_\tau(dx)
\le\int_{\R^l}\breve H(x;\hat\lambda,\tilde f,\hat u)\tilde\nu(dx)\,,
\end{equation*}
so, on recalling \eqref{eq:19},
\begin{equation*}
    \limsup_{\tau\to\infty}
\inf_{f\in\mathbb C_0^2}
\int_{\R^l}\breve H(x;\hat\lambda,f,\hat u^\tau)\nu_\tau(dx)
\le \inf_{f\in\mathbb C_0^2}\int_{\R^l}\breve H(x;\hat\lambda,f,\hat
u)\tilde\nu(dx)\le F(\hat\lambda)\,.
\end{equation*}
\qed
\end{proof}
\section{Proofs of the main results }
\label{sec:proof-bounds}

Proof of Theorem \ref{the:bounds}.
Let $\tilde q<q$\,.
By the continuity of $J^{\text{s}}_q$\,, it suffices to prove that
 if $\tilde q$ is  close enough to $q$\,, then
\begin{equation*}
  \liminf_{t\to\infty}\frac{1}{t}\ln
\mathbf{P}(L^{\pi}_t< q)\ge -J^{\text{s}}_{\tilde q}\,.
\end{equation*}
If $\hat\lambda>0$\,, then $F'(\hat\lambda)= q$\,. Since the function
$F(\lambda)$ is strictly convex, $F'(\lambda)$ is a strictly
increasing function. By it being continuous according to Lemma
\ref{le:minmax} and the intermediate value theorem,
the equation $F'(\lambda)=\tilde q$ has  positive solution
$\tilde\lambda$ provided $\tilde q$ is close enough to $q$\,.
If $\hat\lambda=0$\,, then, by hypotheses, \eqref{eq:45}
  holds with $\Phi(x)=0$ and we let $\tilde\lambda=0$\,.
In either case, Lemma
\ref{le:saddle_3} yields the existence
of saddle point $(f^{\tilde \lambda}, m^{\tilde\lambda})$
of the function $G(\tilde\lambda,f,m)$ 
 such that
$(f^{\tilde \lambda}, m^{\tilde\lambda})\in
 (\mathbb C^1_\ell\cap\mathbb C^2)\times\hat{ \mathbb P}$\,. In
 addition, the density $ m^{\tilde\lambda}$ is continuously differentiable, 
positive and bounded.
By Remark \ref{re:saddle},
$  J^{\text{s}}_{\tilde q}= -\tilde\lambda\tilde q-
G(\tilde\lambda,f^{\tilde \lambda}, m^{\tilde\lambda})\,.$
 Therefore, one needs 
 to prove that
\begin{equation}
  \label{eq:48}
            \liminf_{t\to\infty}\frac{1}{t}\ln
\mathbf{P}(L^{\pi}_t< q)\ge 
\tilde\lambda \tilde q+
G(\tilde\lambda,f^{\tilde \lambda}, m^{\tilde\lambda})\,.
\end{equation}
Let $ u^{\tilde\lambda}(x)$ represent the point at which the supremum is attained
in \eqref{eq:62} for $\lambda=\tilde \lambda$\,,
$m= m^{\tilde\lambda}$\,, $f=f^{\tilde \lambda}$ so that, as in \eqref{eq:69} and \eqref{eq:16},
\begin{equation}
  \label{eq:33}
           u^{\tilde\lambda}(x)=\frac{1}{1+\tilde\lambda}\,c(x)^{-1}\bl(a(x)-r(x)\mathbf1
+\tilde\lambda b(x)\beta(x)+b(x)\sigma(x)^T\nabla f^{\tilde\lambda}(x)\br)\,.
\end{equation}
We prove \eqref{eq:48} by showing that
\begin{equation}
  \label{eq:57}
-\tilde \lambda \tilde q- 
 G(\tilde\lambda,f^{\tilde \lambda}, m^{\tilde\lambda})
=\frac{1}{2}\,\int_{\R^l}\abs{-\tilde\lambda
N( u^{\tilde\lambda}(x),x)+
\sigma(x)^T\nabla  f^{\tilde \lambda}(x)
}^2 m^{\tilde\lambda}(x)\,dx
\end{equation}
and that, for $\delta>0$\,,
  \begin{multline}
    \label{eq:90}
       \liminf_{t\to\infty}
\frac{1}{t}\,\ln     \mathbf{P}\bl(L^{\pi}_t< 
   \tilde q+3\delta\br)\\\ge
-\frac{1}{2}\,\int_{\R^l}\abs{-\tilde\lambda N( u^{\tilde\lambda}(x),x)+
\sigma(x)^T\nabla  f^{\tilde \lambda}(x)
}^2 m^{\tilde\lambda}(x)\,dx-2\delta\,.
  \end{multline}
\begin{itemize}
\item{\bf  Proof of  \eqref{eq:57}.}
  By Lemma \ref{le:saddle_3},
\begin{multline}
  \label{eq:21}
    \int_{\R^l}
\bl(  M( u^{\tilde\lambda}(x),x)
-\tilde\lambda\abs{N( u^{\tilde\lambda}(x),x)}^2+
\nabla f^{\tilde \lambda}(x)^T\sigma(x) N( u^{\tilde\lambda}(x),x)\br)
 m^{\tilde\lambda}(x)\,dx\\\le \tilde q\,,
\end{multline} 
with  equality prevailing,  provided $\tilde\lambda>0$\,.
Therefore, no matter $\tilde\lambda$\,, by \eqref{eq:11} and \eqref{eq:80},
\begin{multline}
  \label{eq:30}
    -\tilde \lambda \tilde q-G(\tilde\lambda,f^{\tilde \lambda},m)=
-\tilde \lambda \tilde q-\hat G(\tilde\lambda,\nabla f^{\tilde\lambda},m)\\=
-\tilde\lambda\int_{\R^l}
\bl(  M( u^{\tilde\lambda}(x),x)
-\tilde\lambda\abs{N( u^{\tilde\lambda}(x),x)}^2\\+
\nabla  f^{\tilde\lambda}(x)^T\sigma(x) N( u^{\tilde\lambda}(x),x)
\br)
 m^{\tilde\lambda}(x)\,dx\\
-\int_{\R^l}
\bl( -\tilde\lambda M( u^{\tilde\lambda}(x),x)
+\frac{1}{2}\,\tilde\lambda^2\abs{N( u^{\tilde\lambda}(x),x)}^2-\tilde\lambda\,
\nabla  f^{\tilde\lambda}(x)^T\sigma(x) N( u^{\tilde\lambda}(x),x)
\\+\nabla  f^{\tilde\lambda}(x)^T\theta(x)
+\frac{1}{2}\,\abs{\sigma(x)^T\nabla  f^{\tilde\lambda}(x)}^2
\\-\nabla  f^{\tilde\lambda}(x)^T\,\frac{\text{div}\bl(
\nabla  f^{\tilde\lambda}(x)^T{\sigma(x)}{\sigma(x)}^T m^{\tilde\lambda}(x)\br)}{2 m^{\tilde\lambda}(x)}\,
\br)
 m^{\tilde\lambda}(x)\,dx\\
=\int_{\R^l}
\frac{1}{2}\,\tilde\lambda^2\abs{N( u^{\tilde\lambda}(x),x)}^2
 m^{\tilde\lambda}(x)\,dx
\\-\int_{\R^l}\bl(\nabla f^{\tilde\lambda}(x)^T\theta(x)+\frac{1}{2}\,\abs{\sigma(x)^T\nabla  f^{\tilde\lambda}(x)}^2
\\-\nabla  f^{\tilde\lambda}(x)^T\,\frac{\text{div}\bl(
{\sigma(x)}{\sigma(x)}^T m^{\tilde\lambda}(x)\br)}{2 m^{\tilde\lambda}(x)}\,
\br)
 m^{\tilde\lambda}(x)\,dx\,.
\end{multline}
By \eqref{eq:27} in
 Lemma \ref{le:saddle_3}, the inclusion $ m^{\tilde\lambda}\in\hat{\mathbb
   P}$\,,
 and integration by parts, for $h\in\mathbb C_0^2$\,,
\begin{multline}
  \label{eq:71}
                     \int_{\R^l}
\nabla  h(x)^T\bl(-\tilde\lambda \sigma(x) N( u^{\tilde\lambda}(x),x)+\theta(x)
+\sigma(x)\sigma(x)^T\nabla f^{\tilde\lambda}(x)
\\-\frac{\text{div}\bl(
\sigma(x)\sigma(x)^T m^{\tilde\lambda}(x)\br)}{2 m^{\tilde\lambda}(x)}\,
\br)\,   m^{\tilde\lambda}(x)\,dx=0\,.
\end{multline}
The facts that $\abs{\nabla  f^{\tilde\lambda}(x)}$ grows at most linearly
 with $\abs{x}$\,, that $ u^{\tilde\lambda}(x)$ is a linear function of $\nabla
 f^{\tilde\lambda}(x)$ by \eqref{eq:33}, 
that $\int_{\R^l}\abs{x}^2\, m^{\tilde\lambda}(x)\,dx<\infty$\,, and that
$\int_{\R^l}\abs{\nabla  m^{\tilde\lambda}(x)}^2/ m^{\tilde\lambda}(x)\,dx<\infty$\,,
imply that the expression in parentheses under the integral in
\eqref{eq:71} represents a function which is an element of 
$\mathbb L^2(\R^l,\R^l, m^{\tilde\lambda}(x)\,dx)$\,. 
Therefore, the integral on the lefthand side extends to a linear
functional on $\mathbb L^{1,2}_0(\R^l,\R^l, m^{\tilde\lambda}(x)\,dx)$\,. 
Consequently, one can substitute $\nabla  f^{\tilde\lambda}(x)$ for $\nabla h(x)$ to obtain that
\begin{multline}
  \label{eq:47a}
                       \int_{\R^l}
\nabla f^{\tilde\lambda}(x)^T\bl(-\tilde\lambda \sigma(x) N( u^{\tilde\lambda}(x),x)+\theta(x)
+\sigma(x)\sigma(x)^T\nabla f^{\tilde\lambda}(x)
\\-\frac{\text{div}\bl(
\sigma(x)\sigma(x)^T m^{\tilde\lambda}(x)\br)}{2 m^{\tilde\lambda}(x)}\,
\br)\,   m^{\tilde\lambda}(x)\,dx=0\,.
\end{multline}
Substitution on the rightmost side of \eqref{eq:30} yields \eqref{eq:57}.
\item  {\bf Proof of \eqref{eq:90}.}
We apply a Girsanov change of a probability measure.  Let
$\tilde{ W}^t_s$ for $s\in[0,1]$  and measure $\tilde{\mathbf{P}}^{t}$ 
be defined by the respective equations
\begin{equation}
  \label{eq:34'}
\tilde{ W}^t_s=  W^t_s-\sqrt{t}\int_0^s(
-\tilde\lambda N( u^{\tilde\lambda}(X^t_{\tilde
s}),X^t_{\tilde s})
+\sigma (X^t_{\tilde s})^T\nabla   f^{\tilde\lambda}(X^t_{\tilde s}) )\,d\tilde s
\end{equation}
and 
\begin{multline}
  \label{eq:35'}
  \frac{d\tilde{\mathbf{P}}^{t}}{d\mathbf{P}}=
\exp\bl(\sqrt{t}\,\int_0^1
(-\tilde\lambda N( u^{\tilde\lambda}(X^t_s),X^t_s)+\sigma(X^t_s)^T\nabla
 f^{\tilde\lambda}(X^t_s)
)^T\, 
d W^t_s\\-
\frac{t}{2}\,\int_0^1\abs{
-\tilde\lambda N
( u^{\tilde\lambda}(X^t_s),X^t_s)+{\sigma(X^t_s)}^T\nabla  f^{\tilde\lambda}(X^t_s)}^2\,ds\br)\,.
\end{multline}
A multidimensional extension of
  Theorem 4.7 on p.137 in
 Liptser and Shiryayev \cite{LipShi77}, which is proved similarly, obtains  that, given $t>0$\,,
 there exists $\gamma'>0$ such that
$\sup_{s\le t}
\mathbf Ee^{\gamma'\abs{X_s}^2}<\infty$\,.  By Example 3 on pp.220,221 in
 Liptser and Shiryayev \cite{LipShi77}
 and  $\nabla  f^{\tilde\lambda}(x)$ obeying the linear growth 
condition, the expectation of the righthand side of
\eqref{eq:35'} with respect to $\mathbf P$ equals unity.
 Therefore, $\tilde{\mathbf{P}}^{t}$
 is a valid 
probability measure and 
the process $(\tilde{ W}^t_s,\,s\in[0,1])$ is a standard Wiener process
under $\tilde{\mathbf{P}}^{t}$\,, see Lemma 6.4 on p.216 in
Liptser and Shiryayev \cite{LipShi77} and Theorem 5.1 on p.191 in
Karatzas and Shreve \cite{KarShr88}.

As a stepping-stone to the proof of  \eqref{eq:90}, 
we establish the ergodic property that
if $\abs{g(x)}\le K(1+\abs{x}^2)$\,, for some $K>0$\,, then, for
arbitrary $\epsilon>0$\,,
\begin{equation}
  \label{eq:48'}
  \lim_{t\to\infty}
\tilde{\mathbf P}^{t}\Bl(\abs{\int_{\R^l}g(x)
\nu_t(dx)-\int_{\R^l}g(x) m^{\tilde\lambda}(x)\,dx}>\epsilon\Br)=0\,,
\end{equation}
where we let
\begin{equation}
  \label{eq:24}
  \nu_t(dx)=\mu_t([0,1],dx)\,.
\end{equation}
 By \eqref{eq:14} and \eqref{eq:34'},
\begin{multline*}
        dX^t_s=t\,\theta(X^t_s)\,ds+t\,
\sigma(X^t_s)\,\bl( -\tilde\lambda N( u^{\tilde\lambda}(X^t_s),X^t_s)+
\sigma(X^t_s)^T\nabla   f^{\tilde\lambda}(X^t_{ s})
 \br)
\,ds\\+\sqrt{t}\sigma(X^t_s) d\tilde{ W}^t_s\,.
\end{multline*}
Hence, the process $X=(X_s\,,s\ge0)=(X^t_{s/t}\,,s\ge0)$ satisfies the
equation
\begin{multline*}
        d X_s=
\theta(X_s)\,ds+
\sigma( X_s)\,\bl( -\tilde\lambda N( u^{\tilde\lambda}( X_s), X_s)+
\sigma( X_s)^T\nabla   f^{\tilde\lambda}( X_{ s})
 \br)
\,ds\\
+\sigma(X_s) d\breve W^t_s\,,
\end{multline*}
$(\breve W_s^t)$ being a standard Wiener process under $\tilde{\mathbf P}^{t}$\,.
By Theorem 10.1.3 on p.251 in Stroock and Varadhan
\cite{StrVar79} the distribution of $X$ under $\tilde{\mathbf P}^{t}$ is
specified uniquely. 
In addition, 
by Theorem 9.1.9 on p.220 and Lemma  9.2.2 on p.234 in Stroock and Varadhan
\cite{StrVar79}, $X$ is a regular Feller process. (See p.399 in
Kallenberg \cite{Kal02} for the definition.)
By \eqref{eq:47a}, $X$ has 
the invariant distribution
  $ m^{\tilde\lambda}(x)\,dx$\,, see, e.g., Theorem 1.5.13 in
Bogachev, Krylov, and R\"ockner \cite{BogKryRoc}.
The process $X$ is therefore positive Harris recurrent, see Theorem
20.17 on p.405 and Theorem 20.20 on p.408 %, and Theorem 20.21 on p.409
 in Kallenberg \cite{Kal02}. Since
$ m^{\tilde\lambda}\in\mathbb{\hat P}$\,, we have that
$\int_{\R^l}\abs{x}^2 m^{\tilde\lambda}(x)\,dx<\infty$\,, so
$\int_0^t g(X_s)\,ds$ is an integrable
 additive functional.
The limit in \eqref{eq:48'} now follows by
 Theorem 3.12 on p.397 and the discussion on p.398 in Revuz and Yor
 \cite{RevYor91}.

With the proof of \eqref{eq:48'} being out of the way,
we mount a final assault on \eqref{eq:90}.
By \eqref{eq:34'},
\begin{multline}
  \label{eq:6}
    \int_0^1M(\pi^t_s,X^t_s)\,ds+\frac{1}{\sqrt{t}}\,\int_0^1 
N(\pi^t_s,X^t_s)^T\,d W^t_s=
\int_0^1M(\pi^t_s,X^t_s)\,ds\\+
\int_0^1N(\pi^t_s,X^t_s)^T (
-\tilde\lambda N( u^{\tilde\lambda}(X^t_s
),X^t_s)
+\sigma (X^t_s)^T\nabla   f^{\tilde\lambda}(X^t_s) )\,ds\\+
\frac{1}{\sqrt{t}}\,\int_0^1 
N(\pi^t_s,X^t_s)^T\,d \tilde{ W}^t_s=\frac{1}{t}\,\ln\mathcal{E}_1^t
+
\int_0^1M( u^{\tilde\lambda}(X^t_s),X^t_s)\,ds\\+
\int_0^1N( u^{\tilde\lambda}(X^t_s),X^t_s)^T (
-\tilde\lambda N( u^{\tilde\lambda}(X^t_s
),X^t_s)
+\sigma (X^t_s)^T\nabla   f^{\tilde\lambda}(X^t_s) )\,ds\\+
\frac{1}{\sqrt{t}}\,\int_0^1 
N( u^{\tilde\lambda}(X^t_s),X^t_s)^T\,d \tilde{ W}^t_s\,,
\end{multline}
where
 $\mathcal{E}_s^t$ represents the stochastic exponential defined by
\begin{equation*}
  \mathcal{E}_s^t=\exp\bl(
\sqrt{t}\,\int_0^s(\pi^t_{\tilde s}- u^{\tilde\lambda}(X^t_{\tilde
  s}))^Tb(X^t_{\tilde s})
\,d\tilde{ W}^t_{\tilde s}\\-
\frac{t}{2}\,\int_0^s
\norm{\pi^t_{\tilde s}- u^{\tilde\lambda}(X^t_{\tilde s})}_{c(X^t_{\tilde s})}^2 d\tilde s\br)\,.
\end{equation*}
Since $
\tilde{\mathbf E}^{t}\mathcal{E}_1^t\le 1$\,, Markov's inequality yields
the convergence
\begin{equation}
  \label{eq:38}
  \lim_{t\to\infty}
\tilde{\mathbf P}^{t}\bl(  \frac{1}{t}\ln\mathcal{E}^t_1<\delta\br)=1\,.
\end{equation}

By  \eqref{eq:35'} and \eqref{eq:6}, 
\begin{multline}
  \label{eq:17}
\mathbf{P}\bl(L^{\pi}_t< 
\tilde    q+3\delta\br)  =
\tilde{\mathbf{E}}^{t}\chi_{
\displaystyle\{
\int_0^1
M(\pi_{s}^t,X^t_s)\,ds
+\frac{1}{\sqrt{t}}\,\int_0^1 N(\pi_{s}^t,X_s^t)^T\,dW_s^t}\\{< 
\tilde q+3\delta
\}}\\
\exp\bl(-\sqrt{t}\int_0^1(-\tilde\lambda
 N( u^{\tilde\lambda}(X^t_s),X^t_s)+\sigma( X^t_s)^T\nabla   f^{\tilde\lambda}(X^t_s)
)^T 
\,d\tilde{ W}^t_s
\\-\frac{t}{2}\,
\int_0^1\abs{-\tilde\lambda
 N( u^{\tilde\lambda}(X^t_s),X^t_s)+\sigma(X^t_s)^T\nabla   f^{\tilde\lambda}(X^t_s)}^2\,ds\br)\\\ge
\tilde{\mathbf{E}}^{t}\chi_{\Big\{\displaystyle
\frac{1}{t}\ln\mathcal{E}^t_1<\delta\Big\}}\,
\chi_{\Big\{\displaystyle
\frac{1}{\sqrt{t}}\,\abs{\int_0^1 
N( u^{\tilde\lambda}(X^t_s),X^t_s)^T\,d \tilde{ W}^t_s}<\delta\Big\}}\\
\chi_{\Big\{\displaystyle
\int_0^1\Bl(M( u^{\tilde\lambda}(X^t_s),X^t_s)+N( u^{\tilde\lambda}(X^t_s),X^t_s)^T (
-\tilde\lambda N( u^{\tilde\lambda}(X^t_s
),X^t_s)
}\\{+\sigma (X^t_s)^T\nabla   f^{\tilde\lambda}(X^t_s) )\Br)\,ds<\tilde q 
+\delta\Big\}}
\\
\chi_{\Big\{\displaystyle
\frac{1}{\sqrt t}\,\abs{\int_0^1(
-\tilde\lambda N( u^{\tilde\lambda}(X^t_s),X^t_s)+\sigma(X^t_s)^T\nabla   f^{\tilde\lambda}(X^t_s)
 )^T\,d\tilde{ W}^t_s}
< \delta\Big\}}\\
\chi_{\Big\{\displaystyle
\int_0^1\abs{-\tilde\lambda N( u^{\tilde\lambda}(X^t_s),X^t_s)
+\sigma(X^t_s)^T\nabla 
   f^{\tilde\lambda}(X^t_s)}^2\,ds}\\{-
\int_{\R^l}\abs{-\tilde\lambda N( u^{\tilde\lambda}(x),x)
+\sigma(x)^T\nabla   f^{\tilde\lambda}(x)}^2
 m^{\tilde\lambda}(x)\,dx<2\delta\Big\}}
\\\exp\bl(-2\delta t
-\frac{t}{2}\,\int_{\R^l}\abs{-\tilde\lambda N( u^{\tilde\lambda}(x),x)+
\sigma(x)^T\nabla   f^{\tilde\lambda}(x)
}^2 m^{\tilde\lambda}(x)\,dx\br)\,.
\end{multline}
We work with the terms on the righthand side of \eqref{eq:17} in order.
By \eqref{eq:42}, \eqref{eq:4}, \eqref{eq:8}, \eqref{eq:33},
 \eqref{eq:48'}, and by  $\nabla  f^{\tilde\lambda}(x)$ satisfying the linear growth
condition, 
\begin{multline}
  \label{eq:67}
    \lim_{t\to\infty}\tilde{\mathbf P}^{t}
\bl(\big|\int_0^1\abs{-\tilde\lambda N( u^{\tilde\lambda}(X^t_s),X^t_s)+
\sigma(X^t_s)^T\nabla 
    f^{\tilde\lambda}(X^t_s)}^2\,ds\\-
\int_{\R^l}\abs{-\tilde\lambda N( u^{\tilde\lambda}(x),x)+\sigma(x)^T\nabla   f^{\tilde\lambda}(x)}^2
 m^{\tilde\lambda}(x)\,dx\big|<2\delta\br)=1\,.
\end{multline}
Similarly, by \eqref{eq:48'} and \eqref{eq:21},
\begin{multline*}
  \lim_{t\to\infty}\tilde{\mathbf P}^{t}\bl(
\int_0^1\bl(M( u^{\tilde\lambda}(X^t_s),X^t_s)+N( u^{\tilde\lambda}(X^t_s),X^t_s)^T (
-\tilde\lambda N( u^{\tilde\lambda}(X^t_s
),X^t_s)
\\+\sigma (X^t_s)^T\nabla   f^{\tilde\lambda}(X^t_s) )\br)\,ds<
\tilde q+\delta\br)=1\,.
\end{multline*}

Since,  for $\epsilon>0$\,, by the L\'englart--Rebolledo inequality, see
Theorem 3 on p.66 in
Liptser and Shiryayev \cite{lipshir}, 
\begin{multline*}
 \tilde{\mathbf{P}}^{t}\bl(\abs{\frac{1}{\sqrt{t}}\,
\int_0^1(-\tilde\lambda N( u^{\tilde\lambda}(X^t_s),X^t_s)+
\sigma(x)^T\nabla   f^{\tilde\lambda}(X^t_s) )\,d\tilde{ W}^t_s}\ge
\delta\br)
\\\le 
\frac{\epsilon
}{\delta^2 }+\tilde{\mathbf{P}}^{t}\bl(
\int_0^1\abs{
-\tilde\lambda N( u^{\tilde\lambda}(X^t_s),X^t_s)+
\sigma(X^t_s)^T\nabla   f^{\tilde\lambda}(X^t_s)
}^2\,ds\ge\epsilon t\br)\,,
\end{multline*}
\eqref{eq:67} implies that
\begin{equation*}
\lim_{t\to\infty}
\tilde{\mathbf{P}}^{t}\bl(\frac{1}{\sqrt{t}}\,
\abs{\int_0^1(
-\tilde\lambda N( u^{\tilde\lambda}(X^t_s),X^t_s)+
\sigma(X^t_s)^T\nabla   f^{\tilde\lambda}(X^t_s)
 )\,d\tilde{ W}^t_s}< \delta\br)
=1\,. 
\end{equation*}
Similarly,
\begin{equation*}
\lim_{t\to\infty}
\tilde{\mathbf{P}}^{t}\bl(
\frac{1}{\sqrt{t}}\,\abs{\int_0^1 
N( u^{\tilde\lambda}(X^t_s),X^t_s)^T\,d \tilde{ W}^t_s}
<\delta\br)=1\,.
\end{equation*}
Letting $t\to\infty$ in \eqref{eq:17} and recalling \eqref{eq:38}
obtains \eqref{eq:90}.
\qed
\end{itemize}
\begin{remark}
  The change of measure in \eqref{eq:35'} is implicit in
 Puhalskii \cite{Puh16}. The idea of using a stochastic exponential
in order to
  ''absorb'' control, as in
  \eqref{eq:6}, is borrowed from Hata, Nagai, and Sheu \cite{Hat10}.
\end{remark}

Proof of Theorem \ref{the:risk-sens}.
We start with proving part 1. 
By Theorem \ref{the:bounds}, it suffices to prove that
\begin{equation}
  \label{eq:41}
  \limsup_{\tau\to\infty}\limsup_{t\to\infty}
\frac{1}{t}\,\ln\mathbf P(L^{\hat\pi^\tau}_t\le q)\le -J^{\text{s}}_q\,.
\end{equation}
Let $f\in\mathcal{A}_\kappa$\,.
In analogy with \eqref{eq:1a},
\begin{equation*}
  \mathbf{E}\exp\bl(-t\hat\lambda L^{\hat\pi,\tau}_t+f(X_t)-f(X_0)
-t\int_{\R^l}\breve H(x;\hat\lambda,f,u^\tau)\nu(dx)\br)\le1.
\end{equation*}
Thanks to Jensen's inequality,
\begin{equation*}
  \mathbf{E}
\chi_{\{L^{\hat\pi,\tau}_t\le q\}}\exp(f(X_t)-f(X_0))\le e^{t\hat\lambda q}
\exp\bl(t\sup_{x\in \R^l}\breve H(x;\hat\lambda,f,u^\tau)\br)\,.
\end{equation*}
By reverse H\"older's inequality,
for $\epsilon\in(0,1)$\,,
\begin{equation*}
  \mathbf{E}\chi_{\{L^{\hat\pi,\tau}_t\le q\}}\exp(f(X_t)-f(X_0))
\ge \mathbf P(L^{\hat\pi,\tau}_t\le q)^{1+\epsilon}\bl(\mathbf E
\exp(-\frac{1}{\epsilon}\,(f(X_t)-f(X_0)))\br)^{-\epsilon}\,.
\end{equation*}
Since $-k\le f(x)\le \kappa\abs{x}^2+k$\,, for some $k>0$\,,
if $\kappa<\epsilon\gamma$\,, then by \eqref{eq:37}
$  \lim_{t\to\infty}\bl(\mathbf E
\exp(-(f(X_t)-f(X_0))/\epsilon)\br)^{1/t}\le1\,,
$  which implies that
\begin{equation*}
    \limsup_{t\to\infty}
\,\frac{1+\epsilon}{t}\,\ln\mathbf{P}(L^{\hat\pi,\tau}_t\le q)\le
\hat\lambda q+
\inf_{f\in\mathcal{A}_\kappa}
\sup_{x\in \R^l}\breve H(x;\hat\lambda,f,u^\tau)\,.
\end{equation*}
In analogy with the proof of Lemma \ref{le:sup-comp},
\begin{multline}
  \label{eq:18}
 \inf_{f\in\mathcal{A}_\kappa}
\sup_{x\in \R^l}\breve H(x;\hat\lambda,f,u^\tau)=
  \inf_{f\in\mathcal{A}_\kappa}
\sup_{\nu\in\mathcal{P}}\int_{\R^l}\breve H(x;\hat\lambda,f,u^\tau)\nu(dx)\\=
\sup_{\nu\in\mathcal{P}}
  \inf_{f\in\mathcal{A}_\kappa}
\int_{\R^l}\breve H(x;\hat\lambda,f,u^\tau)\nu(dx)=
\sup_{\nu\in\mathcal{P}}  \inf_{f\in\mathbb C^2_b}
\int_{\R^l}\breve H(x;\hat\lambda,f,u^\tau)\nu(dx)\\\le
\inf_{f\in\mathbb C^2_b}\sup_{\nu\in\mathcal{P}  }
\int_{\R^l}\breve H(x;\hat\lambda,f,u^\tau)\nu(dx)
=\inf_{f\in\mathbb C^2_b}\sup_{x\in\R^l  }
\breve H(x;\hat\lambda,f,u^\tau)\,.\end{multline}
Hence,
\begin{equation*}
  \inf_{f\in\mathcal{A}_\kappa}
\sup_{x\in \R^l}\breve H(x;\hat\lambda,f,u^\tau)
=\inf_{f\in\mathbb C^2_b}\sup_{x\in\R^l}\breve H(x;\hat\lambda,f,u^\tau)\,,
\end{equation*}
so, by $\epsilon$ being arbitrarily small,
\begin{equation*}
    \limsup_{t\to\infty}
\,\frac{1}{t}\,\ln\mathbf{P}(L^{\hat\pi,\tau}_t\le q)\le
 \hat\lambda q+
\inf_{f\in\mathbb C^2_b}\sup_{x\in\R^l}\breve H(x;\hat\lambda,f,u^\tau)
\end{equation*}
and the required property follows by \eqref{eq:22}. 

We prove now part 2.
By Theorem \ref{the:bounds}, it suffices to prove that
\begin{equation}
  \label{eq:41a}
  \limsup_{t\to\infty}
\frac{1}{t}\,\ln\mathbf P(L^{\hat\pi}_t\le q)\le -J^{\text{s}}_q\,.
\end{equation}
We borrow from Koncz \cite{Kon87} and
 Nagai \cite{Nag03}.
Similarly to the proof of Theorem \ref{the:bounds}, we introduce the
change of measure 
\begin{multline*}
  \frac{d\hat{\mathbf{P}}}{d\mathbf{P}}\Big|_{\mathcal{F}_t}=
\exp\bl(\int_0^t
(-\hat\lambda N( \hat u(X_s),X_s)+\sigma(X_s)^T\nabla
 \hat f(X_s))^T\, 
d W_s\\-
\frac{1}{2}\,\int_0^t\abs{
-\hat\lambda N
( \hat u(X_s),X_s)+{\sigma(X_s)}^T\nabla\hat  f(X_s)}^2\,ds\br)\,.
\end{multline*}
Then 
$(\hat{ W}_t\,,t\ge0)$ is a standard Wiener process with respect to   
$\hat{\mathbf{P}}$\,, where
\begin{equation*}
\hat{ W}_t=  W_t-\int_0^t\bl(-\hat\lambda N(\hat u(X_s),X_s)+
{\sigma(X_s)}^T\nabla  \hat f(X_s)\br)\,ds\,.
\end{equation*}
By \eqref{eq:14} and It\^o's lemma, 
\begin{equation*}
  dX_t=\bl(\theta (X_t)+\sigma(X_t)(-\hat\lambda N(\hat u(X_s),X_s)+
{\sigma(X_s)}^T\nabla  \hat f(X_s))\br)\,dt+\sigma(X_t)\,d\hat W_t
\end{equation*}
and
\begin{multline*}
  d\hat f(X_t)=\bl(
\nabla \hat f(X_t)^T (\theta (X_t)+\sigma(X_t)(-\hat\lambda N(\hat u(X_t),X_t)+
{\sigma(X_t)}^T\nabla  \hat f(X_t))\\+\frac{1}{2}\,
\text{tr}(\sigma(X_t)\sigma(X_t)^T\nabla^2\hat f(X_t))\br)\,dt
+\nabla \hat f(X_t)^T\sigma(X_t)\,d\hat W_t\,.
\end{multline*}
By \eqref{eq:1},
\eqref{eq:4}, \eqref{eq:8}, and \eqref{eq:103'},
\begin{equation}
  \label{eq:40}
  \mathbf E e^{-t\hat\lambda L^{\hat \pi}_t}=
e^{tF(\hat\lambda)}\mathbf{ \hat E}e^{\hat f(X_0)-\hat f(X_t)}\,.
\end{equation}
By  It\^o's lemma,  \eqref{eq:103'}, and \eqref{eq:61},
\begin{multline*}
  e^{\hat f(X_0)-\hat f(X_t)}=1+
\int_0^t e^{\hat f(X_0)-\hat f(X_s)}\bl(-\nabla \hat f(X_s)^T\theta (X_s)\\
+\hat\lambda \nabla\hat  f(X_s)^T{\sigma(X_s)}N( \hat u(X_s),X_s)
+\frac{1}{2}\,\abs{\sigma(X_s)^T\nabla\hat  f(X_s)}^2
\\-\frac{1}{2}\, \text{tr}\,(\sigma(X_s)\sigma(X_s)^T
\,\nabla^2  \hat f(X_s)\br)\,ds
-\int_0^te^{\hat f(X_0)-\hat f(X_s)} \nabla\hat  f(X_s)^T\sigma(X_s)\,d\hat W_s
%\\=
%1+
%\int_0^t e^{\hat f(X_s)-\hat f(X_0)}\bl(F(\hat\lambda)
%-\hat\lambda M(\hat u(X_s),X_s)
%+\frac{1}{2}\,\hat\lambda^2
%\abs{N(\hat u(X_s),X_s)}^2
%\br)\,ds\\+
%\int_0^te^{\hat f(X_s)-\hat f(X_0)} \nabla\hat  f(X_s)^T\sigma(X_s)\,d\hat W_s\\
\\=1+
\int_0^t e^{\hat f(X_0)-\hat f(X_s)}\bl(
\breve H(X_s;\lambda, \mathbf{0},\hat u)-F(\hat\lambda)\br)\,ds\\-
\int_0^te^{\hat f(X_0)-\hat f(X_s)} \nabla\hat  f(X_s)^T\sigma(X_s)\,d\hat W_s\,,
\end{multline*}
where $\mathbf 0$ stands for the zero function.
Let 
\begin{equation*}
\hat\tau_R=\inf\{t\ge0:\, \abs{X_t}>R\}\,,
  \end{equation*}
 where $R>0$\,.
Since $\bl(\int_0^{t\wedge\hat \tau_R}e^{\hat f(X_0)-\hat f(X_s)}
\nabla\hat  f(X_s)^T\sigma(X_s)\,d\hat W_s\,,t\ge0\br)$ 
is a martingale  with respect to $\hat{ \mathbf
  P}$\,, 
\begin{equation*}
  \hat{\mathbf E} e^{\hat f(X_0)-\hat f(X_{t\wedge \hat\tau_R})}
=  1
+\hat{\mathbf E}
\int_0^{t\wedge\hat\tau_R} e^{\hat f(X_0)-\hat f(X_s)}\bl(
\breve H(X_s;\lambda, \mathbf 0,\hat u)-F(\hat\lambda)
\br)\,ds\,.
\end{equation*}
By \eqref{eq:34} (with $\tau=\infty$) and \eqref{eq:39}, there exists $K>0$ such that
$\breve H(x;\lambda, \mathbf 0,\hat u)-F(\hat\lambda)<0$ if $\abs{x}>K$\,.
Therefore,
\begin{equation*}
  \hat{\mathbf E} e^{\hat f(X_0)-\hat f(X_{t\wedge \hat\tau_R})}
\le  1
+\sup_{\abs{x}\le K}e^{2\abs{\hat f(x)}}
\sup_{\abs{x}\le K}\bl(\abs{
\breve H(x;\lambda, \mathbf 0,\hat u)-F(\hat\lambda)}\br)t\,,
\end{equation*}
so, by Fatou's lemma, 
\begin{equation*}
  \hat{\mathbf E} e^{\hat f(X_{0})-\hat f(X_t)}\le1
+\sup_{\abs{x}\le K}e^{2\abs{\hat f(x)}}
\sup_{\abs{x}\le K}\bl(\abs{
\breve H(x;\lambda, \mathbf 0,\hat u)-F(\hat\lambda)}\br)t\,,
\end{equation*}
which implies, by \eqref{eq:40}, that
\begin{equation*}
  \limsup_{t\to\infty}\frac{1}{t}\,\ln
\mathbf E e^{-t\hat\lambda L^{\hat\pi}_t}\le F(\hat\lambda)\,.
\end{equation*}
Hence,
\begin{equation*}
  \limsup_{t\to\infty}\frac{1}{t}\,\ln \mathbf P(L^{\hat\pi}_t\le q)
\le \hat\lambda q+ \limsup_{t\to\infty}\frac{1}{t}\,\ln
\mathbf E e^{-t\hat\lambda L^{\hat\pi}_t}\le
\hat\lambda q+ F(\hat\lambda)\,.
\end{equation*}
\qed

\bibliographystyle{spmpsci}
\def\cprime{$'$} \def\cprime{$'$} \def\cprime{$'$} \def\cprime{$'$}
  \def\cprime{$'$} \def\polhk#1{\setbox0=\hbox{#1}{\ooalign{\hidewidth
  \lower1.5ex\hbox{`}\hidewidth\crcr\unhbox0}}} \def\cprime{$'$}
  \def\cprime{$'$} \def\cprime{$'$} \def\cprime{$'$} \def\cprime{$'$}
  \def\cprime{$'$}

%\bibliography{large,idemp,puh,stoch,finance,sprav,optim,pde,stutzer}
\end{document}